\title{A categorification of cyclotomic rings}
\date{\today}
\author{Robert Laugwitz}
\address{School of Mathematical Sciences,
University of Nottingham, Nottingham, NG7 2RD, UK}
\email{robert.laugwitz@nottingham.ac.uk}
\urladdr{https://www.nottingham.ac.uk/mathematics/people/robert.laugwitz}
\author{You Qi}
\address{Department of Mathematics,
University of Virginia, 141 Cabell Drive, Kerchof Hall, Charlottesville, VA 22904, USA}
\email{yq2dw@virginia.edu}
\urladdr{https://you-qi2121.github.io/mypage/}
\def\imod#1{\allowbreak\mkern10mu({\operator@font mod}\,\,#1)}
\newcommand{\isomorph}{\stackrel{\sim}{\longrightarrow}}
\newcommand{\op}[1]{\operatorname{#1}}
\newcommand{\oop}{\operatorname{op}}
\newcommand{\un}[1]{\underline{#1}}
\newcommand{\lmod}[1]{#1\text{-}\mathbf{gmod}}
\newcommand{\lomod}[1]{#1\text{-}\mathbf{mod}}
\newcommand{\lratmod}[1]{#1\text{-}\mathbf{rmod}}
\newcommand{\slmod}[1]{#1\text{-}\mathbf{g}\underline{\mathbf{mod}}}
\newcommand{\lcomod}[1]{#1\text{-}\mathbf{comod}}
\newcommand{\Ext}{\operatorname{Ext}}
\newcommand{\Hom}{\operatorname{Hom}}
\newcommand{\ide}{\operatorname{Id}}
\newcommand{\one}{\mathbf{1}}
\newcommand{\gvect}{\mathbf{gvec}}
\newcommand{\qvect}{\mathbf{gvec}_q}
\providecommand{\fr}[1]{\mathfrak{#1}}
\providecommand{\op}[1]{\operatorname{#1}}
\newcommand{\mZ}{\mathbb{Z}}
\newcommand{\mN}{\mathbb{N}}
\newcommand{\bfI}{\mathbf{I}}
\newcommand{\cB}{\mathcal{B}}
\newcommand{\cO}{\mathcal{O}}
\newcommand{\cP}{\mathcal{P}}
\def\d{{\operatorname{d}}}
\def\l{\ell}
\def\Z{{\mathbb{Z}}}
\newcommand{\shift}[1]{\{ #1 \}}
\newcommand{\Gro}[1]{\left[#1\right]}
\newtheoremstyle{mystyle}% name of the style to be used
  {0.5cm}                   %Space above
  {0.5cm}                   %Space below
  {\normalfont}           %Body font
  {}                      %Indent amount (empty = no indent,
\newtheoremstyle{defstyle}% name of the style to be used
  {0.5cm}                   %Space above
  {0.5cm}                   %Space below
  {\normalfont}           %Body font
  {}     %Indent amount (empty = no indent,
\numberwithin{equation}{section}
\newtheorem{theorem}{Theorem}[section]
\newtheorem{proposition}[theorem]{Proposition}
\newtheorem{corollary}[theorem]{Corollary}
\newtheorem{lemma}[theorem]{Lemma}
\newtheorem{theorem*}{Theorem}
\theoremstyle{definition}
\newtheorem{definition}[theorem]{Definition}
\newtheorem{notation}[theorem]{Notation}
\newtheorem{remark}[theorem]{Remark}
\newtheorem{example}[theorem]{Example}
\theoremstyle{remark}
\renewcommand{\sectionmark}[1]		% Schriftform fr \section = Kaptlchen
	{
	\markboth{\small\it \thesection{} #1}{}
	}
\begin{document}

\subjclass{(2020) Primary 18G90; Secondary 18G80, 16T05}
\keywords{Categorification, Hopfological algebra, cyclotomic rings, Hopf algebras, stable category}

\begin{abstract}
For any natural number $n\geq 2$, we construct a triangulated monoidal category whose Grothendieck ring is isomorphic to the ring  of cyclotomic integers $\mathbb{O}_n$. This construction provides an affirmative resolution to a problem raised by Khovanov in 2005.
\end{abstract}
\maketitle
\date{\today}
%\tableofcontents

%%%%%%%%%%%%%%%%%%%%%%%%%%%%%%%%%%%%%%%%%%%%%%%%%%%%%%%%%%%%%%%%%%%%%%%%%%%%%%%%%%%%%%%%%%%%%%%%%%%%%%%%%%

\section{Introduction}
\subsection{Backround} The seminal paper of Louis Crane and Igor B.~Frenkel \cite{CF} proposes that one should lift three-dimensional topological quantum field theories defined at a primitive $n$th root of unity to four-dimensional theories. The lifting process is usually referred to, in mathematics, as \emph{categorification}. One aim is to replace algebras appearing in the construction of the three-dimensional topological quantum field theories by categories, such that the original algebras can be recovered by passing to the Grothendieck group. However, a foundational obstacle to the program is the lack of a monoidal category that categorifies the cyclotomic ring of integers $\mathbb{O}_n$ at a primitive $n$th root of unity.

As an initial breakthrough, Khovanov \cite{Kh} observed that, when $n=p$ is a prime number, the graded Hopf algebra $H_p=\Bbbk[\d]/(\d^p)$ ($\mathrm{deg}(\d)=1$) over a field $\Bbbk$ of characteristic $p$ may be utilized to categorify $\mathbb{O}_p$. The basic idea is as follows. Inside the category of graded $H_p$-modules, the projective modules, which coincide with the injectives since $H_p$ is graded Frobenius (\cite{LS}), have their graded Euler characteristic equal to a multiple of that of the rank-one free module 
\begin{equation}
[H_p]=1+v+\cdots +v^{p-1}=\Phi_p(v).
\end{equation}
Here $\Phi_n(q)$ stands for the $n$th cyclotomic polynomial. Systematically killing projective-injective objects from $H_p$-modules results in a  triangulated monoidal category $\slmod{H_p}$ whose Gro\-then\-dieck ring is isomorphic to 
\begin{equation}
K_0(\slmod{H_p})= \dfrac{\mZ[v,v^{-1}]}{(\Phi_p(v))}\cong \mathbb{O}_p.
\end{equation}
The tensor triangulated category $\slmod{H_p}$ bears significant similarities with the usual homotopy category of abelian groups, and is thus also referred to as the \emph{homotopy category of $p$-complexes}.

The study of $\slmod{H_p}$ and algebra objects in these categories has been further developed in \cite{Qi}. The theory has since been applied to categorify various root-of-unity forms of quantum groups. We refer the reader to \cite{QS} for a brief summary and special phenomena at a $p$th root of unity.

\subsection{Outline of the construction} In this paper, we construct a triangulated monoidal category $\cO_n$ whose Grothendieck group is isomorphic to the ring of cyclotomic integers $\mathbb{O}_n$. We work in any characteristic, including characteristic zero, as long as the ground field contains a primitive $N$th root of unity, where $N=n^2/m$ and $m$ is the radical of $n$, the product of the distinct prime factors of $n$.  The construction is motivated from pioneering works of Kapranov \cite{Kap} and Sarkaria \cite{Sar} on $n$-complexes. When $n=p^a$ is a prime power, our work is equivalent to a graded version of $p$-complexes. With this approach, we remove the restriction on $n$ being a prime number.

Let us outline the construction. When $n=p^{a}$ is a prime power, one sees that $p$-complexes, up to homotopy, categorify $\mathbb{O}_{p^a}$ when the $p$-differential has degree $p^{a-1}$. The characteristic zero lift of the Hopf algebra $\Bbbk[\d]/(\d^p)$, which controls Kapranov--Sarkaria's $p$-complexes, is a Hopf algebra object in the braided monoidal category of $q$-graded vector spaces, where $q$ is a primitive $p^{2a-1}$th root of unity. 

Now suppose $n\geq 2$ is a general integer. Let $n=p_1^{a_1}\cdots p_t^{a_t}$ be the prime decomposition of $n$ and fix $q$, a primitive $N$th root of unity. By the one-factor case, it is natural to consider the Hopf algebra object $H_n$, in $q$-graded vector spaces, generated by commuting differentials $\d_1,\dots , \d_t$ subject to $\d_i^{p_i}=0$, $i=1,\dots, t$. The algebra $H_n$ is graded Frobenius, and thus has associated with it a well-behaved tensor triangulated stable category $\slmod{H_n}$. However, the Grothendieck ring of $\slmod{H_n}$ is defined by setting the character of the free module $H_n$ equal to zero. The last relation is usually larger than the cyclotomic relation $\Phi_n(v)=0$ which we would like to impose.

To obtain the correct relations in the Grothendieck ring, we use the elementary fact that $\Phi_n(v)$ occurs as the greatest common divisor of some prime power cyclotomic relations (Lemma \ref{cyclotomiclemma}). On the categorical level, the ideal generated by the greatest common divisor is categorified by a ``categorical ideal'', i.e, a thick triangulated subcategory $\underline{\bfI}$ inside $\slmod{H_n}$ which is closed under tensor product actions by $\slmod{H_n}$. Verdier localization at the categorical ideal $\underline{\bfI}$ yields the desired category $\mathcal{O}_n$, whose Grothendieck ring is defined by the desired relations and is isomorphic to $\mathbb{O}_n$.

Before moving on, let us also point out some connection to previous work. Furthering Kapranov and Sakaria, there is a significant amount of study on $n$-complexes in the literature. See, for instance, the lectures notes of Dubois-Violette \cite{DV} and the references therein. In \cite{Bic}, Bichon considers a Hopf algebra $A(q)=\Bbbk[x]/(x^n)\rtimes \Bbbk \mZ $ for which there is a monoidal equivalence between the category of $n$-complexes and $\lcomod{A(q)}$. One may similarly define a $\Z/n\Z$-graded version of Bichon's Hopf algebra, which resembles the classical Taft algebra. Recently, Mirmohades \cite{Mir} has introduced a tensor triangulated category arising from a suitable quotient of a tensor product of two Taft algebras. This category categorifies a primitive root of unity whose order is the product of two distinct odd primes. Our current work and \cites{Bic, Mir} are both unified under the frame of hopfological algebra \cite{Qi}.
 
 \subsection{Summary of contents} We now briefly describe the structure of the paper and summarize the contents of each section.
  
 In Section \ref{stablesect}, we review the construction of stable module categories in the particular case of finite-dimensional Hopf algebras. The Frobenius structure and the existence of an inner hom space allows an explicit identification of morphism spaces in the stable module categories.  

In Section \ref{sec-Hn}, we introduce the main object of study, a finite-dimensional braided Hopf algebra $H_n$, depending on a given natural number $n$. The category $\lmod{H_n}$ has a tensor product depending on a certain root of unity $q$. The braided Hopf algebra $H_n$ is primitively generated by certain commuting $p_k$-differentials $\d_k$, for $k=1,\ldots, t$. Using the Radford--Majid biproduct (or \emph{bosonization}, see \cites{Rad, Maj1}) of the differentials by the group algebra of a finite cyclic group, one obtains a related Hopf algebra, for which graded $H_n$-modules correspond to rational graded modules.
We also point out that $\lmod{H_n}$ has the structure of a spherical monoidal category in the sense of Barrett-Westbury \cite{BW}.

Next, we proceed, in Section \ref{sec-tensorideal}, to define a tensor triangulated ideal $\underline{\bfI}$ (Definition \ref{defnI}) in the (stable) module category of $H_n$. Upon factoring out the ideal by localization, we show, in Section \ref{sec-catfn}, that the quotient category has the desired Grothendieck ring $\mathbb{O}_n$ (Theorem \ref{thm-main}). The reason to introduce this ideal is as follows. On the Grothendieck ring level, we would like the objects of the ideal to have characters satisfying  cyclotomic relations dividing $q^n-1$ that are of lower order than the primitive condition $\Phi_n(q)=0$. Systematically killing these objects by taking a Verdier quotient in the stable category $\slmod{H_n}$ gives the lower order relations in the Grothendieck ring.
An example of such an object in the ideal is an $n$-complex which is freely generated by all but one of the differentials while the remaining differential acts by zero.
This objects has self-extensions and it is thus natural to require a filtration condition on the modules on the abelian level giving a triangulated tensor ideal $\bfI_k$. The bulk of the work in Section~\ref{sec-tensorideal} is devoted to showing that after passing to the stable category of $H_n$-modules, the ideals $\bfI_k$ are orthogonal so that their sum $\bfI$ is indeed closed under tensor product, extensions and direct summands. Now, the standard machinery of Verdier localization (quotient) can be used to obtain a triangulated quotient category $\mathcal{O}_n$ with a tensor product structure. Finally, in Theorem \ref{thm-main}, we prove that the Grothendieck ring of $\mathcal{O}_n$ is isomorphic to $\mathbb{O}_n$.

\subsection{Comparison and further directions} To conclude this introduction, let us make some comparison between our construction and the works \cites{Mir, Bic }, as well as indicate some further directions.

 We employ multiple nilpotent differentials $\d_1,\ldots, \d_t$ depending on the prime factors of $n$, in contrast to \cite{Mir}, thus getting rid of the restriction on $n$ having to be the product of two odd primes. In contrast to \cite{Mir} we only employ a single $\mZ$-grading rather than a bigrading. This requires us to use a filtration condition on modules in the ideal $\bfI_k$.

A negative result from \cite{Bic}*{Proposition 5} is the non-existence of a quasi-triangular structure on the Hopf algebra $A(q)$ describing $n$-complexes. In our setup, we show that instead of a quasi-triangular structure, there exist weak replacements given by functorial isomorphisms $V\otimes_q W\cong W\otimes_{q^{-1}} V$. For $n=2$, these satisfy the axioms of a braiding, but for other values of $n$ no analogue of the braiding axioms could be identified. We plan to explore this structure in subsequent works.

For further investigation, we would like to construct module categories over $\mathcal{O}_n$, developing triangulated analogues parallel to the  abelian theory of \cites{EGNO}. We will also seek interesting algebra objects in $\mathcal{O}_n$, in a similar way as done in \cites{KQ,EQ1} over the homotopy category of $p$-complexes. The Grothendieck groups of such algebra objects would then give rise to interesting modules over $\mathbb{O}_n$.
It would also be an interesting problem to combine the recent categorification of fractional integers due to Khovanov--Tian \cite{KT} in order to categorify the algebra $\mathbb{O}_n\left\lbrack\frac{1}{n}\right\rbrack$, over which the extended $3$-dimensional Witten--Reshetikhin--Turaev  TQFT lives.

\subsection{Acknowledgments} The authors would like to thank Igor Frenkel for his encouragements when the project started. They would also like to thank Mikhail Khovanov, Peter McNamara, Vanessa Miemietz, Joshua Sussan and Geordie Williamson for their interest in this work and helpful suggestions. Y.~Q.~is partially supported by the NSF grant DMS-1763328.

The authors further thank two anonymous referees for careful reading of this manuscript and helpful comments, in particular, pointing out a gap in a previous draft of this manuscript.

\section{The stable category}\label{stablesect}
\subsection{Notation}\label{notation-sect}
We start by fixing some conventions concerning $\mZ$-graded vector spaces over a ground field $\Bbbk$. 
Let us denote the category of finite-dimensional $\mZ$-graded vector spaces by $\gvect$. 

Let $M=\oplus_{i\in \mZ} M^i$ and $N=\oplus_{j\in \mZ}N^j$ be $\mZ$-graded vector spaces over  $\Bbbk$. We set $M\otimes_{\Bbbk} N$, or simply $M\otimes N$, to be the graded vector space
\[
M\otimes N:=\bigoplus_{k\in \mZ}(M\otimes N)^k,\quad \quad  (M\otimes N)^k:=\bigoplus_{i+j=k}M^i\otimes N^j.
\]
For any integer $k\in \Z$, we denote by $M\shift{k}$  the graded vector space $M$ with its grading shifted down by $k$:
$
(M\shift{k})^i=M^{i+k}.
$
The morphisms space $\Hom_\Bbbk^0(M,N)$ consists of homogeneous $\Bbbk$-linear maps from $M$ to $N$:
\[
\Hom_\Bbbk^0(M,N):=\left\lbrace f:M\longrightarrow N\middle| f(M^i)\subseteq N^i\right\rbrace.
\]
Writing $\Hom^i_\Bbbk (M,N):=\Hom^0_\Bbbk(M,N\shift{i})=\left\lbrace f:M\longrightarrow N\middle| f(M^j)\subseteq N^{i+j}\right\rbrace$, we set the graded hom space to be
\[
\Hom^\bullet_\Bbbk(M,N):=\bigoplus_{i\in \mZ} \Hom^i_\Bbbk (M,N).
\]
If no confusion can be caused, we will simplify $\Hom^\bullet_\Bbbk(M,N)$ to $\Hom^\bullet(M,N)$. 
A special case is the graded dual $M^*=\Hom^\bullet(M,\Bbbk)$.

Given three $\mZ$-graded vector spaces $M$, $L$ and $K$, the following easily proven tensor-hom adjunction will be used. There are isomorphisms of graded vector spaces, natural in $M,L, K$:
\begin{equation}\label{eqn-tensor-hom-vec-version}
\Phi\colon \Hom^\bullet(M\otimes L,K)\isomorph \Hom^\bullet(L,\Hom^\bullet(M,K)), \quad \quad \Phi(f)(l)(m):=f(m\otimes l),
\end{equation}
where $f\in \Hom^\bullet(M\otimes L, K)$, $m\in M$ and $l\in L$ are arbitrary elements.

We will also require (unbalanced) $q$-integers. In particular, for a formal variable $\nu$, we define polynomials 
\begin{align}
[n]_\nu=\frac{1-\nu^{n}}{1-\nu}=1+\nu+\ldots+\nu^{n-1}, && {n\brack k}_\nu=\frac{[n]_\nu!}{[k]_\nu![n-k]_\nu!}.
\end{align}
Given $q\in \Bbbk$, we set $[n]_q$ to be the value of $[n]_\nu$ evaluated at $\nu=q$.
For a $\mZ$-graded vector space $M$, denote by $$\dim_{\nu}(M)=\sum_{i\in \mZ} \dim_{\Bbbk}(M^i)\nu^i$$ the graded dimension of $M$. We abbreviate, for $f(\nu)=\sum_{i\in \mZ}{f_i\nu^i}\in \mN[\nu,\nu^{-1}]$, $$M^{f(\nu)}=\bigoplus_{i\in \mZ}M\shift{i}^{\oplus f_i}.$$

\subsection{Stable module categories}\label{stable-sect}
Let $H$ be a $\mZ$-graded self-injective algebra over a field $\Bbbk$. We denote by $\lmod{H}$ the category of finite-dimensional $\mZ$-graded modules over $H$, with morphisms of degree zero. For ease of notation, we will drop mentioning ``graded'' in what follows if no confusion can arise.

Note that, as $H$ is self-injective, a graded $H$-module is injective if and only if it is projective.
The (graded) stable category of finite-dimensional $H$-modules, denoted by $\slmod{H}$, is the categorical quotient of the category $\lmod{H}$ by the class of (graded) projective-injective objects. More precisely, recall that a degree-zero morphism is \emph{(homogeneous) null-homotopic} if it factors through a projective-injective $H$-module. For any two $H$-modules $M,N\in \lmod{H}$, let us denote the space of null-homotopic morphisms in $\lmod{H}$ by $\bfI_H^0(M,N)$. It is readily seen that, the collection of $\bfI_H^0(M,N)$'s ranging over all $M,N\in \lmod{H}$ constitute an ideal in $\lmod{H}$. Then $\slmod{H}$ has the same objects as $\lmod{H}$, and the morphism space between two objects $M,N\in \lmod{H}$ is by definition the quotient
\begin{equation}
\Hom_{\slmod{H}}(M,N):=\dfrac{\Hom_{\lmod{H}}^0(M,N)}{\bfI_H^0(M,N)}.
\end{equation}

It is a classical theorem that $\slmod{H}$ is triangulated, see \cite{Hap2}*{Theorem~9.4} and \cite{Hel}. The shift functor $[1]:\slmod{H}\longrightarrow\slmod{H}$ is defined as follows. For any $M\in \lmod{H}$, choose an injective envelope $I_M$ for $M$ in $\lmod{H}$ and let $K_M$ be the cokernel of the embedding map $\rho_M$:
\[
0\longrightarrow M\stackrel{\rho_M}{\longrightarrow} I_M\longrightarrow K_M\longrightarrow 0.
\]
Then $M[1]:= K_M$. The inverse functor $[-1]$ can be defined similarly by taking a projective cover and the corresponding kernel of the canonical epimorphism.

Let us also recall how distinguished triangles are defined in the stable category. Let $f\colon M\longrightarrow N$ be a morphism in $\lmod{H}$. Consider the diagram
\begin{equation}
\begin{gathered}
\xymatrix{
0 \ar[r] & M \ar[r]^{\rho_M} \ar[d]_f & I_M\ar[r]\ar[d] & M[1] \ar[r] \ar@{=}[d]& 0\\
0 \ar[r] & N \ar[r]^-u & C_f \ar[r]^-v & M[1]\ar[r] & 0
} 
\end{gathered} \ ,
\end{equation}
where the left-hand square is a push-out. One declares 
\begin{equation}\label{eqn-triangle}
M\stackrel{f}{\longrightarrow} N \stackrel{u}{\longrightarrow} C_f \stackrel{v}{\longrightarrow} M[1]
\end{equation}
to be a \emph{standard distinguished triangle}. Then any triangle in $\slmod{H}$ isomorphic to a standard one is called a \emph{distinguished triangle}.

We refer the reader to Happel's book \cite{Hap} for more details on this fundamental construction.

As for graded vector spaces, we set 
$$\Hom_{\lmod{H}}^i(M,N):=\Hom_{\lmod{H}}(M,N\shift{i}), \quad \quad \bfI_H^i(M,N):=\bfI^0_H(M,N\shift{i}),$$
and collect together
\begin{equation}
\Hom_{\slmod{H}}^\bullet(M,N):=\bigoplus_{i\in \mZ} \Hom_{\slmod{H}}(M,N\shift{i})=\bigoplus_{i\in \mZ}\left(\dfrac{\Hom_{\lmod{H}}^i(M,N)}{\bfI_H^i(M,N)}\right).
\end{equation}
Notice that this is different from the ext-space, which is denoted
\begin{equation}
\Ext^\bullet_{\slmod{H}}(M,N):=\bigoplus_{j\in \mZ}\Hom_{\slmod{H}}(M,N[j]).
\end{equation}

\subsection{Stable categories for finite-dimensional Hopf algebras}\label{stable-hopf-sect}
Now suppose $H$ is a finite-di\-men\-sion\-al graded Hopf algebra over $\Bbbk$. Our goal in this section is to provide a more explicit characterization of the morphism spaces in the graded stable module category $\slmod{H}$. The exposition here is a simplified version of the constructions in \cite{Qi}*{Section~5}.

Recall that a graded Hopf algebra $H$ is equipped with certain homogeneous structural maps called the \emph{counit} $\epsilon:H\longrightarrow \Bbbk$, the \emph{comultiplication} $\Delta:H\longrightarrow H\otimes H$, and the (invertible) \emph{antipode} $S:H\longrightarrow H^{\mathrm{op}}$, satisfying certain compatibility axioms with the algebra structure of $H$ (see, for instance, \cite{LS}). We will use adapted Sweedler's notation that
\begin{equation}
\Delta(h):=\sum_h h_1\otimes h_2.
\end{equation}
If $M$ and $N$ are $H$-modules, then $H$ acts on $M\otimes N$ by, for any $x\in M$, $y\in N$ and $h\in H$, 
\begin{equation}
h\cdot (x\otimes y)= \sum_h h_1x\otimes h_2y.
\end{equation}
We equip $M^*=\Hom^\bullet (M,\Bbbk)$ with the dual $H$-module structure
\begin{equation}
(h\cdot f)(x):=f(S^{-1}(h)x),
\end{equation}
for any $f\in M^*$ and $x\in M$. Notice that the grading on $M^*$ is given by 
$$(M^*)^k=\Hom_\Bbbk^0(M^{-k},\Bbbk).$$

More generally, let $M,N$ be two graded $H$-modules, we define a graded $H$-module structure on the $\Bbbk$-vector space $\Hom^\bullet_{\Bbbk}(M,N)$ by
\begin{equation}\label{internalhom}
(h\cdot f)(x)=\sum_h h_2f(S^{-1}(h_1)x).
\end{equation} 
It is easily checked that there is an isomorphism of graded $H$-modules
\begin{equation}\label{internalhom_dual}
\Hom^\bullet(M,N)\cong M^*\otimes N.  
\end{equation}
Furthermore, it is easy to check that the natural adjunction maps
\begin{equation}
\Bbbk\longrightarrow M^*\otimes M, \quad 1\mapsto \sum_{i}e_i^*\otimes e_i,
\end{equation}
\begin{equation}
M\otimes M^*\longrightarrow\Bbbk , \quad x\otimes f\mapsto f(x),
\end{equation}
commute with the $H$-actions, where $\{e_i\}$ is a homogeneous basis for $M$ and $\{e_i^*\}$ is the dual basis.

\begin{remark}\label{alternative-int-hom}
We remark that there is an alternative way to introduce internal homs for $\lmod{H}$, by using
$\Hom^\bullet(M,N)\cong N\otimes M^*$. In this case, the module structure is given by
$$(h\cdot f)(v)=\sum_h h_1f(S(h_2)v).$$
Note that under this action, $M^*$ is \emph{left} dual to $M$, whereas in equation (\ref{internalhom_dual}), $M^*$ plays the role of a \emph{right} dual. With the alternative convention, the modified form of the tensor-hom adjuction (c.f.~equation \eqref{eqn-tensor-hom-vec-version})
$$\Hom^\bullet(M\otimes L,N)\cong \Hom^\bullet(M,\Hom^{\bullet}(L,N))$$ 
is an isomorphism of $H$-modules.

The $H$-invariants discussed in Lemma \ref{lem-Hom-invariants} below will be naturally isomorphic for the two versions of internal homs.
\end{remark}

By a classical result of Larson--Sweedler \cite{LS}, $H$ is (graded) Frobenius, and, in particular, it is (graded) self-injective. Let $\Lambda$ be a fixed non-zero left integral in $H$, i.e., an element in $H$ such that for all $h\in H$, one has
\begin{equation}
h \Lambda = \epsilon(h)\Lambda.
\end{equation}
The element is unique up to a non-zero scalar, and hence a homogeneous element using that the multiplication on $H$ and $\epsilon$ are degree-preserving maps. Denote the degree of $\Lambda$ by $\mathrm{deg}(\Lambda):=\l$.
Then for any $H$-module $M$, we have a canonical embedding of $M$ into the injective $H$-module $M\otimes H$:
\begin{equation}
\rho_M: M\longrightarrow M\otimes H\shift{\l},\quad m\mapsto m\otimes \Lambda,
\end{equation}
because of the following result.

\begin{lemma}\label{projectivelemma}
Let $H$ be a (graded) Hopf algebra and $M$ a (graded) $H$-module. Then there is an isomorphism of tensor products of (graded) $H$-modules
\[
\phi_M: M\otimes H\cong M_0\otimes H,\quad m\otimes h\mapsto \sum_h S^{-1}(h_1)m\otimes h_2.
\]
Here $M_0$ stands for the vector space $M$ endowed with the trivial $H$-module structure
\[
hm_0=\epsilon(h)m_0,
\]
for any $h\in H$ and $m_0\in M_0$. In particular, $M\otimes H$ is projective and injective.
\end{lemma}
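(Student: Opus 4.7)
The plan is to produce an explicit two-sided inverse $\psi_M$ to $\phi_M$, verify $H$-equivariance of both maps, and then deduce the projective-injective statement from the observation that the target $M_0\otimes H$ is manifestly free over $H$ together with the Larson--Sweedler self-injectivity of $H$.

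For the inverse I would propose
\[
\psi_M \colon M_0 \otimes H \longrightarrow M\otimes H, \qquad m \otimes h \longmapsto \sum_h h_1 \, m \otimes h_2.
\]
To check $H$-linearity of $\phi_M$, I would unwind the two actions: on the source, $h \cdot (m\otimes k) = \sum h_1 m \otimes h_2 k$; on the target, since $M_0$ is trivial, $h\cdot (m_0 \otimes k) = m_0 \otimes h k$. Applying $\phi_M$ to the source element and expanding $\Delta(h_2 k)$ via the facts that $\Delta$ is an algebra map and $S^{-1}$ is an anti-homomorphism, both $\phi_M(h\cdot(m\otimes k))$ and $h\cdot\phi_M(m\otimes k)$ reduce to the common expression $\sum_k S^{-1}(k_1)m \otimes h k_2$. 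The verification for $\psi_M$ is symmetric.

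The inverse property $\phi_M\psi_M = \op{Id} = \psi_M\phi_M$ reduces, after using coassociativity to rewrite the threefold coproduct, to the pair of antipode identities
\[
\sum_h S^{-1}(h_2)\, h_1 \;=\; \epsilon(h)\, 1, \qquad \sum_h h_2\, S^{-1}(h_1) \;=\; \epsilon(h)\, 1,
\]
each of which follows by applying $S^{-1}$ to one of the standard antipode axioms and using that $S^{-1}$ reverses multiplication. All maps involved are degree-preserving since $\Delta$ and $S^{\pm 1}$ are homogeneous of degree zero.

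Finally, the projective-injective assertion is immediate from the isomorphism: choosing a homogeneous basis $\{e_i\}$ of $M$ with $\deg e_i = d_i$, the module $M_0 \otimes H$ splits as $\bigoplus_i H\shift{-d_i}$, a direct sum of graded shifts of the rank-one free module. Hence $M \otimes H \cong M_0 \otimes H$ is free and in particular projective; and since $H$ is graded Frobenius by Larson--Sweedler, it is graded self-injective, so projective modules are simultaneously injective. The main execution challenge is the careful bookkeeping of Sweedler indices through $S^{-1}$, but no deeper input is required beyond the antipode axioms and coassociativity.
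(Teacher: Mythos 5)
Your proof is correct and follows the same route as the paper, supplying the Sweedler-index verification (with the same inverse $\psi_M$ and the two antipode identities $\sum_h S^{-1}(h_2)h_1=\epsilon(h)1=\sum_h h_2 S^{-1}(h_1)$) that the paper leaves as ``an easy exercise.'' The one small divergence is in the final injectivity claim: you invoke ``projective $=$ injective over a self-injective algebra,'' which is fine since $M\in\lmod{H}$ is finite-dimensional so $M_0\otimes H$ is a \emph{finite} direct sum of shifted copies of $H$, whereas the paper instead cites the Bass--Papp-type criterion that arbitrary direct sums of injectives over a Noetherian ring remain injective, which is what one would need if $M$ were allowed to be infinite-dimensional.
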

\begin{proof}
It is an easy exercise to check that the inverse of $\phi_M$ is given by
\[
\psi_M: M_0\otimes H\longrightarrow M\otimes H, \quad m_0\otimes h\mapsto \sum_h h_1m\otimes h_2.
\]
For the last statement, the projectivity of $M_0\otimes H$ is clear. For injectivity, one uses the well-known fact that a (possibly infinite) direct sum of injective $H$-modules remains injective if and only if $H$ is Noetherian.
\end{proof}

Despite the fact that the module $M\otimes H\shift{\l}$ is usually larger than the injective envelope of $M$, the functoriality of this canonical map in $M$ will allow us to understand the morphism spaces in the stable category more explicitly.

Recall that, for any $H$-module $M$, the space $M^H$ of \emph{$H$-invariants in $M$} consists of
\begin{equation}
M^H:=\{m\in M|hm=\epsilon(h)m,~\forall h \in H\}.
\end{equation}

\begin{lemma}\label{lem-Hom-invariants}
The space of $H$-invariants in $\Hom^\bullet(M,N)$ coincides with the space of $H$-module maps between $M$ and $N$:
\[
\Hom^\bullet(M,N)^H=\Hom_H^\bullet(M,N).
\]
In particular, there is an identification $\Hom^0(M,N)^H=\Hom_H^0(M,N)$.
\end{lemma}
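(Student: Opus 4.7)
The plan is to prove the two inclusions directly from the action formula \eqref{internalhom}, using only the antipode axioms $\sum_h h_1 S(h_2) = \epsilon(h) = \sum_h h_2 S^{-1}(h_1)$ and the fact that $S$ is a coalgebra anti-homomorphism. The degree-zero statement follows by restricting the graded identity to degree zero.

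The inclusion $\Hom_H^\bullet(M,N) \subseteq \Hom^\bullet(M,N)^H$ is immediate: if $f$ is $H$-linear, then sliding $S^{-1}(h_1)$ past $f$ gives
\[
(h \cdot f)(x) = \sum_h h_2 f(S^{-1}(h_1) x) = \sum_h h_2 S^{-1}(h_1) f(x) = \epsilon(h) f(x),
\]
so $h \cdot f = \epsilon(h) f$. Conversely, assume $\sum_h h_2 f(S^{-1}(h_1) x) = \epsilon(h) f(x)$ for all $h, x$. Substituting $h = S(a)$ and using $\Delta(S(a)) = \sum_a S(a_2) \otimes S(a_1)$ together with $\epsilon \circ S = \epsilon$ yields the equivalent, cleaner form
\[
\sum_a S(a_1) f(a_2 x) = \epsilon(a) f(x), \qquad (\star)
\]
valid for all $a \in H$ and $x \in M$. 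Now fix $g \in H$; apply $(\star)$ with $a$ replaced by the second Sweedler factor of $\Delta(g) = \sum_g g_1 \otimes g_2$, multiply by $g_1$ on the left, and sum over the outer Sweedler index. Using coassociativity and the counit axiom $\sum g_1 \epsilon(g_2) = g$, one obtains
\[
\sum_g g_{(1)} S(g_{(2)}) f(g_{(3)} x) = g f(x),
\]
where $\sum_g g_{(1)} \otimes g_{(2)} \otimes g_{(3)} := (\Delta \otimes \mathrm{id}) \Delta(g)$. To conclude, invoke the standard identity $\sum_g g_{(1)} S(g_{(2)}) \otimes g_{(3)} = 1 \otimes g$ in $H \otimes H$, which follows by applying $\sum h_1 S(h_2) = \epsilon(h)$ inside the first tensor factor. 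The $\Bbbk$-linear map $H \otimes H \to N$ sending $a \otimes b \mapsto a \cdot f(b x)$ transforms the left-hand side of the previous display into $f(gx)$, hence $f(gx) = g f(x)$, which is the required $H$-linearity.

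The main obstacle is keeping track of the nested Sweedler indices in the reverse direction; once the $S$-twisted form $(\star)$ is isolated, the rest is a routine application of the antipode axiom to the three-fold coproduct. Conceptually, the lemma records the familiar fact that, in a Hopf-algebraic tensor category, the invariants of the internal $\Hom$ recover the morphism spaces of the category. Under the alternative convention of Remark \ref{alternative-int-hom}, an entirely analogous manipulation (with the roles of $S$ and $S^{-1}$ interchanged) produces the same identification, so the invariant subspace is independent of the convention.
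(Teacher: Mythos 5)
Your proof is correct, and both directions follow the same underlying strategy as the paper: direct Sweedler calculus using the antipode axioms applied to the action formula \eqref{internalhom}. The forward inclusion is identical. In the converse, the paper starts from $h f(m)$ and inserts $\sum_h S^{-1}(h_2)h_1 = \epsilon(h)$ to expose the action of $h_2$ on $f$, whereas you first rewrite the invariance condition via the substitution $h = S(a)$ into the cleaner form $(\star)$, then cocompose with $g_1 \otimes (\text{-})$ and contract with $\sum_g g_{(1)} S(g_{(2)}) \otimes g_{(3)} = 1 \otimes g$. These are the same computation run in opposite directions, so there is no genuinely different idea, but the detour through $(\star)$ does make the bookkeeping a bit more transparent. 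Two minor points worth noting: your substitution $h = S(a)$ implicitly uses that $S$ is bijective (which is automatic here, since $H$ is finite dimensional and the action formula already invokes $S^{-1}$), and the closing remark about convention-independence correctly matches Remark \ref{alternative-int-hom}.
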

\begin{proof}
If $f$ is an $H$-linear map, it is clear that, for any $h\in H$ and $m\in M$, 
\[
(h\cdot f)(m)=\sum_h h_2f(S^{-1}(h_1)m)=\sum_h f(h_2S^{-1}(h_1)m)=\epsilon(h)f(m),
\]
so that $h\cdot f=\epsilon(h) f$. Here, in the last equality, we have used the fact that, for any element $h\in H$, the identity $\sum_h S^{-1}(h_2)h_1=\epsilon(h)$ holds. 

On the other hand, if $f\in \Hom_\Bbbk(M,N)^H$, then
\[
h(f(m))=\sum_h h_3f(S^{-1}(h_2)h_1m)=\sum_h(h_2\cdot f)(h_1m)=\sum_h\epsilon(h_2)f(h_1m)=f(hm).
\]
The lemma now follows.
\end{proof}

The lemma can be rephrased as saying that the category $\lmod{H}$ is an \emph{enriched category} over itself.

\begin{lemma}\label{lem-canonical-factorization}
An $H$-module homomorphism $f:M\longrightarrow N$ factors through a projective-injective $H$-module if and only if there is an $H$-module map $g$ making the following diagram commute:
\[
\begin{gathered}
\xymatrix{
M \ar[dr]_{\rho_M} \ar[rr]^{f}&& N\\
& M\otimes H\shift{\l}\ar[ur]_g &
}
\end{gathered} \ .
\]
\end{lemma}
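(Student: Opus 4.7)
The plan is a standard injective-lift argument. The crux is that $\rho_M$ is an $H$-linear embedding of $M$ into a projective-injective object. First I would verify that $\rho_M$ is a well-defined $H$-module map; this uses the left-integral property $h\Lambda = \epsilon(h)\Lambda$, which collapses the diagonal action on $m\otimes \Lambda$ to $\sum_h h_1 m \otimes \epsilon(h_2)\Lambda = hm \otimes \Lambda$, so that $\rho_M(hm) = h\cdot\rho_M(m)$. Since $\Lambda\neq 0$ and the multiplication $M \otimes \Bbbk\Lambda \to M \otimes H\shift{\l}$ is injective, the map $\rho_M$ is an embedding, and its target $M\otimes H\shift{\l}$ is projective-injective by Lemma \ref{projectivelemma} (grading shifts preserve this property).

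The ``if'' direction is then immediate: if such a $g$ exists, then $f = g\circ \rho_M$ factors through the projective-injective module $M\otimes H\shift{\l}$.

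For the converse, suppose $f = \beta\circ\alpha$ with $\alpha\colon M \to P$, $\beta\colon P \to N$, and $P$ projective-injective. I would use injectivity of $P$ to extend $\alpha$ along the $H$-linear injection $\rho_M\colon M\hookrightarrow M\otimes H\shift{\l}$, producing an $H$-module map $\tilde{\alpha}\colon M\otimes H\shift{\l}\to P$ with $\tilde{\alpha}\circ\rho_M = \alpha$. Setting $g := \beta\circ\tilde{\alpha}$ yields $g\circ\rho_M = \beta\circ\alpha = f$, which gives the claimed factorization.

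The main subtlety I anticipate is ensuring that the extension $\tilde{\alpha}$ can be chosen homogeneous of degree zero, so that it is a morphism in $\lmod{H}$ rather than only in the ungraded module category. Since $\rho_M$ and $\alpha$ are both homogeneous of degree zero and $P$ is injective as a graded $H$-module (by self-injectivity of $H$ in the graded sense, as recorded in Section~\ref{stable-sect}), the standard lifting in graded homological algebra supplies the required $\tilde{\alpha}$ in degree zero, completing the argument.
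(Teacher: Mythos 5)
Your proof is correct and takes essentially the same approach as the paper: both arguments hinge on $\rho_M$ being a degree-zero $H$-linear embedding into a projective-injective object, so that graded injectivity supplies the required lift. The paper's proof reduces to the case $N$ projective-injective and splits $\rho_N$ via the naturality square $\rho_N\circ f = (f\otimes\mathrm{Id}_H)\circ\rho_M$, which is merely a repackaging of your direct injective extension of $\alpha$ along $\rho_M$.
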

\begin{proof}
It suffices to prove the result when $N$ is projective-injective. In this case, consider the following commutative diagram.
\[
\begin{gathered}
\xymatrix{
M\ar[r]^f \ar[d]_{\rho_M}& N \ar[d]^{\rho_N} \\
M\otimes H\shift{\l}\ar[r]^{f\otimes \mathrm{Id}_H} & N\otimes H\shift{\l}
}
\end{gathered} \ .
\]
Since both $N$ and $N\otimes H\cong H^{\dim_\nu(N)}$ are injective, and $\rho_N=\mathrm{Id}_N\otimes \Lambda$ is an embedding, there is an $H$-module splitting map $g^\prime \colon N\otimes H\shift{\l}\longrightarrow N$ such that $g^\prime \circ \rho_N=\mathrm{Id}_N$. Now the lemma follows by taking $g=g^\prime \circ (f\otimes \mathrm{Id}_H)$.
\end{proof}

\begin{lemma}\label{lem-null-homotopy-integral}
A degree-zero $H$-module map $f\colon M\longrightarrow N$ factors through the canonical injective map $\rho_M\colon M\longrightarrow M\otimes H\shift{\l}$ if and only if there is $\Bbbk$-linear map  $g\colon M\longrightarrow N$ of degree $-\l$ such that
\[
f(m)= (\Lambda\cdot g)(m) =\sum_\Lambda \Lambda_{2}g(S^{-1}(\Lambda_{1}) m)
\]
for any $m\in M$.
\end{lemma}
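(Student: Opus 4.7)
The plan is to reduce the problem to the \emph{trivial} $H$-module $M_0$ via the module isomorphism $\phi_M \colon M \otimes H \isomorph M_0 \otimes H$ from Lemma \ref{projectivelemma}. Under this isomorphism, the $H$-action on $M_0 \otimes H$ is trivial on the $M_0$ factor and is the regular left multiplication on the $H$ factor, so $M_0 \otimes H\shift{\l}$ is the free graded $H$-module on the graded vector space $M_0\shift{\l}$. Frobenius reciprocity therefore yields a bijection between degree-zero $H$-linear maps $\tilde{f}' \colon M_0 \otimes H\shift{\l} \longrightarrow N$ and $\Bbbk$-linear maps $g \colon M \longrightarrow N$ of degree $-\l$, via $\tilde{f}'(m \otimes 1) := g(m)$, extended $H$-linearly by $\tilde{f}'(m \otimes h) = h \cdot g(m)$.

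Combining this bijection with $\phi_M$, a factorization $f = \tilde{f} \circ \rho_M$ through $\rho_M \colon M \longrightarrow M \otimes H\shift{\l}$ is equivalent to the choice of such a $g$, the correspondence being $\tilde{f} = \tilde{f}' \circ \phi_M$. To extract the stated formula, I would compute directly:
\[
f(m) = \tilde{f}(m \otimes \Lambda) = \tilde{f}'\bigl(\phi_M(m \otimes \Lambda)\bigr) = \tilde{f}'\Bigl(\sum_{\Lambda} S^{-1}(\Lambda_{1}) m \otimes \Lambda_{2}\Bigr) = \sum_{\Lambda} \Lambda_{2} \cdot g(S^{-1}(\Lambda_{1}) m),
\]
which is precisely $(\Lambda \cdot g)(m)$ under the $H$-action of \eqref{internalhom}. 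This simultaneously handles both directions: any $H$-module factorization through $\rho_M$ produces the required $g$, and, conversely, any degree-$(-\l)$ linear map $g$ produces an explicit $\tilde{f}$ via the transport-of-structure recipe $\tilde{f}(m \otimes h) = \sum_h h_{2} \cdot g(S^{-1}(h_{1}) m)$ such that $\tilde{f} \circ \rho_M = f$.

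Rather than a real obstacle, the main bookkeeping consists of two elementary verifications. First, one must check that the formula $\tilde{f}(m \otimes h) = \sum_h h_{2} \cdot g(S^{-1}(h_{1}) m)$ genuinely defines an $H$-linear map; this is a routine Sweedler computation essentially identical to the one in the proof of Lemma \ref{lem-Hom-invariants}, using the antipode identity $\sum_h h_{2} S^{-1}(h_{1}) = \epsilon(h)$. Second, one must track degrees: since $\Lambda$ is homogeneous of degree $\l$ and $\rho_M$ is a degree-zero map into $M \otimes H\shift{\l}$, the map $g$ is forced to have degree $-\l$ exactly so that $\tilde{f} \circ \rho_M$ is degree-zero. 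With these checks in place, the lemma follows immediately from Lemma \ref{projectivelemma} and the freeness of $M_0 \otimes H$ as an $H$-module.
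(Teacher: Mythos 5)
Your proof is correct, and the approach is essentially the same as the paper's but packaged more systematically. Both proofs hinge on the same formula $\tilde f(m\otimes h)=\sum_h h_2\, g(S^{-1}(h_1)m)$; the difference is in how $H$-linearity of $\tilde f$ is established. The paper simply writes the formula down and verifies $H$-linearity by a direct Sweedler computation. You instead observe that this map is the composite $\tilde f'\circ\phi_M$, where $\phi_M\colon M\otimes H\isomorph M_0\otimes H$ is the untwisting isomorphism of Lemma~\ref{projectivelemma} (already known to be $H$-linear) and $\tilde f'$ is the $H$-linear extension of $g$ via the free-forgetful adjunction, so $H$-linearity is automatic. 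This makes the hands-on verification you flag as your ``first elementary verification'' actually unnecessary --- it already follows from Lemma~\ref{projectivelemma} and freeness of $M_0\otimes H$. What your route buys is that the correspondence $g\leftrightarrow\tilde f$ is manifestly a \emph{bijection}, which handles both directions of the ``if and only if'' at once (the paper treats the two implications separately, taking $g(m):=\widehat g(m\otimes 1)$ for the converse --- which, note, is precisely your Frobenius-reciprocity correspondence evaluated at $\phi_M(m\otimes 1)=m\otimes 1$). Your degree bookkeeping is also correct.
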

\begin{proof}
If $f=\Lambda\cdot g$ for some $\Bbbk$-linear $g:M\longrightarrow N$, we will extend $g$ to an $H$-linear map 
$$
\widehat{g}:M\otimes H\longrightarrow N,\quad
\widehat{g}(m\otimes h):= (h\cdot g)(m)= \sum_h h_{2}g(S^{-1}(h_{1}) m).
$$
It will then follow by construction that $f=\widehat{g}\circ \rho_M$. Indeed, we check that $\widehat{g}$ is $H$-linear. For any $x,h\in H$ and $m\in M$, we have that
\begin{align*}
\widehat{g}(x\cdot(m\otimes h)) & = \sum_x \widehat{g}(x_{1}m\otimes x_{2}h) = \sum_{x,h} (x_{2}h)_{2} g(S^{-1}((x_{2}h)_{1})x_{1}m)\\
& = \sum_{x,h} x_{3}h_{2}g(S^{-1}(h_{1})S^{-1}(x_{2})x_{1}m)=\sum_{x,h}x_{2}h_{2}g(S^{-1}(h_{1})\epsilon(x_{1})m)\\
& = \sum_h xh_2g(S^{-1}(h_1)m) =x \widehat{g}(m\otimes h).
\end{align*}
Here in the fourth equality, we have used the fact that, for any element $x\in H$, it holds that $\sum_x S^{-1}(x_2)x_1=\epsilon(x)$.

Conversely, if $f$ factors as a composition of $H$-linear maps
\[
f: M\stackrel{\rho_M}{\longrightarrow}M\otimes H \stackrel{\widehat{g}}{\longrightarrow} N,
\]
so that $f(m)=\widehat{g}(m\otimes \Lambda)$ for any $m\in M$, we then define a $\Bbbk$-linear map $g:M\longrightarrow N$ by $g(m):=\widehat{g}(m\otimes 1)$. It remains to verify that $\Lambda\cdot g=f$. To do this, we compute, for any $m\in M$, 
\begin{align*}
(\Lambda \cdot g)(m) & = \sum_\Lambda \Lambda_2 g(S^{-1}(\Lambda_1)m)=\sum_\Lambda \Lambda_2 \widehat{g}(S^{-1}(\Lambda_1)m\otimes 1)= \sum_\Lambda \widehat{g}(\Lambda_2(S^{-1}(\Lambda_1)m\otimes 1)) \\
& = \sum_\Lambda \widehat{g}(\Lambda_2 S^{-1}(\Lambda_1)m\otimes \Lambda_3) =\sum_\Lambda \widehat{g}(\epsilon(\Lambda_1)m\otimes \Lambda_2) = \widehat{g}(m\otimes \Lambda)\\
& = \widehat{g}\circ \rho_M (m) =f(m).
\end{align*}
The result follows.
\end{proof}

\begin{theorem}\label{thm-hom-as-stable-invariant}
Let $H$ be a finite-dimensional graded Hopf algebra with a non-zero left integral $\Lambda\in H$. For any $H$-modules $M$, $N$, there is a canonical isomorphism 
\[
\Hom_{\slmod{H}}^\bullet(M,N) =\dfrac{\Hom^\bullet(M,N)^H}{\Lambda\cdot \Hom^{\bullet-\l}(M,N)},
\]
which is natural in both $M$ and $N$.
\end{theorem}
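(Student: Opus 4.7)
The plan is to read off the numerator and denominator of the quotient defining $\Hom_{\slmod{H}}^\bullet(M,N)$ separately, by combining the three preceding lemmas. By definition of the stable hom, the left-hand side is $\bigoplus_{i\in\mZ} \Hom^i_{\lmod{H}}(M,N)/\bfI^i_H(M,N)$, so it suffices to produce canonical graded identifications $\bigoplus_i \Hom^i_{\lmod{H}}(M,N) \cong \Hom^\bullet(M,N)^H$ for the numerator and $\bigoplus_i \bfI^i_H(M,N) \cong \Lambda \cdot \Hom^{\bullet-\l}(M,N)$ for the denominator. The first identification is immediate from Lemma \ref{lem-Hom-invariants} applied degree by degree, since a homogeneous $\Bbbk$-linear map of degree $i$ is $H$-equivariant if and only if, viewed as a vector in $\Hom^\bullet(M,N)$ of degree $i$, it is an $H$-invariant for the internal-hom action \eqref{internalhom}.

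For the second identification, I would first invoke Lemma \ref{lem-canonical-factorization} to replace the condition ``factors through some projective-injective'' by the cleaner equivalent condition ``factors through the canonical embedding $\rho_M\colon M \to M \otimes H\shift{\l}$.'' Lemma \ref{lem-null-homotopy-integral} then identifies the degree-$i$ null-homotopic morphisms with those of the form $\Lambda \cdot g$, where $g\colon M \to N$ is a (not necessarily $H$-linear) $\Bbbk$-linear map of degree $i - \l$. Any such $\Lambda \cdot g$ automatically lands inside $\Hom^i(M,N)^H$: for any $h\in H$, the left-integral property yields
\[
h\cdot(\Lambda \cdot g) = (h\Lambda)\cdot g = \epsilon(h)\,\Lambda \cdot g,
\]
so the $\Lambda$-action defines a well-defined $\Bbbk$-linear map $\Hom^{i-\l}(M,N) \to \Hom^i(M,N)^H$ whose image is precisely $\bfI^i_H(M,N)$.

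Assembling the two identifications yields the claimed isomorphism. Naturality in both $M$ and $N$ is essentially automatic: $\Hom^\bullet(-,-)$ is a bifunctor, the $H$-action on it defined by \eqref{internalhom} is natural in each variable, and both the passage to $H$-invariants and the quotient by $\Lambda\cdot(-)$ are natural constructions, so pre- and post-composition with morphisms $M'\to M$ and $N\to N'$ descend compatibly to the quotient. I do not anticipate a genuine obstacle; the substantive work has been absorbed into the three preceding lemmas, and the remaining task is mainly bookkeeping. The only delicate point is to track the degree shift by $\l = \deg(\Lambda)$: because $\rho_M$ raises degree by $\l$, a null-homotopic map in degree $i$ arises from a preimage in degree $i-\l$, which is precisely the source of the shift in the argument of $\Hom^{\bullet-\l}$ appearing in the denominator.
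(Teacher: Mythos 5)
Your proof is correct and follows essentially the same route as the paper: identify the numerator via Lemma~\ref{lem-Hom-invariants} and the denominator by chaining Lemmas~\ref{lem-canonical-factorization} and~\ref{lem-null-homotopy-integral}, degree by degree. The one small addition you make — verifying directly that $\Lambda\cdot g$ lands in the $H$-invariants via the left-integral property — is a nice sanity check that the paper leaves implicit (it follows there because null-homotopic maps are in particular $H$-linear), but it does not change the substance of the argument.
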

\begin{proof}
It suffices to show the statement in degree zero. By Lemma \ref{lem-Hom-invariants}, the numerator in the equality above coincides with the space of $H$-intertwining maps. Combining Lemma \ref{lem-canonical-factorization} and \ref{lem-null-homotopy-integral}, one sees that the space of maps between two $H$-modules that factor through projective-injective modules coincides with $\bfI_H^\bullet(M,N)\cong\Lambda\cdot \Hom^{\bullet-\l}(M,N)$. The theorem follows.
\end{proof}

The theorem implies that the stable category $\slmod{H}$ for a finite-dimensional Hopf algebra is equipped with an \emph{internal Hom}, which is no other than the space of graded vector space homomorphisms $\Hom^\bullet(M,N)$. 

\begin{corollary}\label{eqn-tensor-hom-adj}
The graded tensor-hom adjunction holds in $\slmod{H}$. That is, for any $M$, $N$ and $L$ in $\slmod{H}$, there is an isomorphism of graded vector spaces
\begin{equation*}
\Hom_{\slmod{H}}^\bullet(M\otimes L, N)\cong  \Hom_{\slmod{H}}^\bullet(L,\Hom^\bullet(M,N)).
\end{equation*}
In particular, there is an isomorphism of ungraded vector spaces
\[
\Hom_{\slmod{H}}(M,N)\cong \Hom_{\slmod{H}}(\Bbbk,\Hom^\bullet(M,N))
\]
functorial in $M$ and $N$.
\end{corollary}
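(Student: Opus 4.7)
The plan is to lift the vector-space tensor-hom adjunction \eqref{eqn-tensor-hom-vec-version} to the $H$-module level and then push it down to the stable category through Theorem \ref{thm-hom-as-stable-invariant}. Concretely, the map
\[
\Phi \colon \Hom^\bullet(M \otimes L, N) \longrightarrow \Hom^\bullet(L, \Hom^\bullet(M, N)), \qquad \Phi(f)(l)(m) := f(m \otimes l),
\]
is already an isomorphism of graded $\Bbbk$-vector spaces, so the task is to show that $\Phi$ is $H$-equivariant for the actions of \eqref{internalhom}, and then to verify that it respects both the subspaces of $H$-invariants and the $\Lambda$-translates appearing in Theorem \ref{thm-hom-as-stable-invariant}.

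The heart of the matter is checking $H$-linearity. Writing $\Delta^{(2)}(h) = \sum h_{(1)} \otimes h_{(2)} \otimes h_{(3)}$ via coassociativity, and using the twisted comultiplication identity $\Delta(S^{-1}(x)) = \sum S^{-1}(x_{(2)}) \otimes S^{-1}(x_{(1)})$, unpacking both sides of $\Phi(h \cdot f) = h \cdot \Phi(f)$ on $l \in L$ and $m \in M$ yields the common expression
\[
\sum_h h_{(3)}\, f\bigl(S^{-1}(h_{(2)}) m \otimes S^{-1}(h_{(1)}) l\bigr).
\]
This is the main step and essentially the only substantive computation; the rest is formal. Granted $H$-linearity, Lemma \ref{lem-Hom-invariants} then identifies the restriction of $\Phi$ to $H$-invariants with the abelian-categorical adjunction $\Hom_H^\bullet(M \otimes L, N) \cong \Hom_H^\bullet(L, \Hom^\bullet(M, N))$.

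Since $\Phi$ commutes with the full $H$-action, it carries the subspace $\Lambda \cdot \Hom^{\bullet - \l}(M \otimes L, N)$ bijectively onto $\Lambda \cdot \Hom^{\bullet - \l}(L, \Hom^\bullet(M, N))$. By Theorem \ref{thm-hom-as-stable-invariant}, the two stable hom spaces are precisely the quotients of the respective $H$-invariants by these $\Lambda$-translates, so $\Phi$ descends to the desired natural isomorphism in $\slmod{H}$, and naturality in $M$, $L$, $N$ is inherited from the vector-space level. For the particular case, specialize to $L = \Bbbk$ with the trivial $H$-action, use the canonical identification $M \otimes \Bbbk \cong M$, and pass to the degree-zero component of the graded isomorphism.
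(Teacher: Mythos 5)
Your proof is correct and follows the same strategy as the paper: upgrade the linear isomorphism $\Phi$ of \eqref{eqn-tensor-hom-vec-version} to an $H$-module isomorphism, then apply Theorem \ref{thm-hom-as-stable-invariant} to both sides, using $H$-equivariance to see that $\Phi$ identifies invariants with invariants and $\Lambda$-translates with $\Lambda$-translates. The paper simply asserts the $H$-linearity of $\Phi$, whereas you verify it explicitly via the Sweedler calculation landing on $\sum_h h_{(3)} f(S^{-1}(h_{(2)}) m \otimes S^{-1}(h_{(1)}) l)$ on both sides, which is precisely the detail the paper leaves to the reader.
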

\begin{proof}
This follows from taking the canonical isomorphism of $H$-modules (upgraded from the vector space version \eqref{eqn-tensor-hom-vec-version})
\[
\Phi\colon \Hom^\bullet(M\otimes L, N)\isomorph \Hom^\bullet(L,\Hom^\bullet(M,N)),\quad  \Phi(f)(l)(m):=f(m\otimes l),
\]
and applying the theorem to both sides.

The second equation is then established by taking $L=\Bbbk$ and taking degree zero parts on both sides in the first equation.
\end{proof}

\begin{remark}
We will be applying the results in this section to a slightly more general situation than graded Hopf algebras in what follows. In particular, we will be studying graded vector spaces with a non-trivial braiding, and $H$ being a Hopf algebra object in this braided category. The results of this section hold without any changes as long as $H$ is also a Frobenius algebra object.
\end{remark}

\section{The Hopf algebra \texorpdfstring{$H_n$}{Hn} and its bosonization}\label{sec-Hn}
\subsection{Braided vector spaces}
The category $\gvect$ of finite-dimensional $\mZ$-graded vector spaces is naturally a symmetric monoidal category with the symmetric braiding $\tau(v\otimes w)=w\otimes v$. For the purpose of this paper, we will consider a non-symmetric braiding on this category. 

Fix a natural number $N\geq 2$ and let $\Bbbk$ be a field of any characteristic which contains a primitive $N$th root of unity $q$. Given two graded vector spaces $V, W$ define the $\mZ$-linear map $\Psi_{V,W}\colon V\otimes W\to W\otimes V$ determined by
\begin{align}
\Psi_{V,W}(v\otimes w)=q^{\deg(v)\deg(w)}w\otimes v, 
\end{align}
where $v,w$ are homogeneous elements. It follows that $\Psi$ defines a braiding on the category of $\mZ$-graded vector spaces. We denote the braided monoidal category thus obtained by $\qvect$ (in contrast to the symmetric monoidal category $\gvect$).

%
%To justify that the constructions from Sections \ref{stable-sect} and \ref{stable-hopf-sect} carry through to the more general setup of considering an integral Hopf algebra object in the category $\qvect$, we note the following easy observations. The definition of a Frobenius algebra object in a monoidal category does not use the braiding, cf. \cite{LP}*{Definition~2.2}. This implies right away that if $H$ be a Frobenius algebra object in the braided monoidal category $\qvect$, then $H$ is also a Frobenius algebra object in the symmetric monoidal category $\gvect$, as the categories are the same as monoidal categories. We further note that a Frobenius algebra object is isomorphic to its (left and right) dual, and that projective left and right modules coincide for $H$.

Via a form of Tannakian reconstruction, the category $\gvect$ is equivalent to the category of finite-dimensional comodules over the group algebra $\Bbbk C$, where $C=\langle K\rangle$ is the free abelian group generated by $K$. The Hopf algebra $\Bbbk C$ can be equipped with a dual $R$-matrix $R\colon \Bbbk C\otimes \Bbbk C\to \Bbbk$ defined by
$$R(K^i\otimes K^j)=q^{ij},$$
see e.g. \cite{Maj1}*{Example 2.2.5}.
We denote the category of finite-dimensional $C$-comodules with braiding obtained from $R$ by $\lcomod{C}_q$. Hence there is an equivalence of braided monoidal categories
$$\lcomod{\Bbbk C}_q\simeq \qvect.$$

\subsection{Graded rational modules}\label{ratgraded-sect}
Let $H$ be a Hopf algebra object in $\qvect$. We want to study the category of $H$-modules in $\qvect$ in terms of graded modules over a $\Bbbk$-Hopf algebra. For this, we first pass from $\qvect$ to a braided category of modules over the group algebra of a finite cyclic group. 

Let $C_N$ denote the finite group $C/(K^N)$ and let $\pi_N\colon C\to C_N$ be the canonical quotient homomorphism of groups. Then there is an induced Hopf algebra morphism $\Bbbk C\to \Bbbk C_N$, which, in turn, produces a functor of monoidal categories
\begin{align*}
(\pi_n)_\ast&\colon \lcomod{\Bbbk C}\longrightarrow \lcomod{\Bbbk C_N}, & (V,\delta)\longmapsto (V, (\pi_N\otimes \ide_V)\delta),
\end{align*}
where $\delta$ denotes the left coaction on $V$. The dual $R$-matrix $R$ on $\Bbbk C$ induces a dual $R$-matrix on $\Bbbk C_N$ so that $(\pi_N)_\ast$ becomes a functor of braided monoidal categories
\begin{align*}
(\pi_N)_\ast&\colon \lcomod{\Bbbk C}_q\longrightarrow \lcomod{\Bbbk C_N}_q.
\end{align*}

For the next result, note that $N$ must be invertible in $\Bbbk$ since, as the polynomial $f(x)=x^N-1$ does not have multiple roots in $\Bbbk$, its formal derivative equals $Nx^{N-1}\neq 0$.  

\begin{proposition}
There is an equivalence of braided monoidal categories $\lcomod{\Bbbk C_N}_q\simeq \lomod{\Bbbk C_N}_q$. Here, the latter is the braided monoidal category of $\Bbbk C_N$-modules with braiding given by the $R$-matrix
\begin{equation}\label{Rmatrixgroup}
R=\frac{1}{N}\sum_{i,j} q^{-ij}K^i\otimes K^j.
\end{equation}
\end{proposition}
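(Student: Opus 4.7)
The plan is to exhibit both braided monoidal categories as equivalent to the braided monoidal category of $\mZ/N\mZ$-graded vector spaces equipped with the braiding $\Psi(v_i \otimes w_j) = q^{ij}w_j \otimes v_i$ for homogeneous elements. Since $q$ is a primitive $N$th root of unity and $N$ is invertible in $\Bbbk$, the group algebra $\Bbbk C_N$ is semisimple commutative cocommutative, and the element $K$ is diagonalizable with eigenvalues $\{q^i : i \in \mZ/N\mZ\}$. A $\Bbbk C_N$-module structure on $V$ is therefore the same data as a decomposition $V = \bigoplus_{i\in\mZ/N\mZ}V_i$ with $V_i = \{v \in V : Kv = q^i v\}$; similarly, a $\Bbbk C_N$-comodule structure on $V$ is equivalent to a decomposition $V = \bigoplus_{i\in\mZ/N\mZ}V_i$ with $V_i = \{v \in V : \delta(v) = K^i \otimes v\}$. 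Both assignments are manifestly monoidal functors onto $\mZ/N\mZ$-graded vector spaces, because $\Delta(K^i) = K^i \otimes K^i$ makes the induced action/coaction on $V \otimes W$ compatible with the tensor product of gradings.

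It then remains to verify that, under these two identifications, the induced braidings agree. On the comodule side, the braiding induced by the dual $R$-matrix $R(K^i \otimes K^j) = q^{ij}$ is by definition
\[
\Psi(v_i \otimes w_j) = R(K^i \otimes K^j)\, w_j \otimes v_i = q^{ij}\, w_j \otimes v_i
\]
for homogeneous $v_i \in V_i$ and $w_j \in W_j$. On the module side, I would compute the braiding induced by the stated $R$-matrix $R = \frac{1}{N}\sum_{i,j}q^{-ij}K^i \otimes K^j$ directly: for $v_a \in V_a$ and $w_b \in W_b$,
\[
R \cdot (v_a \otimes w_b) = \frac{1}{N}\sum_{i,j} q^{-ij+ia+jb}\, v_a \otimes w_b,
\]
and the orthogonality relation $\sum_{i=0}^{N-1}q^{i(a-j)} = N\delta_{a\equiv j}$ collapses the double sum to $q^{ab}\, v_a \otimes w_b$. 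Composing with the flip then yields $q^{ab}\,w_b \otimes v_a$, matching the comodule side.

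Finally, I would check that the element $R$ is in fact a valid quasi-triangular structure on the Hopf algebra $\Bbbk C_N$. Invertibility is immediate (with inverse $\frac{1}{N}\sum_{i,j}q^{ij}K^i\otimes K^j$, as a brief orthogonality computation shows), and the hexagon and bialgebra axioms $(\Delta\otimes\ide)(R) = R_{13}R_{23}$, $(\ide\otimes\Delta)(R) = R_{13}R_{12}$, $(\tau\Delta)(x)R = R\Delta(x)$ all reduce to identities for characters of $C_N$, exploiting cocommutativity of $\Bbbk C_N$. Since each of the two categories is determined, up to braided monoidal equivalence, by this common description in terms of $\mZ/N\mZ$-graded vector spaces with the braiding $q^{ij}$, the equivalence follows.

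The main bookkeeping point, and the one I would be most careful about, is that the conventions for dual $R$-matrix versus $R$-matrix and for left comodules versus left modules line up so that the signs of the exponents in $q^{\pm ij}$ cancel correctly; the Fourier-theoretic orthogonality relation for $C_N$ is what bridges the two descriptions, and no deeper argument is required.
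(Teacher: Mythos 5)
Your argument is correct, and it takes a genuinely different route from the paper. The paper first identifies $\lcomod{\Bbbk C_N}$ with $\lomod{\Bbbk[C_N]}$ via the canonical Hopf pairing $(\delta_i, K^j)=\delta_{i,j}$, transfers the dual $R$-matrix to the universal $R$-matrix $\sum_{i,j}q^{ij}\delta_i\otimes\delta_j$ on $\Bbbk[C_N]$, and then invokes the Fourier isomorphism of Hopf algebras $\Bbbk C_N\cong\Bbbk[C_N]$ (sending $K\mapsto\sum_i q^i\delta_i$) to transport this $R$-matrix to $\frac{1}{N}\sum_{i,j}q^{-ij}K^i\otimes K^j$ on $\Bbbk C_N$, verifying this by an explicit computation with the orthogonality of characters. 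You bypass $\Bbbk[C_N]$ entirely: exploiting semisimplicity of $\Bbbk C_N$ (valid since $N$ is invertible in $\Bbbk$) and diagonalizability of $K$, you identify both the module and the comodule categories directly with $\mZ/N\mZ$-graded vector spaces carrying the braiding $q^{ij}$, and check by the same orthogonality relation that the $R$-matrix from \eqref{Rmatrixgroup} induces exactly that braiding. Both are correct; your version is more elementary and avoids introducing the function algebra, whereas the paper's detour through $\Bbbk[C_N]$ makes the Hopf-algebraic Fourier duality explicit and is the route by which the formula \eqref{Rmatrixgroup} was presumably discovered in the first place. One small point you flagged but left open is the direct verification of the quasi-triangularity axioms for \eqref{Rmatrixgroup}; the paper sidesteps this by obtaining $R$ as the image under a Hopf isomorphism of a known $R$-matrix (and also cites a reference). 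In your framework, it would be cleanest to instead argue that your module-to-graded-vector-space functor is an equivalence of monoidal categories, and that the formula $v_a\otimes w_b\mapsto q^{ab}w_b\otimes v_a$ defines a braiding on the target, so the pulled-back natural transformation is automatically a braiding on $\lomod{\Bbbk C_N}$; one then reads off that it is induced by the element $R$ of $\Bbbk C_N\otimes\Bbbk C_N$ and that $R$ satisfies the hexagon and compatibility axioms because the braiding does, rather than checking those identities by hand.
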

\begin{proof}
Denote by $\Bbbk [C_N]$ the algebra of $\Bbbk$-linear functions $C_N\to \Bbbk$. This is a Hopf algebra, dual to the group algebra $\Bbbk C_N$. Consider the basis $\lbrace \delta_i\mid 0\leq i\leq N-1\rbrace$ for $\Bbbk[C_N]$, where $\delta_i(K^j)=\delta_{i,j}$; we also denote $\delta_{k}=\delta_{l}$ if $k=l \mod N$. The relations, and structural morphisms $\Delta$, $\epsilon$, and $S$ of the Hopf algebra structure for $\Bbbk[C_N]$, are given by
\begin{align}
\delta_i\delta_j&=\delta_{i,j}\delta_i,& 1&=\sum_{i=0}^{N-1}\delta_i, &\Delta(\delta_i)&=\sum_{a+b=i}\delta_a\otimes \delta_b, &\epsilon(\delta_i)&=\delta_{i,0},& S(\delta_i)=\delta_{-i}.
\end{align}
An explicit Hopf algebra pairing $(~,~)\colon  \Bbbk[C_N]\otimes\Bbbk C_N \to \Bbbk$ is given by $(\delta_i, K^j)=\delta_{i,j}$. This non-degenerate Hopf algebra pairing defines, as $\Bbbk [C_N]$ is finite-dimensional and (co)commutative, an equivalence of monoidal categories
$$
\lcomod{\Bbbk C_N}\simeq \lomod{\Bbbk[C_N]}, 
$$
where for a homogeneous element $v$ of degree $i$  we define the action $\delta_j\cdot v=\delta_{i,j}v$. 

Under the pairing $(~,~)$ the dual $R$-matrix $R(K^i,K^j)=q^{ij}$ for the group algebra $\Bbbk C_N$ induces on $\Bbbk[C_N]$ the universal $R$-matrix 
\begin{equation}\label{Rmatrix}
R=\sum_{i,j}q^{ij}\delta_i\otimes \delta_j.
\end{equation}
Hence, denoting the obtained braided monoidal category of $\Bbbk[C_N]$-module by $\lomod{\Bbbk[C_N]}_q$, we obtain an equivalence of braided monoidal categories $\lcomod{\Bbbk C_N}_q\simeq \lomod{\Bbbk[C_N]}_q$.

Note also that, since the polynomial $f(x)=x^N-1$ splits over $\Bbbk$, $\Bbbk[C_N]$ is isomorphic to $\Bbbk C_N$ as a Hopf algebra, although not canonically. An isomorphism $\Bbbk C_N\to \Bbbk [C_N]$ is given by sending $K$ to the group like element $\sum_{i}q^{i}\delta_i$. Since $\delta_i$, $i=0,\dots, N$ are mutually orthogonal idempotents, one has $\left(\sum_{i}q^{i}\delta_i\right)^k=\sum_{i}q^{ik}\delta_i$. The inverse is given by sending $\delta_j$ to $\frac{1}{N}\sum_iq^{-ij}K^i$. 
The above isomorphism of Hopf algebras $\Bbbk C_N\cong \Bbbk[C_N]$ makes $\Bbbk C_N$ a quasi-triangular Hopf algebra with universal $R$-matrix given as in equation (\ref{Rmatrixgroup}).  Indeed, we compute that applying the above isomorphism to the universal $R$-matrix of $\Bbbk C_N$ from equation \eqref{Rmatrixgroup} gives 
\begin{align*}
\frac{1}{N}\sum_{i,j} q^{-ij}\sum_{a,b}q^{ia+jb}\delta_a\otimes \delta_b&=\frac{1}{N}\sum_{i,j,a,b} q^{ab}q^{-(a-i)(b-j)}\delta_a\otimes \delta_b\\
&=\frac{1}{N}\sum_{a,b} q^{ab}\delta_a\otimes \delta_b\sum_{i,j}q^{-(a-i)(b-j)}\\
&=\sum_{a,b} q^{ab}\delta_a\otimes \delta_b,
\end{align*}
which is the universal $R$-matrix of $\Bbbk[C_N]$ from equation \eqref{Rmatrix}. See \cite{Maj1}*{Example 2.1.6} for a direct proof of this quasi-triangular Hopf algebra structure.
\end{proof}

The convolution inverse $R^{-\ast}$ is given by 
\begin{equation}
R^{-\ast}=(S\otimes \ide)R=\frac{1}{N}\sum_{i,j}q^{ij}K^i\otimes K^j.
\end{equation}

In any braided monoidal category $\cB$, we can form the braided tensor product $D_1\otimes D_2$ of two algebra objects $D_1, D_2$ in $\cB$. The product $m_{D_1\otimes D_2}$ is given by
\[
m_{D_1\otimes D_2}=(m_{D_1}\otimes m_{D_2})(\ide_{D_1}\otimes \Psi_{D_2,D_1}\otimes \ide_{D_2}).
\]
Tensor products of coalgebra objects are defined similarly.
We can also define bialgebra (or Hopf algebra objects) in $\cB$. These are sometimes called \emph{braided Hopf algebras}, see e.g. \cite{Maj1}*{Definition 9.4.5}. The crucial point is that a bialgebra $B$ in $\cB$ is both an algebra and coalgebra in $\cB$ such that $\Delta$ and $\epsilon$ are morphisms of algebras, i.e.  
\begin{gather}\label{braidedhopf1} 
\begin{split}\Delta_B \circ m_B=(m_B\otimes m_B)\circ (\ide_B\otimes \Psi_{B,B}\otimes \ide_B)\circ (\Delta_B\otimes \Delta_B), \\
\Delta_B\circ 1_B=1_B\otimes 1_B,\qquad
\epsilon\circ m =\epsilon\otimes \epsilon, \qquad 1\circ \epsilon=\ide.\end{split}
\end{gather}

Let $H$ be a braided Hopf algebra in $\qvect$. Then the image of $H$ under the composite functor
$$\cP\colon \lcomod{C}_q\longrightarrow \lcomod{C_N}_q\isomorph \lomod{\Bbbk C_N}_q,$$
is a braided Hopf algebra in $\lomod{\Bbbk C_N}_q$. By slight abuse of notation, this image is also denoted by $H$. We may now consider the Radford--Majid biproduct (\cite{Rad}, also called the bosonization \cite{Maj1}*{Theorem 9.4.12}) $H\rtimes \Bbbk C_N$. By construction, there is an equivalence of categories
$$\lomod{H\rtimes \Bbbk C_N}\cong \lomod{H}~(\lmod{\Bbbk C_N}_q),$$
where the latter denotes the category of modules over $H$ within the braided monoidal category $\lomod{\Bbbk C_N}_q$. That is, the morphisms of the $H$-module structure are all morphisms in this category, cf. \cite{Maj1}*{Section~9.4}. The monoidal functor $\cP$ therefore restricts to a monoidal functor
$$\cP_H\colon \lomod{H}(\qvect)\longrightarrow \lomod{H\rtimes \Bbbk C_N}.$$

Note that, in addition, $H$ is a graded $\Bbbk$-algebra, and the bosonization $H\rtimes \Bbbk C_N$ is a graded Hopf algebra, where $\deg K=0$. Thus, we can consider graded modules over $H\rtimes \Bbbk C_N$, and the essential image of the functor $\cP_H$ is contained in $\lmod{H\rtimes \Bbbk C_N}$.

\begin{definition}\label{rational-graded-def}
A graded $H\rtimes \Bbbk C_N$-module $V=\bigoplus_{i\in \mZ} V^i$ is a \emph{rational graded module} if  for any $v\in V^i$, $K\cdot v=q^i v$. 

We denote the category of rational graded $H\rtimes \Bbbk C_N$-modules, together with morphisms of graded $H\rtimes \Bbbk C_N$-modules, by $\lratmod{H\rtimes \Bbbk C_N}$.
\end{definition}

Working with rational graded modules we obtain a characterization of the braided monoidal category $\lmod{H}$ $(\lmod{\Bbbk C}_q)$ in terms of modules over the finite-dimensional Hopf algebra $H\rtimes \Bbbk C_N$:

\begin{proposition}\label{rational-graded-prop}
An $H\rtimes \Bbbk C_N$-module $V$ is in the essential image of the monoidal functor $\cP_H$ if and only if $V$ is a rational graded module.
\end{proposition}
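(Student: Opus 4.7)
The plan is to show both directions by tracing the functor $\cP_H$ concretely and constructing its quasi-inverse on rational graded modules. For the forward direction, I will take an object $V \in \lomod{H}(\qvect)$, which is a $\mZ$-graded vector space with a grading-preserving $H$-action, and verify that $\cP_H(V)$ is rational graded. The functor $\cP_H$ endows the underlying vector space with a $\Bbbk C_N$-module structure through the chain of equivalences $\lcomod{\Bbbk C}_q \to \lcomod{\Bbbk C_N}_q \simeq \lomod{\Bbbk[C_N]}_q \simeq \lomod{\Bbbk C_N}_q$ established in the previous proposition. Under the explicit isomorphism $\Bbbk C_N \cong \Bbbk[C_N]$ given by $K \mapsto \sum_i q^i \delta_i$, the element $K$ acts on a homogeneous $v$ of degree $j$ by $K \cdot v = q^j v$. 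Together with the retained $H$-action and the $\mZ$-grading (compatible with $\deg K = 0$), this realizes $V$ as a graded $H \rtimes \Bbbk C_N$-module satisfying the rationality condition.

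For the converse, I would start from a rational graded $H \rtimes \Bbbk C_N$-module $V$ and construct a preimage in $\lomod{H}(\qvect)$. The $\mZ$-grading on $V$ supplies a $C$-coaction $v \mapsto K^j \otimes v$ for $v \in V^j$, so the underlying space becomes an object of $\qvect = \lcomod{\Bbbk C}_q$. The $H$-action is obtained by restriction along the natural inclusion $H \hookrightarrow H \rtimes \Bbbk C_N$; because $V$ is a graded module over $H \rtimes \Bbbk C_N$ and $H$ sits inside with its original grading, this action preserves $\mZ$-gradings and is thus a morphism in $\qvect$. Applying $\cP_H$ then returns the original $H$-action, while the rationality hypothesis ensures that the $K$-action is also recovered. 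Naturality of these constructions in $V$ yields the desired equivalence with the full subcategory of rational graded modules.

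The main subtlety lies in matching the two a priori distinct $K$-actions on $V$: the one coming from $\cP_H$ via the grading and the one from the given $H \rtimes \Bbbk C_N$-module structure. Both act by $q^j$ on homogeneous vectors of degree $j$---the former by the explicit form of the isomorphism $\Bbbk C_N \cong \Bbbk[C_N]$ described in the previous proposition, the latter by the rationality hypothesis---so they coincide. This matching is precisely why rationality is both necessary and sufficient for $V$ to lie in the essential image of $\cP_H$.
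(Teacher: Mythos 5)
Your proof is correct and follows essentially the same route as the paper's: in the forward direction you compute that $K$ acts by $q^{\deg}$ on homogeneous vectors via the isomorphism $\Bbbk C_N \cong \Bbbk[C_N]$; in the converse you reconstruct the $C$-coaction from the grading, restrict the $H$-action along $H \hookrightarrow H\rtimes \Bbbk C_N$, and check that the rationality hypothesis makes the two $K$-actions agree. The last paragraph spelling out why the two $K$-actions match is the same key observation the paper uses, just stated more explicitly.
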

\begin{proof}
Let $V$ be an $H\rtimes \Bbbk C_N$-module in the essential image of $\cP_H$. Then, in particular, $V$ is a graded $H\rtimes \Bbbk C_N$-module. For a vector $v\in V^i$ we have that 
$$K\cdot v=\left(\sum_i q^i \delta_i\right)\cdot v=q^i v.$$
Hence, $V$ is a rational graded module. Conversely, let $W$ be a rational graded module over $H\rtimes \Bbbk C_N$. Then $W$ is graded, and hence a $\Bbbk C$-comodule. Using that $H\hookrightarrow H\rtimes \Bbbk C_N$ is a graded subalgebra, $W$ becomes a graded $H$-module, denoted by $W'$. We have to show that $\cP_H(W')$ and $W$ are isomorphic as graded $H\rtimes \Bbbk C_N$-modules. By construction, they are the same graded $H$-modules, and for a vector $w\in \cP_H(W'^i)$, $K\cdot w=q^i\cdot w$. As $W$ is rational graded, the same formula describes the $C_N$-action on $W$. It follows that $\cP_H(W')$ and $W$ are also isomorphic as  $H\rtimes \Bbbk C_N$-modules.
\end{proof}

It follows that, as a full subcategory of $\lmod{H\rtimes \Bbbk C_N}$, $\lmod{H}$ is closed under tensor products and extension. As all rational $C_N$-modules are graded modules, all constructions from Section \ref{notation-sect} can be applied to rational graded $C_N$-modules. In particular, the internal graded hom $\Hom^\bullet(M,N)$ of two rational graded modules is itself a rational graded module.

\begin{notation}
This section shows that the category $\lmod{H}$ of graded $H$-modules has a tensor product which can either be computed using the coproduct within $\qvect$ or, equivalently, the coproduct of the bosonization by Proposition \ref{rational-graded-prop}. In Section \ref{sec-tensorideal}, we will simply denote the resulting monoidal category by $\lmod{H}$.
\end{notation}

\subsection{A braided Hopf algebra}
We first fix some notation and assumptions. Let $n\geq 2$ be a positive integer, and factorize $n=p_1^{a_1}\ldots p_t^{a_t}$ as a product of distinct prime powers. Denote by $m=p_1\ldots p_t$ the radical of $n$ and define $N:=n^2/m$. Set $n_k:=n/p_k$, $m_k:=m/p_k$.

We assume the ground field $\Bbbk$ contains a primitive $N$th root of unity $q$. Then we denote $\xi:=q^{n/m}$, which is a primitive $n$th root of unity, and $\xi_k:=\xi^{m_k}=q^{n_k}$.%, a primitive $p_kn/m$-th root of unity in $\Bbbk$.

\begin{definition}
Let $H_n$ be the $\Bbbk$-algebra
\[
H_n:=\dfrac{\Bbbk[\d_1,\dots, \d_t]}{(\d_1^{p_1},\dots, \d_t^{p_t})},
\]
which is graded by setting $\mathrm{deg}(\d_k)=n_k$ for all $1\leq k \leq t$. 
\end{definition}

%Note that $H_n$ is the braided tensor product of its factors $\Bbbk[\d_k]/(\d_k^{p_k})$ in $\qvect$.

\begin{lemma}\label{lem-Hn-Frobenius-Hopf}
The algebra $H_n$ is Frobenius with a non-degenerate trace pairing given on basis elements by
\[
\mathrm{Tr}(\d_1^{a_1}\cdots \d_t^{a_t})=
\left\{
\begin{array}{ccc}
1 && (a_1,\dots, a_t)=(p_1-1,\dots , p_t-1),\\
0 && \textrm{otherwise.}
\end{array}
\right.
\]
\end{lemma}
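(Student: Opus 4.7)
The plan is direct linear algebra: produce an explicit basis of $H_n$, compute the Gram matrix of the bilinear form $(x,y)\mapsto \mathrm{Tr}(xy)$ in that basis, and observe that this matrix has a single nonzero entry in each row and each column.

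First I would fix the monomial basis
\[
\mathcal{B}:=\bigl\{\d^{\mathbf{a}}:=\d_1^{a_1}\cdots \d_t^{a_t}\;\big|\;0\leq a_k\leq p_k-1,\;1\leq k\leq t\bigr\}
\]
of $H_n$, which is a basis because the generators $\d_1,\dots,\d_t$ commute and each is nilpotent of order $p_k$ (so $H_n$ is a tensor product of truncated polynomial algebras $\Bbbk[\d_k]/(\d_k^{p_k})$, whose dimension is $p_1\cdots p_t^{a_t}$-style... more precisely, $\prod_k p_k$ generators give dimension $\prod p_k$ for the truncations, and the full $H_n$ has dimension $\prod p_k$). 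Extending $\mathrm{Tr}$ linearly from $\mathcal{B}$ as in the statement gives a well-defined linear functional on $H_n$.

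Next I would compute, for two basis elements $\d^{\mathbf{a}}$ and $\d^{\mathbf{b}}$, the product
\[
\d^{\mathbf{a}}\cdot \d^{\mathbf{b}}=\d_1^{a_1+b_1}\cdots \d_t^{a_t+b_t}.
\]
If some $a_k+b_k\geq p_k$ the product vanishes in $H_n$; otherwise it is the basis element $\d^{\mathbf{a}+\mathbf{b}}$. By the defining formula for $\mathrm{Tr}$, we therefore have
\[
\mathrm{Tr}(\d^{\mathbf{a}}\cdot \d^{\mathbf{b}})=
\begin{cases} 1 & \text{if } b_k=p_k-1-a_k \text{ for all } k,\\ 0 & \text{otherwise.}\end{cases}
\]
Thus, under the involution $\sigma(\mathbf{a}):=(p_1-1-a_1,\dots,p_t-1-a_t)$ on the index set, the Gram matrix of the pairing $(x,y)=\mathrm{Tr}(xy)$ in the basis $\mathcal{B}$ is the permutation matrix of $\sigma$, which is manifestly invertible.

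This shows that the pairing is non-degenerate, so $\mathrm{Tr}$ is a Frobenius trace and $H_n$ is a Frobenius algebra. I do not anticipate any serious obstacle: the only thing to be careful about is observing that the bilinear form is actually associative, but that is automatic from its definition via the product of $H_n$. I would also briefly note, for later use, that the unique top monomial $\d_1^{p_1-1}\cdots\d_t^{p_t-1}$ is homogeneous of degree $\sum_k (p_k-1)n_k$, so $\mathrm{Tr}$ is a homogeneous functional of that degree shifted to $0$, making $H_n$ graded Frobenius in the sense used elsewhere in the paper.
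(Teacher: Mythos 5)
Your proof is correct, and it is the straightforward direct argument (monomial basis, compute the Gram matrix, observe it is a permutation matrix) that the paper omits, stating the lemma without proof as an easy exercise. One small note: the parenthetical discussion of $\dim H_n$ is garbled in your write-up, but the intended content (that $H_n\cong\bigotimes_k \Bbbk[\d_k]/(\d_k^{p_k})$ has dimension $\prod_k p_k$ with the stated monomial basis) is clearly right and does not affect the argument; your closing remark that $\mathrm{Tr}$ is homogeneous of degree $-\ell$ with $\ell=\sum_k(p_k-1)n_k$ is exactly what makes $H_n$ graded Frobenius as used later in the paper.
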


Define the \emph{comultiplication} map  $\Delta\colon H_n\longrightarrow H_n\otimes H_n$ on generators by
\begin{equation}
\Delta(\d_k):=\d_k\otimes 1+1\otimes \d_k,
\end{equation}
and set the \emph{counit} and \emph{antipode} maps to be
\begin{equation}
\epsilon\colon H_n\longrightarrow \Bbbk,\quad \quad \epsilon (\d_k)=0,
\end{equation}
\begin{equation}
S\colon H_n\longrightarrow H_n^{\mathrm{op}},\quad \quad S(\d_k)=-\d_k,
\end{equation}
for all $1\leq k \leq t$.

\begin{lemma}
The above definitions of $\Delta$, $\epsilon$, and $S$ uniquely extend to give $H_n$ the structure of a primitively generated Hopf algebra object in the braided category $\qvect$ of $q$-vector spaces.
\end{lemma}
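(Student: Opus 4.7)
The plan is to extend each structure map from the generators to all of $H_n$ multiplicatively (respectively anti-multiplicatively for $S$), and then verify that the extensions are well defined and satisfy the braided Hopf algebra axioms \eqref{braidedhopf1}. Uniqueness is immediate, since $H_n$ is generated as an algebra by $\d_1,\dots,\d_t$.

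The main task, and the principal obstacle, will be to verify that $\Delta$ respects the defining relations $\d_k \d_l = \d_l \d_k$ (for $k \neq l$) and $\d_k^{p_k} = 0$. The crux is that the multiplication on the braided tensor product $H_n \otimes H_n$ is twisted by $\Psi$: for homogeneous elements,
\[
(a\otimes b)(c\otimes d) = q^{\deg(b)\deg(c)}\,ac\otimes bd.
\]
For the commutation relation, a direct computation should give
\[
\Delta(\d_k)\Delta(\d_l) - \Delta(\d_l)\Delta(\d_k) = (1-q^{n_k n_l})\bigl(\d_k\otimes\d_l - \d_l\otimes\d_k\bigr).
\]
For $k\neq l$, the product $p_k p_l$ divides the squarefree radical $m$, so $N = n^2/m$ divides $n_k n_l = n^2/(p_k p_l)$, forcing $q^{n_k n_l}=1$. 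Hence the commutation relation is preserved.

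For nilpotency, I would set $\nu_k := q^{n_k^2}$; the braided swap gives $(1\otimes\d_k)(\d_k\otimes 1) = \nu_k\, (\d_k\otimes 1)(1\otimes\d_k)$, so the $\nu_k$-binomial theorem will yield
\[
\Delta(\d_k)^{p_k} = \sum_{j=0}^{p_k} {p_k \brack j}_{\nu_k} \d_k^{p_k-j}\otimes\d_k^{j}.
\]
The key verification is that $\nu_k$ is a \emph{primitive} $p_k$-th root of unity: comparing the $p$-adic valuations of $N = n^2/m$ and $n_k^2 = n^2/p_k^2$ shows $\gcd(N,n_k^2) = N/p_k$, so $\mathrm{ord}(\nu_k)=p_k$. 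The standard vanishing of ${p_k \brack j}_{\nu_k}$ for $0<j<p_k$ will then kill all the middle terms, while $\d_k^{p_k}=0$ kills the endpoints, yielding $\Delta(\d_k)^{p_k}=0$ as required.

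The remaining checks I expect to be routine: $\epsilon$ and $S$ automatically respect the relations, since they kill or negate $\d_k$; coassociativity, counitality, and the antipode identity all hold on the primitive generators by inspection, and extend from the generators to $H_n$ by multiplicativity once the algebra-morphism property of $\Delta$ and $\epsilon$ is in hand; and every structure map visibly preserves the grading, since $\d_k\otimes 1$ and $1\otimes\d_k$ both sit in degree $n_k$. The real content is therefore concentrated in the two root-of-unity identities $q^{n_k n_l}=1$ for $k\neq l$ and $\mathrm{ord}(\nu_k)=p_k$, both of which are engineered precisely by the choice $N=n^2/m$ fixed at the start of the section.
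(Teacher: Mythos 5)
Your proposal is correct and follows essentially the same route as the paper: both reduce the claim to showing that the defining relations $[\d_k,\d_l]$ and $\d_k^{p_k}$ are compatible with the braided coproduct, which comes down to the two identities $q^{n_kn_l}=1$ (for $k\neq l$) and $\mathrm{ord}(q^{n_k^2})=p_k$, both engineered by the choice $N=n^2/m$. The only cosmetic difference is that the paper frames the argument as ``the elements $[\d_k,\d_l]$ and $\d_k^{p_k}$ are primitive in the free braided Hopf algebra, hence generate a Hopf ideal,'' whereas you verify directly that $\Delta$ descends to the quotient; the underlying $q$-binomial computation and the $p$-adic valuation check $\gcd(N,n_k^2)=N/p_k$ are the same content. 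Your explicit valuation argument for the primitivity of $\nu_k$ is a nice elaboration of a step the paper leaves implicit.
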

\begin{proof}
It is well-known that the free $\Bbbk$-algebra $\Bbbk\langle \d_1,\ldots, \d_t\rangle$ extends to give the structure of a primitively generated braided Hopf algebra in the braided category of $q$-vector spaces in a unique way. The conditions from equation (\ref{braidedhopf1}) inductively imply that
\begin{gather}
\Delta(\d_k^a)=\sum_{i=0}^{a}{a \brack i}_{\xi_k^{n_k}} \d_k^{i}\otimes \d_k^{a-i}\\
\epsilon(\d_k^a)=\delta_{a,0}, \qquad \qquad
S(\d_k^a)=(-1)^a \xi_k^{a(a-1)n_k/2}\d_k^a.
\end{gather}
It hence remains to check that the ideal generated by  $[\d_k,\d_l]$ for $l\neq k$ and $\d_k^{p_k}$ is a Hopf ideal. This follows as the generators are primitive elements:
\begin{equation*}
\Delta([\d_k,\d_l])=[\d_k,\d_l]\otimes 1+1\otimes [\d_k,\d_l],
\end{equation*}
\begin{equation*}
\Delta(\d_k)^{p_k}=(\d_k\otimes 1+1\otimes \d_k)^{p_k}=\sum_{i=0}^{p_k}{p_k \brack i}_{\xi_k^{n_k}} \d_k^{i}\otimes \d_k^{p_k-i}=\d_k^{p_k}\otimes 1+ 1\otimes \d_k^{p_k}.\qedhere
\end{equation*}
Here, we have used that $\xi_l^{n_k}=\xi^{m_ln_k}=q^{n_kn_l}=1$, and that $\xi_k^{n_k}=\xi^{m_kn_k}=q^{n_k^2}$ is a primitive $p_k$-th root of unity.
\end{proof}

\begin{remark}\label{nichols}
The braided Hopf algebra $H_n$ can be constructed as the Nichols algebra over the Yetter--Drinfeld module $V=\op{Span}_\Bbbk\lbrace \d_1,\ldots, \d_t\rbrace$ over the group $C_N$ (see e.g. \cite{AS} for this construction). 
The $C_N$-coaction $\delta$ on $V$ is given by $\delta(\d_k)=K^{n_k}\otimes \d_k$, and the $C_N$-action is given by $K\cdot \d_k=\xi_k\d_k$.
The Yetter--Drinfeld braiding $\Psi_V$ of $V$ determines the relations in the Nichols algebra $\cB(V)=H_n$. Note that, for distinct indices $k, l=1,\ldots, t$,
$$\Psi(\d_k\otimes \d_l)=\xi_l^{n_k}\d_{l}\otimes \d_k=\d_{l}\otimes \d_k$$
as $p_l$ divides $n_k$. This implies that in the Nichols algebra $H_n$ the relations $[\d_k,\d_l]=0$ hold. Further, $\Psi(\d_k\otimes \d_k)=\xi_k^{n_k}\d_k\otimes \d_k$. Using that $\xi_k^{n_k}$ is a primitive $p_k$th root of unity in $\Bbbk$, this computation of the braiding implies that in the Nichols algebra, $\d_k^{p_k}=0$. These are the only relations (cf. \cite{AS}*{Theorem~4.3}).
This construction as a Nichols algebra proves that $H_n$ is a braided Hopf algebra in $\lmod{\Bbbk C_N}_q$ which is generated by primitive elements. 

This construction of $H_n$ further implies that $H_n$ is self-dual as a braided Hopf algebra. That is, there is a non-degenerate Hopf pairing $\langle \mbox{-},\mbox{-}\rangle\colon H_n\otimes H_n\to \Bbbk$, defined on generators by
$\langle \d_k,\d_l\rangle=\delta_{k,l}$ in the category $\qvect$ (see \cite{Lus}*{Proposition~1.2.3}).

By construction, $H_n$ is a commutative algebra. Note that, even though $\Psi\Delta(\d_k)=\Delta(\d_k)$ for all generators,  $H_n$ is \emph{not} braided cocommutative in $\qvect$. This follows using \cite{Sch}*{Corollary 5}, since $\Psi_{H_n,H_n}^2\neq \ide\otimes \ide$.
\end{remark}

\begin{remark}
The element $\Lambda:=\d_1^{p_1-1}\cdots \d_t^{p_t-1}
$ has the property that
\begin{align*}
h\Lambda=\epsilon(h)\Lambda,&& \forall h\in H_n.
\end{align*}
That is, $\Lambda$ is an integral element for the braided Hopf algebra $H_n$ (as in \cite{BKLT}*{Definition 3.1}), cf. also Lemma \ref{integrals} below. Note that
\begin{align} 
\mathrm{Tr}(h)=\langle h,\Lambda\rangle,&&\forall h\in H_n,
\end{align}
with respect to the integral and trace map from Lemma \ref{lem-Hn-Frobenius-Hopf}.
We denote the degree of the integral $\Lambda$ by 
\begin{equation}
\l:=\mathrm{deg}(\Lambda)=\sum_{k=1}^t n_k(p_k-1)=\sum_{k=1}^t (n-n_k).
\end{equation}
\end{remark}

\begin{remark}
Another way to view the braided Hopf algebra $H_n$ is as a braided tensor product
$$ H_n\cong u_{\xi_1^{n_1}}^+(\mathfrak{sl}_2)\otimes \ldots \otimes u_{\xi_t^{n_t}}^+(\mathfrak{sl}_2)$$
of positive parts $u_{\xi_k^{n_k}}^+(\mathfrak{sl}_2)\cong \Bbbk[\d_k]/(\d_k^{p_k})$ of the small quantum group at $p_k$th root of unity $\xi_k^{n_k}$. This follows using \cite{AS2}*{Lemma~4.2}.
\end{remark}

\subsection{The bosonization of \texorpdfstring{$H_n$}{Hn}}
In order to study modules over $H_n$ in terms of rational graded modules, we consider the bosonization $H_n\rtimes \Bbbk C_N$. Using Section \ref{ratgraded-sect}, $H_n$ is a Hopf algebra object in  $\lomod{C_N}_q$. Hence, we can form the bosonization $H_n\rtimes \Bbbk C_N$ \cite{Rad}.

\begin{lemma}\label{lem-bosonized-Hn}
The Hopf algebra $H_n\rtimes \Bbbk C_N$ is generated by the elements $\d_1,\ldots, \d_t$ and $K$ as a $\Bbbk$-algebra, subject to the algebra relations
\[
\begin{array}{cccc}
K^N=1, &&& K\d_k=\xi_k\d_k K, \\ 
\d_k^{p_k}=0, &&& [\d_k,\d_l]=0.
\end{array}
\]
The coproduct, antipode and counit  are given on the generators by
\[
\begin{array}{cccc}
\Delta(K)=K\otimes K, &&& \Delta(\d_k)=\d_k\otimes 1+K^{n_k}\otimes \d_k,\\
S(K)=K^{-1}, &&& S(\d_k)=-K^{-n_k}\d_k,\\
%S^{-1}(K)=K^{-1}, &&& S^{-1}(\d_k)=-\d_kK^{-n_k},\\
\epsilon(K)=1, &&&\epsilon(\d_k)=0.
\end{array}
\]
\end{lemma}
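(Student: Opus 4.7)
The plan is to identify $H_n \rtimes \Bbbk C_N$ as an instance of the Radford--Majid bosonization \cite{Rad} (also \cite{Maj1}*{Theorem~9.4.12}) and then unwind the general construction on the generators $\d_1,\dots,\d_t,K$. Concretely, I will recall the universal formulas for the bosonization and then substitute the specific Yetter--Drinfeld structure on $H_n$ over the cocommutative Hopf algebra $\Bbbk C_N$, which is already recorded in Remark~\ref{nichols}: the $C_N$-action is $K\cdot \d_k=\xi_k\d_k$ and the $C_N$-coaction is $\delta(\d_k)=K^{n_k}\otimes \d_k$.

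For the algebra structure, I will use that on the vector space $H_n\otimes \Bbbk C_N$ the bosonized multiplication reads $(h\otimes g)(h'\otimes g')=h(g_{(1)}\cdot h')\otimes g_{(2)}g'$, which, by cocommutativity of $\Bbbk C_N$, reduces to $(h\otimes K^i)(h'\otimes K^j)=h(K^i\cdot h')\otimes K^{i+j}$. Identifying $\d_k$ with $\d_k\otimes 1$ and $K$ with $1\otimes K$, this yields immediately $K\d_k=(K\cdot\d_k)K=\xi_k\d_k K$, $K^N=1$, and that $\d_k^{p_k}=0$ and $[\d_k,\d_l]=0$ are inherited unchanged from $H_n$. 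A short check using the above multiplication rule confirms that these are precisely the defining relations, since the resulting algebra has dimension $p_1\cdots p_t^{a_t}\cdot N=\dim H_n\cdot \dim\Bbbk C_N$.

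For the coalgebra and antipode data, I will apply the standard bosonization formulas
\[
\Delta_{H_n\rtimes \Bbbk C_N}(h\otimes a)=\bigl(h_{(1)}\otimes h_{(2)}^{[-1]}a_{(1)}\bigr)\otimes\bigl(h_{(2)}^{[0]}\otimes a_{(2)}\bigr),
\]
\[
S_{H_n\rtimes \Bbbk C_N}(h\otimes a)=\bigl(1\otimes S_{\Bbbk C_N}(h^{[-1]}a)\bigr)\bigl(S_{H_n}(h^{[0]})\otimes 1\bigr),
\]
and evaluate them on $K$ and on the primitive generators $\d_k$. On $K$ the computations are trivial. On $\d_k$, using $\Delta_{H_n}(\d_k)=\d_k\otimes 1+1\otimes \d_k$ (in the braided sense) and the coaction $\delta(\d_k)=K^{n_k}\otimes \d_k$, one obtains $\Delta(\d_k)=\d_k\otimes 1+K^{n_k}\otimes \d_k$, and a similar substitution gives $S(\d_k)=-K^{-n_k}\d_k$, agreeing with the formulas asserted.

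There is no substantial obstacle here; the content of the lemma is simply the concrete unwinding of the bosonization construction. The one place requiring minor care is the antipode, where the \emph{braided} antipode of $H_n$ on the primitive generator $\d_k$ is $S_{H_n}(\d_k)=-\d_k$, and one must correctly track the additional factor of $K^{-n_k}$ coming from the coaction term in the biproduct formula; this yields exactly $S(\d_k)=-K^{-n_k}\d_k$ as stated.
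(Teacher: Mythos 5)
Your proposal is correct and takes essentially the same approach as the paper, which simply invokes \cite{Maj1}*{Theorem~9.4.12}; you have just unwound that citation on the generators. One small slip: in the dimension check you write $\dim H_n = p_1\cdots p_t^{a_t}$, but $\dim H_n = p_1\cdots p_t = m$ (the radical of $n$), since $H_n$ has $\Bbbk$-basis $\{\d_1^{b_1}\cdots\d_t^{b_t} : 0\le b_k<p_k\}$; the intended comparison $\dim(H_n\rtimes\Bbbk C_N)=\dim H_n\cdot\dim\Bbbk C_N=mN$ still goes through and confirms that no further relations are imposed.
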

\begin{proof}
This follows using \cite{Maj1}*{Theorem 9.4.12}.
\end{proof}

Inductively, we obtain the formula
\begin{align}\label{coproduct-higher}
\Delta(\d_k^a)&=\sum_{i=0}^a {a \brack i}_{\xi_k^{n_k}}\d_k^i K^{(a-i)n_k}\otimes \d_k^{a-i},
\end{align}
for any integer $a\geq 0$. Using $K^{n_k}\d_l=\d_lK^{n_k}$ for $k\neq l$, we derive a more general formula. For this, given a $t$-tupel of non-negative integers $\mathbf{a}=(a_1,\ldots, a_t)\in\mN_0^t$, we write $\d^{\mathbf{a}}=\d_1^{a_1}\ldots\d_t^{a_t}$ and $K^{\mathbf{a}}=K^{a_1n_1}\ldots K^{a_tn_t}$. Then
\begin{align}\label{coproduct-higher2}
\Delta(\d^{\mathbf{a}})&=\sum_{\mathbf{b}} \left(\prod_{j=1}^t{a_j \brack b_j}_{\xi_j^{n_j}}\right) \d^{\mathbf{b}}K^{\mathbf{a}-\mathbf{b}}\otimes \d^{\mathbf{a}-\mathbf{b}},
\end{align}
where the sum is taken over all $\mathbf{b}=(b_1,\ldots, b_t)\in \mN_0^t$ such that $b_k\leq a_k$ for all $k$, and $\mathbf{a}-\mathbf{b}=(a_1-b_1,\ldots, a_t-b_t)\in \mN^t_0$.

\begin{lemma}\label{integrals}
The element $\Lambda'=\sum_i K^i\d_1^{p_1-1}\ldots\d_t^{p_t-1}$ is a left integral in $H_n\rtimes \Bbbk C_N$.%, and the element  $\Rh_n'=\sum_i \d_1^{p_1-1}\ldots\d_t^{p_t-1}K^i$ is a right integral in $H_n\rtimes \Bbbk C_n$.
\end{lemma}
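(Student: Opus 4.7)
The plan is to verify directly that $\Lambda'$ satisfies $h\Lambda' = \epsilon(h)\Lambda'$ for every element $h$ in the algebra. Since both sides are multiplicative in $h$ (if $h_1, h_2$ each satisfy the integral condition, then so does their product, using that $\epsilon$ is an algebra homomorphism and that scalars commute past everything), it suffices to check the identity on the algebra generators $K, \d_1, \ldots, \d_t$ of $H_n \rtimes \Bbbk C_N$ from Lemma \ref{lem-bosonized-Hn}.

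For $K$, we reindex the sum using $K^N = 1$:
\[
K \Lambda' = \sum_{i=0}^{N-1} K^{i+1} \d_1^{p_1-1}\cdots \d_t^{p_t-1} = \sum_{j=0}^{N-1} K^{j} \d_1^{p_1-1}\cdots \d_t^{p_t-1} = \Lambda' = \epsilon(K)\Lambda'.
\]

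For $\d_k$, the key is to move $\d_k$ past each power $K^i$ using the relation $K\d_k = \xi_k \d_k K$, which yields $\d_k K^i = \xi_k^{-i} K^i \d_k$. Therefore
\[
\d_k \Lambda' = \sum_{i=0}^{N-1} \xi_k^{-i} K^i \d_k \cdot \d_1^{p_1-1}\cdots \d_t^{p_t-1}.
\]
Because the $\d_j$'s commute in $H_n \rtimes \Bbbk C_N$ and $\d_k^{p_k} = 0$, the factor $\d_k \cdot \d_1^{p_1-1}\cdots \d_t^{p_t-1}$ equals $\d_1^{p_1-1}\cdots \d_k^{p_k} \cdots \d_t^{p_t-1} = 0$. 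Hence $\d_k\Lambda' = 0 = \epsilon(\d_k)\Lambda'$, completing the verification.

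There is no significant obstacle here; the statement is essentially a direct computation that relies on (i) the invariance of the complete sum $\sum_i K^i$ under left multiplication by $K$, and (ii) the fact that $\Lambda = \d_1^{p_1-1}\cdots \d_t^{p_t-1}$ is already a top-degree element in the commutative truncated polynomial algebra $H_n$, so any further application of a $\d_k$ on the left annihilates it after passing the grouplike scalars through.
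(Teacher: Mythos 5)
Your proof is correct and takes the same route as the paper: the paper's proof simply states that it suffices to check the integral condition on generators and that the verification is then evident, while you have spelled out that reduction (the set of $h$ satisfying $h\Lambda'=\epsilon(h)\Lambda'$ is a subalgebra) and the two explicit generator computations. Nothing is missing.
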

\begin{proof}
We have to show that $h\Lambda'=\epsilon(h)\Lambda'$ for all $h\in H_n\rtimes \Bbbk C_N$. It suffices to check the property on generators, on which it is evident.
\end{proof}

Note that the trace map $\mathrm{Tr}$ from Lemma \ref{lem-Hn-Frobenius-Hopf} is related to $\Lambda$ in the following way. First, there is a non-degenerate Hopf pairing  $\langle~,~\rangle\colon (H_n\rtimes \Bbbk C_N)\otimes (H_n\rtimes \Bbbk C_N)\to \Bbbk$ obtained by extending the pairing $\langle ~,~\rangle$ from Remark \ref{nichols} via
\begin{equation}\langle \d^{\mathbf{a}}\otimes K^{i},\d^{\mathbf{b}}\otimes K^{j}\rangle=\langle\d^{\mathbf{a}} ,\d^{\mathbf{b}} \rangle q^{ij}.\end{equation}
Thus, $H_n\rtimes \Bbbk C_N$ is self-dual as a Hopf algebra. Following \cite{LS}, we obtain another, so-called \emph{right orthogonal}, pairing $(~,~)$ on $(H_n\rtimes \Bbbk C_N)\otimes (H_n\rtimes \Bbbk C_N)$ by the formula
$$ (\d^{\mathbf{a}}\otimes K^{i},\d^{\mathbf{b}}\otimes K^{j})=\sum_{\Lambda} \langle \d^{\mathbf{a}}\otimes K^{i},\Lambda_1\rangle\langle \d^{\mathbf{b}}\otimes K^{i},\Lambda_2\rangle=\langle \d^{\mathbf{a}-\mathbf{b}}\otimes K^{i-j},\Lambda\rangle.$$
Restricting $(~,~)$ to $H_n\otimes H_n$ gives the pairing given by $\mathrm{Tr}(\d^{\mathbf{a}}\cdot \d^{\mathbf{b}})$ which makes $H_n$ a Frobenius algebra.

\subsection{A spherical structure}
In this section we show that the category $\lmod{H_n}$, viewed as a monoidal category using the tensor product structure of $\lratmod{H_n\rtimes \Bbbk C_N}$, is a spherical monoidal category (cf. \cite{BW}*{Section 2} or \cite{EGNO}*{Section 4.7}). 

\begin{lemma}\label{sphericalhopf}
The element  $\omega=K^{-\sum_{k=1}^t n_k}$ satisfies the following properties: 
\begin{itemize}
\item[(i)] It is group like in the sense that
\begin{equation}\label{spherical1}
\Delta(\omega)=\omega\otimes \omega, \qquad S(\omega)=\omega^{-1}, \qquad \epsilon(\omega)=1.
\end{equation}
\item[(ii)] Conjugating by $\omega$ implements $S^2$. That is, for any $h\in H_n\rtimes \Bbbk C_N$,
\begin{equation}\label{spherical2}
S^2(h)=\omega h\omega^{-1}.
\end{equation} 
\end{itemize} 
\end{lemma}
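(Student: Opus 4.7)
The plan is to verify (i) directly from the fact that $\omega$ is an integer power of the grouplike element $K$, and then prove (ii) by checking the identity on a generating set of $H_n\rtimes \Bbbk C_N$. Since $S$ is an anti-algebra homomorphism, $S^2$ is an algebra homomorphism; conjugation by $\omega$ is also an algebra homomorphism. Therefore, to establish $S^2(h)=\omega h \omega^{-1}$ for all $h$, it suffices to check equality when $h$ ranges over the algebra generators $K, \d_1,\dots,\d_t$ listed in Lemma \ref{lem-bosonized-Hn}.

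For (i), since $\Delta(K)=K\otimes K$, $\epsilon(K)=1$, and $S(K)=K^{-1}$, all integer powers of $K$ are grouplike; in particular $\omega$ is, which gives the three identities in \eqref{spherical1}.

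For (ii), the case $h=K$ is immediate: $S^2(K)=S(K^{-1})=K$, and $\omega$ commutes with $K$. For $h=\d_k$, I will first compute $S^2(\d_k)$ using the antihomomorphism property and the formula $S(\d_k)=-K^{-n_k}\d_k$, obtaining
\[
S^2(\d_k)=-S(\d_k)S(K^{-n_k})=-(-K^{-n_k}\d_k)(K^{n_k})=K^{-n_k}\d_k K^{n_k}=\xi_k^{-n_k}\d_k=q^{-n_k^2}\d_k,
\]
where the second-to-last equality uses the relation $K\d_k=\xi_k\d_k K$ and the last uses $\xi_k=q^{n_k}$. Then I will compute $\omega\d_k\omega^{-1}=K^{-\sum_j n_j}\d_k K^{\sum_j n_j}=q^{-n_k\sum_j n_j}\d_k$ by iterated application of the same commutation relation.

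The comparison then hinges on the numerical observation that, for $j\neq k$, the product $n_kn_j=n^2/(p_kp_j)$ is a multiple of $N=n^2/m$, because $m/(p_kp_j)$ is a positive integer (as $p_k$ and $p_j$ are distinct prime factors of the radical $m=p_1\cdots p_t$). Hence $q^{n_kn_j}=1$ for $j\neq k$, so the exponent $-n_k\sum_j n_j$ reduces modulo $N$ to $-n_k^2$, giving $\omega\d_k\omega^{-1}=q^{-n_k^2}\d_k=S^2(\d_k)$, as required. The mild subtlety, and the only nontrivial point, is precisely this vanishing of off-diagonal contributions modulo $N$; everything else is bookkeeping with the Hopf-algebra structure of Lemma \ref{lem-bosonized-Hn}.
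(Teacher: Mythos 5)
Your proof is correct and is precisely the ``simple computation using Lemma \ref{lem-bosonized-Hn}'' that the paper cites without spelling out the details. The key reduction to generators via the algebra-homomorphism property of both $S^2$ and $\omega(\cdot)\omega^{-1}$ is standard, and the arithmetic point you identify — that $q^{n_k n_j}=1$ for $j\neq k$ because $N=n^2/m$ divides $n_k n_j=n^2/(p_k p_j)$ — is exactly the one fact that makes the exponents match.
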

\begin{proof}
This follows from a simple computation using Lemma \ref{lem-bosonized-Hn}.
\end{proof}

The lemma shows that $H_n\rtimes \Bbbk C_N$ is \emph{almost} a spherical Hopf algebra, with only condition (5) of \cite{BW}*{Definition~3.1} missing. However, working with the full subcategory $\lmod{H_n}$, this condition will always hold to give the following result:

\begin{proposition}
The monoidal category $\lmod{H_n}$ is a spherical category.
\end{proposition}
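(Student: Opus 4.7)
The plan is to verify axiom (5) of \cite{BW}*{Definition~3.1}, which is the only remaining condition after Lemma \ref{sphericalhopf}. Concretely, I must show that for every $V \in \lmod{H_n}$ and every $f \in \End_{H_n \rtimes \Bbbk C_N}(V)$, the left and right pivotal quantum traces determined by $\omega = K^{-\sum_k n_k}$ coincide. This condition fails on the ambient category $\lmod{H_n \rtimes \Bbbk C_N}$ of all graded modules, so the argument must use the extra rigidity provided by the rationality of the grading.

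The first step is to exploit the rational graded structure directly. For $V = \bigoplus_i V^i$, the pivot $\omega$ acts on $V^i$ as the scalar $q^{-i\sum_k n_k}$, and $\omega^{-1}$ as $q^{i\sum_k n_k}$. An endomorphism $f$ in $\lmod{H_n}$ respects the grading and intertwines all $\d_k$-actions, and therefore decomposes as $f = \bigoplus_i f_i$ with $f_i \in \End_\Bbbk(V^i)$. The two pivotal traces thus take the weighted-sum form
\[
\mathrm{tr}_L(f) = \sum_i q^{-i\sum_k n_k}\,\Tr(f_i), \qquad \mathrm{tr}_R(f) = \sum_i q^{+i\sum_k n_k}\,\Tr(f_i),
\]
and the goal is to establish their equality.

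For this equality, the main idea I would pursue is to leverage the monoidal equivalence of Proposition \ref{rational-graded-prop} with the category of $H_n$-module objects inside $\qvect$. On the braided side, the antipode of $H_n$ is involutive, $S_{H_n}^2 = \ide$, as is immediate from the values $S_{H_n}(\d_k) = -\d_k$ on the primitive generators. This allows one to choose a pivotal structure on the braided side for which the sphericality identity is realized transparently, reducing the problem to tracking how this identity is transported back to $\lmod{H_n}$ through the monoidal coherences of the equivalence. The main obstacle will be the explicit bookkeeping of the braiding contributions introduced by these coherences; these braiding twists must be shown to precisely account for the apparent asymmetry between the scalars $q^{-i\sum_k n_k}$ and $q^{+i\sum_k n_k}$ produced by the actions of $\omega$ and $\omega^{-1}$ on graded pieces, thereby forcing the two trace sums above to agree.
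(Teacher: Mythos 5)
Your proposal takes a genuinely different — and ultimately unfinished — route from the paper. The paper's proof is a one-liner: since $\omega = K^{-\sum_k n_k}$ acts by the scalar $q^{-i\sum_k n_k}$ on each graded piece $V^i$ and any degree-zero morphism $\theta$ preserves the grading, one has $\omega\theta=\theta\omega$ for all $\theta\in\End_{H_n}(V)$; this commutation is exactly what Barrett--Westbury's Theorem~3.6 requires when applied to the subcategory of rational graded modules, so the conclusion is immediate. Nothing about the braided equivalence with $H_n$-modules in $\qvect$, the involutivity of the braided antipode, or the monoidal coherences enters the paper's argument at all.

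Your plan correctly identifies that condition (5) of \cite{BW}*{Definition~3.1} is what must be verified, and your first step (reading off the scalar action of $\omega$ and $\omega^{-1}$ on graded pieces) agrees with the paper. But from there you diverge: you set up the two weighted sums
\[
\mathrm{tr}_L(f)=\sum_i q^{-i\sum_k n_k}\Tr(f_i), \qquad \mathrm{tr}_R(f)=\sum_i q^{+i\sum_k n_k}\Tr(f_i),
\]
and then state that their equality should come out of carefully tracking the braiding twists when transporting sphericality across the monoidal equivalence $\cP_H$. This is exactly where the proposal has a gap: you explicitly flag the reconciliation of the $q^{\pm i\sum_k n_k}$ factors as the ``main obstacle'' and never overcome it. As written, this is a plan that stops precisely at the nontrivial step, not a proof. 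Compare with the paper, which does not attempt any such explicit trace computation or coherence bookkeeping; the whole content is the simple observation $\omega\theta=\theta\omega$ feeding into \cite{BW}*{Theorem~3.6}. You should either carry out the transport argument in full (showing concretely how the braiding factors cancel the apparent asymmetry between $\omega$ and $\omega^{-1}$), or replace your plan with the paper's much more economical route via the commutation observation.
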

\begin{proof}
This follows using \cite{BW}*{Theorem~3.6}. In fact, the conditions from Lemma \ref{sphericalhopf} give that $\lomod{H_n\rtimes \Bbbk C_N}$ is a pivotal category \cite{BW}*{Definition 2.1}. We observe that for $V$ a rational graded $H_n$-module, $\omega$ acts by 
\begin{align*}
\omega\cdot v&=q^{-i(\sum_{k=1}^t n_k)}v, &&\forall v\in V^i.
\end{align*}
Thus, for any graded morphism $\theta\colon V\to V$ of $H_n$-modules, $\omega \theta=\theta\omega$. This shows that $\lratmod{H_n\rtimes \Bbbk C_N}$ is a spherical category.
\end{proof}

\subsection{A weak replacement for the braiding}

In general, the category $\lmod{H_n\rtimes \Bbbk C_N}$ and its subcategory $\lmod{H_n}$ are not braided monoidal. This agrees with the observation of \cite{Bic}*{Proposition 5} that the category of $n$-complexes is not braided monoidal (unless $q=q^{-1}$). 

A further observation is that, as an algebra, $H_n$ does not depend on the parameter $q$. The coproduct and $H_n$-module structure, however, are dependent on $q$, manifested in the use of the braiding in $\qvect$. For any choice of a primitive $N$th root of unity, we have two different coproducts on $H_n\rtimes \Bbbk C_N$  --- the coproduct $\Delta=\Delta_q$ from Lemma \ref{lem-bosonized-Hn}, and its opposite coproduct $\Delta^{\oop}=\Delta_q^{\oop}$. The tensor product obtained from the former is denoted by $\otimes=\otimes_q$ for the purpose of this section. Note the symmetry that
\begin{align*}
\Delta_q(\d_k)&=\d_k\otimes 1+K^{n_k}\otimes \d_k, \\
\Delta_{q^{-1}}(\d_k)&=\d_k\otimes 1+K^{-n_k}\otimes \d_k,
\end{align*}
are distinct coproducts for bosonizations of $H_n$, utilizing $\otimes_q$ or $\otimes_{q^{-1}}$, respectively.

In this section, we describe a weaker symmetry that is present in place of a quasi-triangular structure on $H_n\rtimes \Bbbk C_N$. A quasi-triangular structure would give natural isomorphisms $V\otimes W\cong W\otimes V.$
Instead, we obtain the following. 

\begin{proposition}\label{weakbraiding}
There are natural isomorphisms of graded $H_n$-modules
$$\Psi_{V,W}\colon V\otimes_q W\cong W\otimes_{q^{-1}} V, \quad \quad \Psi_{V,W}(v\otimes w)= q^{-ij}w\otimes v$$
where $v\in V^i$, $w\in W^j$ are homogeneous elements.
\end{proposition}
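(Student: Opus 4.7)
The plan is to establish the claim in four steps, of which only the last has any real content.

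First, I would observe that $\Psi_{V,W}$ is manifestly well-defined on homogeneous tensors and grading-preserving, since $v\otimes w$ and $w\otimes v$ both sit in total degree $i+j$ of their respective tensor products. Second, I would exhibit the two-sided inverse
\[
\Psi_{V,W}^{-1}(w\otimes v) = q^{ij}\, v\otimes w,
\]
which settles bijectivity. Third, naturality is automatic: any morphism of graded $H_n$-modules preserves homogeneous components, so the scalar $q^{-\deg(v)\deg(w)}$ transforms correctly under $f\otimes g$ versus $g\otimes f$.

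The substantive step is $H_n$-linearity. Since $H_n$ is generated as an algebra by $\d_1,\ldots,\d_t$, it suffices to check compatibility with each $\d_k$. The source and target actions are controlled by the two coproducts
\[
\Delta_q(\d_k) = \d_k\otimes 1 + K^{n_k}\otimes \d_k, \qquad \Delta_{q^{-1}}(\d_k) = \d_k\otimes 1 + K^{-n_k}\otimes \d_k,
\]
given by Lemma~\ref{lem-bosonized-Hn}, together with $K\cdot u = q^{\deg(u)} u$. I would therefore compute, for homogeneous $v\in V^i$ and $w\in W^j$, both sides of the equation
\[
\Psi_{V,W}\bigl(\d_k\cdot_q (v\otimes w)\bigr) \;=\; \d_k\cdot_{q^{-1}}\Psi_{V,W}(v\otimes w),
\]
and verify that each side equals $q^{-ij}\,\d_k w\otimes v + q^{-(i+n_k)j}\, w\otimes \d_k v$. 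Compatibility with $K$ (which I would record for completeness, even though it is not strictly demanded by the statement) is a one-line check: both sides produce a factor $q^{i+j}q^{-ij}$.

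The main obstacle is purely combinatorial bookkeeping of $q$-exponents: one has to see that the power of $q$ generated by the braiding on the shifted degree (i.e.\ $(i+n_k)j$ or $i(j+n_k)$) precisely accounts for the $K^{\pm n_k}$ that appears in the coproducts. The identities $-(i+n_k)j = -ij - n_k j$ and $n_k i - i(j+n_k) = -ij$ are what makes the two sides collapse to the common expression above. Because the verification is required only on the algebra generators $\d_k$, and because the coproducts agree on $K$, no further case analysis (in particular, no check on higher powers $\d_k^a$) is needed: linearity and the algebra relations of $H_n$ propagate the identity to all of $H_n$.
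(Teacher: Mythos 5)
Your proof is correct and is exactly the ``direct computation'' that the paper's proof acknowledges in its first sentence but does not spell out; the paper then switches to a ``more intrinsic'' argument. Concretely, the paper observes that the universal $R$-matrix of the Hopf subalgebra $\Bbbk C_N \subset H_n\rtimes\Bbbk C_N$ is a gauge transformation, and computes that the Drinfeld twist $\Delta_q^R = R^{-*}\Delta_q R$ agrees with $\Delta^{\mathrm{op}}_{q^{-1}}$ on the generators $\d_k$; the isomorphism $\Psi_{V,W}$ then comes for free as the natural isomorphism associated to a Drinfeld twist, with no exponent chase at all. Your direct verification is sound: the reduction to the algebra generators $\d_1,\ldots,\d_t$ is justified because a linear map intertwines an algebra action if and only if it intertwines the action of a generating set, and the exponent identities you record ($-(i+n_k)j=-ij-n_kj$ and $n_k i - i(j+n_k)=-ij$) do indeed make both sides collapse to $q^{-ij}\,\d_k w\otimes v + q^{-(i+n_k)j}\,w\otimes\d_k v$. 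You also correctly keep the $K$-action as $K\cdot u = q^{\deg(u)}u$ on both sides (this is the point one could trip over: $\otimes_{q^{-1}}$ changes the coproduct formula but not the underlying algebra action, exactly because it is a Drinfeld twist), so there is no gap. What the paper's Drinfeld-twist route buys is a conceptual explanation for why the isomorphism exists and how it measures the failure of a genuine quasi-triangular structure, which a bare calculation does not reveal; your route is shorter and self-contained. Either is a valid proof.
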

\begin{proof}
The proposition can be checked by a direct computation that $\Psi_{V,W}$ intertwines with the action of the $H_n$ generators $\d_k$, $k=1,\dots, t$. More intrinsically, consider the universal R-matrix $R$ for $\Bbbk C_N$ from equation \eqref{Rmatrixgroup}. Now, $R$ is a right $2$-cycle for $\Bbbk C_N$, and also for $H_n\rtimes \Bbbk C_N$ which contains  $\Bbbk C_N$ as a Hopf subalgebra. Hence, we can consider the \emph{Drinfeld twist} $\Delta^R_q=R^{-\ast} \Delta_q R$ of the coproduct of $H_n\rtimes\Bbbk C_N$ \cite{Dri2}. We compute that
\begin{align*}
\Delta^R_q(\d_k)&=\sum_{i,j,a,b}q^{ij-ab}K^i \d_k K^a\otimes K^j K^b+ \sum_{i,j,a,b}q^{ij-ab}K^i K^{n_k}K^a\otimes K^j \d_kK^b\\
&=\sum_{i,j,a,b}q^{i(j+n_k)-ab}\d_k K^{i+a}\otimes K^{j+b}+ \sum_{i,j,a,b}q^{(i+n_k)j-ab}K^{i+n_k+a}\otimes \d_k K^{j+b}\\
&=\sum_{i,j,a,b}\left(q^{ij-ab}\d_k K^{i+a}\otimes K^{j-n_k+b}+ q^{ij-ab}K^{i+a}\otimes \d_k K^{j+b}\right)\\
&=\left(\d_k\otimes K^{-n_k}+1\otimes \d_k\right)\sum_{i,j,a,b}q^{ij-ab}K^{i+a}\otimes K^{j+b}\\&=\left(\d_k\otimes K^{-n_k}+1\otimes \d_k\right)=\Delta^{\oop}_{q^{-1}}(\d_k).
\end{align*}
The result follows.
\end{proof}

%%%%%%%%%%%%%%%%%%%%%%%%%%%%%%%%%%%%%%%%%%%%%%%%%%%%%%%%%%%%%%%%%%%%%%%%%%%%%%%%%%%%%%%%%%%%%%%%%%%%%%%%%%%%%%%%

\section{A tensor ideal in \texorpdfstring{$\lmod{H_n}$}{Hn-mod}}\label{sec-tensorideal}

\subsection{The category of \texorpdfstring{$H_n$}{Hn}-modules}\label{Hnmodules}

We use the same notation as in the previous sections, and work with the category $\lmod{H_n}$ of finite-dimensional graded $H_n$-modules. This category has internal homs $\Hom^\bullet(V,W)\cong V^*\otimes W $. The differential $\d_k\in H_n$ acts on an element $f\colon V\to W$, for $v$ homogeneous of degree $i$, by 
\begin{align}\label{homaction}
(\d_k\cdot f)(v)= \xi_k^{-i}(\d_kf(v)-f(\d_kv)),
\end{align}
as in equation \eqref{internalhom}\footnote{Note that this formula differs slightly from \cite{Kap}*{equation (1.14)}. A formula similar to that of Kapranov is obtained by using the alternative internal hom from Remark \ref{alternative-int-hom}. In this case, we would obtain $(\d_kf)(v)=d_kf(v)-\xi_k^{\deg(f)}f(\d_k v)$. The results of this section apply using either convention.}, where $\xi_k=q^{n_k}$. Hence $\d_k \cdot f=0$ if and only if $f(\d_k v)=\d_k f(v)$ for all $v\in V$. In particular, a linear map is graded $H_n$-invariant if and only if it is of degree zero and commutes with all differentials.  In this way, the category of $H_n$-modules is enriched over itself.

As $H_n$ is naturally a $\mZ$-graded algebra, we have the grading shift functors on $\lmod{H_n}$
$$\shift{k}\colon \lmod{H_n}\longrightarrow \lmod{H_n}$$
for all $k\in \mZ$.
Equivalently, consider the modules $\Bbbk\shift{\pm 1}$, which are one-dimensional over $\Bbbk$, with generators $\one$ sitting respectively in $\mZ$-degrees $\mp 1$. Then $V\shift{\pm 1}\cong V\otimes \Bbbk\shift{ \pm 1}$. Indeed, for all $v_i\in V_i$,
\begin{align*}
\d_k  (v_i\otimes \one)&=(\d_kv_i)\otimes \one.
\end{align*}
This shows that the isomorphism $V\otimes \Bbbk\shift{\pm 1}\to V\shift{\pm 1}$ sending $v_i\otimes \one$ to $v_i$ commutes with the $\d_k$-action, for $v_i\otimes \one$ has degree $i\mp 1$.

\begin{lemma}\label{swapiso}
For any two $H_n$-modules $V, W$, there are natural isomorphisms of $H_n$-modules 
$$V\otimes (W\shift{\pm 1})\cong (V\otimes W)\shift{\pm 1}\cong (V\shift{ \pm 1})\otimes W. $$
\end{lemma}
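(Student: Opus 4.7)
My plan is to reduce both isomorphisms to the identification $V\shift{\pm 1}\cong V\otimes \Bbbk\shift{\pm 1}$ established in the paragraph preceding the lemma, and to verify $H_n$-linearity directly from the explicit coproduct formula
\[
\d_k(v\otimes w)=\d_k v\otimes w+\xi_k^{\deg_V(v)}\,v\otimes \d_k w.
\]

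For the first isomorphism $V\otimes(W\shift{\pm 1})\cong (V\otimes W)\shift{\pm 1}$, I would take the \emph{identity} map on the underlying vector space $V\otimes W$. Both sides carry the same grading on homogeneous tensors, namely $\deg(v)+\deg(w)\mp 1$, so the map has degree zero; and since shifting does not alter the $H_n$-action, the formula above gives the same action on either side (the exponent $\xi_k^{\deg_V(v)}$ uses the unshifted $V$-degree in both cases). Equivalently, this follows by the chain
\[
V\otimes(W\shift{\pm 1})\;\cong\; V\otimes(W\otimes \Bbbk\shift{\pm 1})\;\cong\;(V\otimes W)\otimes \Bbbk\shift{\pm 1}\;\cong\;(V\otimes W)\shift{\pm 1},
\]
using associativity of the monoidal structure on $\lmod{H_n}$.

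For the second isomorphism $(V\otimes W)\shift{\pm 1}\cong (V\shift{\pm 1})\otimes W$, the identity map is \emph{not} $H_n$-linear: on the right-hand side the exponent in the coproduct formula becomes $\xi_k^{\deg_V(v)\mp 1}$, differing from the left-hand side by a factor $\xi_k^{\mp 1}=q^{\mp n_k}$. To compensate, I would introduce the rescaling
\[
\phi\colon (V\otimes W)\shift{\pm 1}\longrightarrow (V\shift{\pm 1})\otimes W,\qquad \phi(v\otimes w):=q^{\mp \deg_W(w)}\,v\otimes w
\]
for homogeneous tensors and extended linearly. This is manifestly a degree-preserving $\Bbbk$-linear isomorphism, and $H_n$-linearity reduces to the identity
\[
\xi_k^{\deg_V(v)}\cdot q^{\mp n_k}=\xi_k^{\deg_V(v)\mp 1},
\]
which holds because $\xi_k=q^{n_k}$. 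Naturality in $V$ and $W$ is immediate since $\phi$ is defined by a scalar depending only on $\deg_W(w)$.

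The mild technical obstacle is the absence of a genuine braiding on $\lmod{H_n}$, which in general obstructs the exchange of tensor factors; the weak swap of Proposition \ref{weakbraiding} even changes $q$ to $q^{-1}$. The present lemma is tractable precisely because one of the factors is the one-dimensional module $\Bbbk\shift{\pm 1}$, on which all $\d_k$ act trivially, so the only obstruction to swapping is a single scalar correction by a power of $q$, exactly what $\phi$ supplies.
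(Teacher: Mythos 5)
Your proof is correct and follows essentially the same route as the paper's: the first isomorphism is handled via $W\shift{\pm 1}\cong W\otimes\Bbbk\shift{\pm 1}$ and associativity, and the second requires a rescaling by a power of $q$ to compensate for the shifted degree, which works precisely because $\xi_k=q^{n_k}$. The only packaging difference is that you write the corrective scalar $q^{\mp\deg_W(w)}$ directly on $(V\otimes W)\shift{\pm 1}$, whereas the paper first reduces, via associativity, to the swap $\Bbbk\shift{\pm 1}\otimes W\cong W\otimes\Bbbk\shift{\pm 1}$ and writes the same scalar there; unwinding the paper's chain of isomorphisms produces exactly your map $\phi$.
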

\begin{proof}
We show the $\shift{1}$ case. Since $W\shift{1}\cong W\otimes \Bbbk\shift{1}$, the first isomorphism is easy. To establish the second isomorphism, we consider the isomorphisms of $H_n$-modules
\[
V\shift{1}\otimes W \cong  (V\otimes \Bbbk\shift{1})\otimes W \cong  V\otimes (\Bbbk\shift{1}\otimes W ),
\]
which reduces the problem to showing that $\Bbbk\shift{1}\otimes W\cong W\otimes \Bbbk\shift{1}$.

Denote by $\one$ a generator of $\Bbbk\shift{1}$ which lives in degree $-1$. We define the map $r_V$, for any homogeneous $v_i \in V_i$,  by
\begin{align*}
r_V(v_i\otimes \one):= q^{-i} \one \otimes v_i.
\end{align*}
It follows that, for any $k=1,\ldots, t$,
\begin{align*}
\d_k(r_V(v_i\otimes \one))&=q^{-i}\d_k(\one \otimes v_i)= q^{-i}\xi_k^{-1}\one \otimes \d_k v_i\\&=q^{-i-n_k}\one \otimes \d_kv_i=r_V(\d_kv_i\otimes \one )=r_V(\d_k(v_i\otimes \one)),
\end{align*}
proving that $r_V$ is a morphism of $H_n$-modules. Naturality is clear as any morphism $f\colon V\to W$ of $H_n$-modules preserves the grading, and hence
\begin{align*}
r_W(f(v_i)\otimes \one)=q^{-i}(\one \otimes f(v_i))=\one \otimes f(q^{-i}v_i)=(\ide\otimes f)r_V(v_i\otimes \one).
\end{align*}
The grading shift $\shift{-1}$ is similar, and one just replaces $q$ by $q^{-1}$ in the above computations.
\end{proof}

\begin{corollary}\label{gradingshift}
Let $V$, $W$ be $H_n$-modules. For any $k\in \mZ$, there are isomorphisms of $H_n$-modules
\begin{gather}
(V\shift{k})\otimes W\cong (V\otimes W)\shift{k}\cong V\otimes (W\shift{k}),\label{gradingiso1}\\
\Hom^\bullet(V\shift{-k},W)\cong \Hom^\bullet(V,W\shift{k})\cong \Hom^\bullet(V,W)\shift{k}.\label{gradingiso2}
\end{gather}
\end{corollary}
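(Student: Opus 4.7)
The plan is to obtain both displayed isomorphisms as iterated consequences of Lemma \ref{swapiso}, together with the internal-Hom identification $\Hom^\bullet(V,W)\cong V^*\otimes W$ of equation (\ref{internalhom_dual}).

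For the tensor-product isomorphisms (\ref{gradingiso1}), I will argue by induction on $|k|$. The case $k=0$ is trivial and the case $k=\pm 1$ is precisely Lemma \ref{swapiso}. For $|k|\geq 2$ I will write $(V\otimes W)\shift{k}\cong ((V\otimes W)\shift{k-1})\shift{\pm 1}$ (with the sign matching that of $k$), apply the inductive hypothesis to move the inner shift onto $V$ or $W$, and then invoke the base case one more time. Each step is a natural isomorphism of $H_n$-modules, so the resulting composition is natural in $V$ and $W$.

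For the Hom isomorphisms (\ref{gradingiso2}), I will first record that graded duality reverses the sign of grading shifts. Using $(M\shift{j})^i=M^{i+j}$ and $(M^*)^i=\Hom^0_\Bbbk(M^{-i},\Bbbk)$ from Section \ref{notation-sect}, a direct computation gives $(V\shift{-k})^*\cong V^*\shift{k}$ as graded vector spaces, and this identification is $H_n$-equivariant since the action formula \eqref{internalhom} on either side only sees the underlying module $V$. Combining this with equation (\ref{internalhom_dual}) and the tensor-product case just proved, I will then chain together
\[
\Hom^\bullet(V\shift{-k},W)\cong (V\shift{-k})^*\otimes W\cong (V^*\shift{k})\otimes W\cong (V^*\otimes W)\shift{k}\cong \Hom^\bullet(V,W)\shift{k},
\]
and similarly
\[
\Hom^\bullet(V,W\shift{k})\cong V^*\otimes (W\shift{k})\cong (V^*\otimes W)\shift{k}\cong \Hom^\bullet(V,W)\shift{k}.
\]

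The only subtle point, which I expect to be the main (though modest) obstacle, is pinning down the dualization identity $(V\shift{-k})^*\cong V^*\shift{k}$ with the correct sign convention; once that is in place, the corollary reduces to a purely mechanical combination of Lemma \ref{swapiso} with the internal-Hom identification already established, and no further structural input is required.
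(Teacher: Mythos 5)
Your proposal is correct and follows essentially the same route as the paper: both prove \eqref{gradingiso1} by iterating Lemma \ref{swapiso}, and both derive \eqref{gradingiso2} by combining equation \eqref{internalhom_dual} with the already-established tensor-shift compatibility. The only cosmetic difference is that you verify $(V\shift{-k})^*\cong V^*\shift{k}$ directly from the grading conventions and the action formula, whereas the paper routes through $(\Bbbk\shift{-k}\otimes V)^*\cong V^*\otimes \Bbbk\shift{-k}^*$ using that duals reverse tensor order; either is fine, and your equivariance check is valid since the $H_n$-action on $M^*$ does not reference the grading of $M$.
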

\begin{proof}
The first equation \eqref{gradingiso1} is a repeated application of the previous Lemma \ref{swapiso}.

Using equation \eqref{internalhom_dual} and the first part of the corollary, we have the chain of isomorphisms of $H_n$-modules
\begin{align*}
\Hom^\bullet(V\shift{-k},W)&\cong(V\shift{-k})^* \otimes W \cong (\Bbbk\shift{-k}\otimes V)^*\otimes W \\
&\cong ( V^*\otimes\Bbbk\shift{-k}^*)\otimes W \cong V^*\otimes( \Bbbk\shift{k} \otimes W) \\&\cong  V^*\otimes W\shift{k}\cong \Hom^\bullet(V,W\shift{k}).
\end{align*}
The last isomorphism in the second equality \eqref{gradingiso2} is established in a similar way. 
\end{proof}

As a special case, we can consider $n=p^a$. 
In this case, we can fully classify indecomposable modules over $H_{n}$. Any indecomposable $H_n$-module is isomorphic to a grading shift of a quotient module $H_n/(\d_1^l)$, for $l=0,\ldots, p_1-1$. Such a simple classification is not possible in the presence of more than two distinct prime factors in $n$.

\subsection{The tensor ideals \texorpdfstring{$\bfI_k$}{Ik}}
Once again, fix a positive integer $n$ and its prime decomposition $n=p_1^{a_1}\cdots p_t^{a_t}$, and let us consider the category $\lmod{H_n}$ of finite-dimensional graded modules over the braided Hopf algebra $H_n$.

The braided Hopf algebra $H_n$ has many useful Hopf subalgebras. For each prime factor $p_k$, let us consider two complementary Hopf subalgebras inside $H_n$:
\begin{equation}
H_n^k:=\dfrac{\Bbbk[\d_k]}{(\d_k^{p_k})},\quad \quad \quad
\widehat{H}_n^k:=\dfrac{\Bbbk[\d_1,\dots, \widehat{\d_k},\dots \d_t]}{(\d_1^{p_1},\dots, \widehat{\d_k^{p_k}},\dots \d_t^{p_t})}.
\end{equation}
Here the ``hatted'' terms in the second equation are dropped from the expressions. Each $\d_k$ has degree $n_k:=n/p_k$. If $t=1$, i.e., $n=p_1^{a_1}$ is a prime power, we shall not consider $\widehat{H}_n^1$, and $H_n^1=H_n$.

We record the following simple observation.

\begin{lemma}\label{indproj}
The left regular module is, up to isomorphism and grading shift, the only indecomposable projective-injective $H_n$-module. Its graded dimension equals
\[
\mathrm{dim}_\nu(H_n)=\prod_{k=1}^t(1+\nu^{n_k}+\cdots+\nu^{(p_k-1)n_k})=\prod_{k=1}^t\dfrac{1-\nu^n}{1-\nu^{n_k}}.
\]
\end{lemma}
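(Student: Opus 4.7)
The plan is to establish two claims: (a) $H_n$ is a graded local ring with unique simple module (up to shift), and (b) use the Frobenius property to translate this into a classification of indecomposable projective-injective objects via the standard simple-projective bijection.

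First, I would observe that $H_n$ is a local graded $\Bbbk$-algebra. As an algebra (forgetting the braided coproduct), $H_n$ is just the tensor product $\bigotimes_{k=1}^t \Bbbk[\d_k]/(\d_k^{p_k})$ of graded commutative local rings, each with unique maximal ideal $(\d_k)$ and residue field $\Bbbk$. Hence $H_n$ has a unique maximal homogeneous ideal $J=(\d_1,\ldots,\d_t)$ with $H_n/J\cong\Bbbk$ concentrated in degree zero. In particular, the only idempotents in $H_n$ are $0$ and $1$, so $H_n$ is indecomposable as a graded module over itself, and the only simple graded $H_n$-module (up to grading shift) is the trivial module $\Bbbk$.

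Next, I would invoke that $H_n$ is graded Frobenius (Lemma \ref{lem-Hn-Frobenius-Hopf}), hence graded self-injective, so projective and injective graded $H_n$-modules coincide. By the standard theory for finite-dimensional graded self-injective algebras, there is a bijection (up to grading shift) between isomorphism classes of simple modules and isomorphism classes of indecomposable projective-injective modules, sending a simple $L$ to its projective cover $P(L)$. Since the only simple graded module is $\Bbbk$ up to shift, its projective cover must be $H_n$ itself (the epimorphism $H_n\twoheadrightarrow H_n/J\cong\Bbbk$ is the projective cover because $J$ is the radical). Thus every indecomposable projective-injective graded $H_n$-module is isomorphic to $H_n\shift{k}$ for some $k\in\mZ$.

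For the graded dimension, I would use the monomial basis $\{\d_1^{a_1}\cdots\d_t^{a_t}\mid 0\leq a_k\leq p_k-1,\ k=1,\ldots,t\}$ of $H_n$, noting that the monomial $\d_1^{a_1}\cdots\d_t^{a_t}$ has degree $\sum_k a_k n_k$. Summing gives
\[
\dim_\nu(H_n)=\prod_{k=1}^t\sum_{a_k=0}^{p_k-1}\nu^{a_k n_k}=\prod_{k=1}^t\frac{1-\nu^{p_k n_k}}{1-\nu^{n_k}}=\prod_{k=1}^t\frac{1-\nu^n}{1-\nu^{n_k}},
\]
since $p_k n_k=n$ for each $k$. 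No step here is really an obstacle; the only mild care required is to distinguish between the ungraded/graded versions of the local-ring argument, but since $J$ is a homogeneous ideal and $H_n/J$ is a graded simple module, the graded version of Krull--Schmidt and projective-cover theory applies verbatim.
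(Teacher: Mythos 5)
Your proposal is correct and follows essentially the same line as the paper's proof, which simply notes that $H_n$ is a graded Frobenius local algebra (citing Lemma~\ref{lem-Hn-Frobenius-Hopf}) and declares the dimension count an easy exercise. You have merely filled in the routine details the paper leaves implicit (the locality argument via the tensor-product decomposition, the simple--projective bijection for self-injective algebras, and the monomial-basis computation), all of which are sound.
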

\begin{proof}
This follows since $H_n$ is a graded Frobenius local algebra (Lemma \ref{lem-Hn-Frobenius-Hopf}), and thus is graded self-injective. The graded dimension computation is an easy exercise.
\end{proof}

\begin{definition}\label{bfIk-def}
For each prime factor $p_k$ of $n$, we define a $p_k$-dimensional graded $H_n$-module $V_k$ by
\[
V_k:=\mathrm{Ind}_{\widehat{H}_n^k}^{H_n}(\Bbbk)\cong H_n \otimes_{\widehat{H}_n^k}\Bbbk.
\]

Further, if $t>1$, we denote 
\[
W_k:=\mathrm{Ind}_{H_n^k}^{H_n}(\Bbbk)\cong H_n \otimes_{H_n^k}\Bbbk,
\]
which is an $m_k$-dimensional $H_n$-module.
\end{definition}

Observe that the $H_n$-module $V_{k}$ is a $p_k$-fold extension of the trivial $H_n$-module $\Bbbk$ by itself:
\begin{equation*}
\xymatrix{
\Bbbk\ar[r]^-{\d_k}&\Bbbk\shift{-n_k}\ar[r]^-{\d_k}&\cdots\ar[r]^-{\d_k}&\Bbbk\shift{(2-p_k)n_k}\ar[r]^-{\d_k}&\Bbbk\shift{(1-p_k)n_k}.}
\end{equation*}
We further observe that $V_k$ is isomorphic as an $H_n$-module, up to grading shift, to the submodule of $H_n$ generated by the element $\d_{1}^{p_{1}-1}\cdots \widehat{\d_{k}^{p_k-1}}\cdots\d_{t}^{p_{t}-1}$. Similarly, $W_k$ is isomorphic to a grading shift of the submodule generated by $\d_k^{p_k-1}$. It follows similarly to Lemma \ref{indproj} that 
\begin{align}
\mathrm{dim}_\nu(V_k)&=\dfrac{1-\nu^n}{1-\nu^{n_k}}, &\mathrm{dim}_\nu(W_k)&=\prod_{l\neq k}\dfrac{1-\nu^n}{1-\nu^{n_l}}.
\end{align}
The module $V_k$ is free when viewed as an $H_n^k$-module, while $W_k$ is free as an $\widehat{H}_n^k$ module. In particular, $W_k$ is free as an $H_n^l$-module for all $l\neq k$.

%%%%%%%%%%%%%%%%%%%%%%%

\begin{definition}\label{def-Ik}
 Assume that $t>1$. For any $k=1,\ldots, t$, we let $\bfI_k$ be the full subcategory of modules in $\lmod{H_n}$ consisting of direct summands of $H_n$-modules $V$ of the following form:
\begin{enumerate}
\item[(i)] $V$ is equipped with a finite-step filtration by $H_n$-submodules: $0=F_{0}\subset F_1\subset F_2\subset \dots \subset F_r=V$.
\item[(ii)] Each of the subquotient modules $F_i/F_{i-1}$ ($i=1,\dots, r$) is isomorphic to $W_k$ up to a grading shift.
\end{enumerate}
If $t=1$, so that $n=p_1^{a_1}$, we denote $\bfI_1:=\bfI_{H_n}$, the full subcategory of graded projective-injective $H_n$-modules, cf. Section \ref{stable-sect}.
\end{definition}

\begin{lemma}\label{lem-ext-closed-Ik}
The ideal $\bfI_k$ is closed under extensions. More precisely, if $U$, $V$ and $W$ fit into a short exact sequence of $H_n$-modules
\[
0\longrightarrow U\longrightarrow W \stackrel{\pi}{\longrightarrow} V\longrightarrow 0
\]
with $U,V$ being in $ \bfI_k$, then $W$ also lies in $\bfI_k$.
\end{lemma}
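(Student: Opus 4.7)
The plan is to build a witnessing filtration for $W$ by concatenating the filtrations on $U$ and on $V$, pulling the latter back through the surjection $\pi$. This is the standard pattern for showing a class of modules cut out by a filtration condition is closed under extensions.

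Concretely, by hypothesis we may choose filtrations
\[
0 = U_0 \subset U_1 \subset \cdots \subset U_s = U, \qquad 0 = V_0 \subset V_1 \subset \cdots \subset V_r = V
\]
of $H_n$-modules whose successive quotients are each isomorphic to a grading shift of $V_k$. Identifying $U$ with its image in $W$ under the first map of the short exact sequence, I would define a chain of $H_n$-submodules of $W$ by
\[
W_i := U_i \quad (0 \le i \le s), \qquad W_{s+j} := \pi^{-1}(V_j) \quad (0 \le j \le r).
\]
Since $W_s = U = \ker \pi = \pi^{-1}(V_0) = W_{s+0}$, these two pieces fit together into a single increasing filtration $0 = W_0 \subset W_1 \subset \cdots \subset W_{s+r} = W$.

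It then remains to check that each subquotient is a grading shift of $V_k$. For $1 \le i \le s$, one has $W_i/W_{i-1} = U_i/U_{i-1}$, which is a shift of $V_k$ by hypothesis. For $1 \le j \le r$, the restriction of $\pi$ to $\pi^{-1}(V_j)$ is a surjection onto $V_j$ with kernel $U$, and the same holds for $\pi^{-1}(V_{j-1})$ onto $V_{j-1}$; hence by the third isomorphism theorem applied in $\lmod{H_n}$,
\[
W_{s+j}/W_{s+j-1} = \pi^{-1}(V_j)/\pi^{-1}(V_{j-1}) \cong V_j/V_{j-1},
\]
which is again a grading shift of $V_k$. Thus $W$ carries a filtration of the type required in Definition~\ref{def-Ik}, and so $W \in \bfI_k$.

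There is no real obstacle here: the argument is purely formal and uses only the definition of $\bfI_k$, the exactness of pullback along $\pi$, and the standard isomorphism theorems for $H_n$-modules. The only point worth verifying carefully is the compatibility $W_s = W_{s+0}$ when splicing the two filtrations, which follows immediately from exactness of the original sequence.
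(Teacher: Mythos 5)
Your proof is correct and matches the paper's argument exactly: both splice the given filtration on $U$ with the $\pi$-preimage of the filtration on $V$, using the isomorphism $\pi^{-1}(V_j)/\pi^{-1}(V_{j-1}) \cong V_j/V_{j-1}$ to verify the subquotients. You have simply spelled out the details more fully than the paper does.
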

\begin{proof} The case when $t=1$ is clear, so we assume $t>1$.
Assume given such a short exact sequence of $H_n$-modules such that $U'$, $V'$ be $H_n$-modules satisfying that $U\oplus U'$ and $V\oplus V'$ are equipped with filtrations $F_1\subset \dots \subset F_r$ and $F^\prime_1\subset \dots \subset F^\prime_s$ as in Definition \ref{def-Ik}. Then we have a short exact sequence
$$0\to U\oplus U'\to W\oplus U'\oplus V'\to V\oplus V'\to 0,$$
and  $W\oplus U'\oplus V'$ is equipped with a filtration
\[
0\subset F_1\subset \dots \subset F_r=U \subset \pi^{-1}(F^\prime_1)\subset \dots \subset \pi^{-1}(F^\prime_s)=W,
\]
which satisfies the hypothesis of Definition \ref{def-Ik}. Hence $W$, as a direct summand of $W\oplus U'\oplus V'$, is contained in $\bfI_k$.
\end{proof}

\begin{lemma}\label{lem-action-closed-Ik}
The ideal $\bfI_k$ is closed under forming duals and tensor products with arbitrary objects in $\lmod{H_n}$. Consequently, $\bfI_k$ is a two-sided tensor ideal in $\lmod{H_n}$.
\end{lemma}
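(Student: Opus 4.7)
The strategy is to reduce every closure property to an explicit statement about the single module $V_k$. A first observation, used silently throughout, is that $\bfI_k$ is closed under grading shifts: shifting every term of a filtration witnessing $V\in \bfI_k$ produces a filtration witnessing $V\shift{d}\in \bfI_k$.

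For closure under duals, I would compute directly that $V_k^{*}\cong V_k\shift{(p_k-1)n_k}$. In the dual basis $\{v_0^{*},\dots,v_{p_k-1}^{*}\}$, the formula $(\d_j\cdot f)(v)=f(S^{-1}(\d_j)v)$ shows that $\d_j$ for $j\neq k$ still acts by zero (since $S^{-1}(\d_j)$ is a scalar multiple of $\d_j$), while $\d_k$ acts as a nonzero nilpotent operator of order exactly $p_k$; hence $V_k^{*}\in \bfI_k$. For a general $V\in \bfI_k$ with filtration $F_\bullet$, the orthogonal filtration $F_\bullet^{\perp}$ on $V^{*}$ has subquotients $(F_i/F_{i-1})^{*}\cong V_k^{*}\shift{-d_i}$, each of which lies in $\bfI_k$; closure under extensions (Lemma \ref{lem-ext-closed-Ik}) then gives $V^{*}\in \bfI_k$.

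For the tensor-ideal property, the goal is to show that $M\otimes V$ and $V\otimes M$ lie in $\bfI_k$ for any $V\in \bfI_k$ and arbitrary $M\in \lmod{H_n}$. Using the filtration defining $V\in\bfI_k$, extension closure, and Corollary \ref{gradingshift}, the problem reduces to proving $M\otimes V_k\in \bfI_k$ and $V_k\otimes M\in \bfI_k$ for arbitrary $M$. The central tool is the projection formula for the Hopf subalgebra inclusion $\widehat{H}_n^k\rtimes\Bbbk C_N \hookrightarrow H_n\rtimes\Bbbk C_N$: since $V_k\cong \mathrm{Ind}_{\widehat{H}_n^k}^{H_n}(\Bbbk)$, one obtains
\[
M \otimes V_k \;\cong\; \mathrm{Ind}_{\widehat{H}_n^k}^{H_n}\bigl(\mathrm{Res}\,M\bigr),
\]
together with a symmetric isomorphism for $V_k\otimes M$. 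Now $\widehat{H}_n^k$ is a finite-dimensional graded local algebra, so $\mathrm{Res}\,M$ admits a composition series in $\lmod{\widehat{H}_n^k}$ whose subquotients are grading shifts of the trivial module $\Bbbk$. Moreover, the factorization $H_n\cong H_n^k\otimes\widehat{H}_n^k$ exhibits $H_n$ as a free right $\widehat{H}_n^k$-module of rank $p_k$, so induction is exact. Applying $\mathrm{Ind}_{\widehat{H}_n^k}^{H_n}$ to the composition series yields a filtration of $M\otimes V_k$ with subquotients $\mathrm{Ind}(\Bbbk\shift{d_i})\cong V_k\shift{d_i}$, proving $M\otimes V_k\in \bfI_k$.

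The main technical point to verify is the projection formula in this braided setting. Passing to the bosonization via Proposition \ref{rational-graded-prop} converts it into the standard Hopf-algebraic identity for the ordinary Hopf subalgebra inclusion $\widehat{H}_n^k\rtimes\Bbbk C_N\hookrightarrow H_n\rtimes\Bbbk C_N$, so no genuinely braided machinery is needed beyond this translation.
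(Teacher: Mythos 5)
Your proof is correct, and the dual-filtration argument matches the paper's (you just make the explicit check $V_k^{*}\cong V_k\shift{(p_k-1)n_k}$ that the paper leaves to the reader). For the tensor-ideal property, however, you take a genuinely different route. The paper's proof for $V\otimes U$, with $V\in\bfI_k$ and $U$ arbitrary, is a short induction on $\dim(U)$: pick a simple $\Bbbk\shift{s}$ in the socle of $U$ (possible since $H_n$ is graded local), tensor the short exact sequence $0\to\Bbbk\shift{s}\to U\to\bar U\to 0$ with $V$, and apply extension closure (Lemma \ref{lem-ext-closed-Ik}) plus the inductive hypothesis. You instead first reduce to $V=V_k$ using the filtration defining $\bfI_k$, and then invoke the projection formula $M\otimes V_k\cong\mathrm{Ind}_{\widehat{H}_n^k}^{H_n}(\mathrm{Res}\,M)$ for the Hopf subalgebra inclusion (after translating through the bosonization to stay in the realm of ordinary Hopf algebras), combined with freeness of $H_n$ over $\widehat{H}_n^k$ and the existence of a composition series for $\mathrm{Res}\,M$ with grading-shifted trivial subquotients. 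Both arguments are valid and both ultimately appeal to extension closure. The paper's induction is more elementary and shorter; your projection-formula argument is more structural, in that it identifies exactly which grading-shifted copies of $V_k$ appear in $M\otimes V_k$ (namely, those indexed by the graded composition factors of $\mathrm{Res}\,M$), and it also makes manifest that $M\otimes V_k\cong V_k\otimes M$, which is not otherwise obvious in the absence of a braiding. One small caution on the projection formula in the braided setting: the tensor identity $M\otimes\mathrm{Ind}_B^A(V)\cong\mathrm{Ind}_B^A(\mathrm{Res}\,M\otimes V)$ requires a bijective antipode, which is automatic here since all Hopf algebras in play are finite-dimensional; you rightly route everything through the bosonization so no genuinely braided version of the tensor identity is needed.
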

\begin{proof} The case $t=1$ follows from \cite{Kh}*{Proposition 2}. Hence, we assume $t>1$.
If $V$ is a direct summand of an object $W$ of $\bfI_k$ with a filtration $F_\bullet$, then $W^*$ is equipped with the dual filtration $F_\bullet^*$, which is readily checked to satisfy the conditions of Definition \ref{def-Ik}. Hence $V^*$, as a direct summand of $W^*$, is an object in $\bfI_k$.

Suppose $V\in \bfI_k$ and $U$ is any $H_n$-module. The module $U$ has a nontrivial socle since $H_n$ is a graded local algebra. Choose $\Bbbk\shift{s}$ lying inside the socle of $U$, which gives us a short exact sequence of $H_n$-modules
\[
0\longrightarrow \Bbbk\shift{s} \longrightarrow U  \longrightarrow \bar{U} \longrightarrow 0.
\]
Tensoring, for instance, on the left with $V$, we obtain
\[
0\longrightarrow V\shift{s} \longrightarrow V\otimes U  \longrightarrow V\otimes \bar{U} \longrightarrow 0.
\]
By induction on $\mathrm{dim}(U)$, we may assume that $V\otimes \bar{U}\in \bfI_k$ (the case $\mathrm{dim}(U)=1$ is the assumption that $V\in \bfI_k$). Now the previous lemma applies and shows that $V\otimes U\in \bfI_k$.
\end{proof}

It follows that the internal homs also preserve the ideals $\bfI_k$.

\begin{corollary}\label{cor-hom-closed-Ik}
Let $U$ be an $H_n$-module in the ideal $\bfI_k$ and $V$ be an arbitrary finite-dimensional $H_n$-module. Then both $\Hom^\bullet(U,V)$ and $\Hom^\bullet(V,U)$ are objects of $\bfI_k$.
\end{corollary}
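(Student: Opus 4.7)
The plan is to reduce this corollary directly to the tensor and dual closure properties already established in Lemma \ref{lem-action-closed-Ik}. The main input is the natural isomorphism of $H_n$-modules
$$\Hom^\bullet(M,N) \;\cong\; M^*\otimes N$$
recorded in \eqref{internalhom_dual}. This identification holds because $H_n$ is a Frobenius Hopf algebra object in the relevant braided category (Lemma \ref{lem-Hn-Frobenius-Hopf}, cf.\ the remark at the end of Section \ref{stable-hopf-sect}), so duals and internal homs are compatibly $H_n$-equivariant, not merely isomorphic as underlying graded vector spaces.

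To handle $\Hom^\bullet(U,V)$, I would take $M=U$ and $N=V$ in this identification to get $\Hom^\bullet(U,V)\cong U^*\otimes V$. By Lemma \ref{lem-action-closed-Ik}, the ideal $\bfI_k$ is closed under taking duals, so $U^*\in \bfI_k$. The same lemma asserts that tensoring any object of $\bfI_k$ by an arbitrary finite-dimensional $H_n$-module stays inside $\bfI_k$; applying this with the $\bfI_k$-object $U^*$ and the arbitrary module $V$ yields $U^*\otimes V\in \bfI_k$, and therefore $\Hom^\bullet(U,V)\in \bfI_k$.

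For $\Hom^\bullet(V,U)$, the same reasoning gives $\Hom^\bullet(V,U)\cong V^*\otimes U$. Here $U\in \bfI_k$ while $V^*$ is arbitrary, so the tensor closure half of Lemma \ref{lem-action-closed-Ik} again places $V^*\otimes U$, and hence $\Hom^\bullet(V,U)$, in $\bfI_k$.

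With the scaffolding already in place, I do not anticipate a genuine obstacle: the statement is a purely formal repackaging of Lemma \ref{lem-action-closed-Ik} via \eqref{internalhom_dual}. The only point that merits a sentence of justification is the $H_n$-linearity (rather than just $\Bbbk$-linearity) of the identification $\Hom^\bullet(M,N)\cong M^*\otimes N$ in the braided setting; but this is exactly what Section \ref{stable-hopf-sect} provides for Frobenius Hopf algebra objects, as reiterated in the remark at the end of that section.
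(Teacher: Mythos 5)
Your proof is correct and follows essentially the same route as the paper: invoke the $H_n$-module isomorphism $\Hom^\bullet(M,N)\cong M^*\otimes N$ from \eqref{internalhom_dual} and then apply the dual- and tensor-closure of $\bfI_k$ established in Lemma \ref{lem-action-closed-Ik}. You simply spell out the two cases and the $H_n$-equivariance point a bit more explicitly than the paper does.
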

\begin{proof}
This follows from Lemma \ref{lem-action-closed-Ik} and the isomorphism of graded $H_n$-modules $\Hom^\bullet(U,V)\cong  U^*\otimes V$ from equation \eqref{internalhom_dual}.
\end{proof}

\begin{remark}
We note that the category $\bfI_k$ is the smallest subcategory of $\lmod{H_n}$ closed under grading shifts, extensions, and direct summands that contains the objects $W_k$. 
We conjecture that any object in $\bfI_k$ in fact has a filtration as in Definition \ref{def-Ik}.
\end{remark}

\begin{comment}Replaced this lemma.
\begin{lemma}\label{lem-summands-closed-Ik}
The ideal $\bfI_k$ is closed under taking direct summands. That is, if $W\in \bfI_k$ and $W\cong U\oplus V$, then both $U$ and $V$ belong to $\bfI_k$. 
\end{lemma}
\begin{proof}Let $0\subset F_1\subset \dots \subset F_r=W$ be a fitration satisfying condition (ii) of Definition \ref{def-Ik}. It suffices to show that both $U$ and $V$ can also be equipped with such filtrations. We do this for $U$. Denote by $\pi_U$ the canonical projection map from $W$ onto $U$. 

We note that $U$ is again free as a $\widehat{H}_n^k$-module. Let $w_1,\ldots, w_r$ be homogeneous elements in $W$ such that $w_i$ generates $F_i/F_{i-1}$. Then the $v_i$ are a basis for $W$ as a $\widehat{H}_n^k$-module. Thus, $u_i:=\pi_U(w_i)$ generate $U$ as a $\widehat{H}_n^k$-module. 

Let $\fr{m}=(\d_1,\ldots,\widehat{\d_k},\ldots, \d_t)$ be the maximal ideal in $\widehat{H}_n^k$. Then the images of $u_1,\ldots, u_r$ generate $U/\fr{m}U$ as a $\Bbbk$-vector space. We can choose a subset $u'_1,\ldots, u'_s$ which is a basis. Here, we omit $u_i$ if it lays in the span of $u_1,\ldots, u_{i-1}$. Using that $U$ is a graded free $\widehat{H}_n^k$-module, it follows that $u'_1,\ldots, u'_s$ form a basis for $U$ as a $\widehat{H}_n^k$.

We define $F_i^U:=\op{Span}_{H_n}(u'_1,\ldots, u'_i)$. By construction, each $F_i^U$ is a $H_n$-submodule of $U$ and $F_i^U/F_{i-1}^U$ is a free rank one $H_n^k$-module.  Thus, $U$ is an object in $\bfI_k$. 
%Since $U$ is free as a $\widehat{H}_n^k$-module, it is generated by the graded $\Bbbk$-complement of the subspace 
%$$U_+=\bigl\{ m\bigm| m=h n, \text{ for some $n\in M$, $h\in \widehat{H}_n^k$ with $\deg h\geq 1$}\bigr\}.$$
%%%OLD We endow $U$ with the quotient filtration. It follows that for each $i=1,\ldots, r$, $\pi_U$ restricts to a morphism of $H_n$-modules $F_i/F_{i-1}\twoheadrightarrow \pi_U(F_i)/\pi_U(F_{i-1})$. Hence, any subquotient factor in this filtration on $U$ is a quotient of a shift of $V_k$, and, in particular, is indecomposable as an $H_n^k$-module.
%%%OLD Now, since $V_k$ is projective-injective as an $H_n^k$-module, we see that $F_i\cong F_{i-1} \oplus V_k\shift{b_i}$ as $H_n^k$-modules for some $b_i\in \mZ$. Therefore, when restricting the direct sum decomposition $W\cong U\oplus V$ to one of $H_n^k$-modules, $V_k\shift{b_i}$ projects to either zero or a direct summand of $U$ as $H_n^k$-modules, by the Krull--Schmidt property for $\lmod{H_n^k}$.
%{\color{blue} By restricting the map $\pi_U\oplus \pi_V$ we see that $V_k\shift{b_i}$ maps injectively to a submodule of the sum of indecomposable modules $\pi_U(F_i/F_{i-1})\oplus\pi_V(F_i/F_{i-1})$.} 
%%%OLD In the non-zero case, $\pi_U(F_i)/\pi_U(F_{i-1})$ is isomorphic to $V_k\{b_i\}$ as an $H_n^k$-module. This shows that the quotient map $F_i/F_{i-1}\twoheadrightarrow \pi_U(F_i)/\pi_U(F_{i-1})$ is either an isomorphism or zero. The lemma follows.
\end{proof}
\end{comment}

\begin{lemma}\label{lem-proj-inj-closed-Ik}
The class of projective-injective objects of $\lmod{H_n}$ is contained in each $\bfI_k$, for $k=1,\dots, t$.
\end{lemma}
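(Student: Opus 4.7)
The plan is to reduce to showing that the left regular module $H_n$ itself lies in $\bfI_k$. By Lemma \ref{indproj} together with the Krull--Schmidt property for graded finite-dimensional $H_n$-modules, every projective-injective graded $H_n$-module is a finite direct sum of grading shifts of $H_n$. Direct sums are split extensions, and $\bfI_k$ is closed under extensions by Lemma \ref{lem-ext-closed-Ik}. Moreover, $\bfI_k$ is manifestly closed under grading shifts, since shifting the terms of any filtration satisfying Definition \ref{def-Ik} yields another such filtration. These observations reduce the claim to producing a single filtration witnessing $H_n \in \bfI_k$.

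To construct this filtration, I would exploit the tensor factorization
\[
H_n \;\cong\; \widehat{H}_n^k \otimes_\Bbbk H_n^k,
\]
which is simultaneously an isomorphism of graded commutative $\Bbbk$-algebras and of graded $(\widehat{H}_n^k, H_n^k)$-bimodules, because the subalgebras $\widehat{H}_n^k$ and $H_n^k$ are generated by commuting variables and together span $H_n$. The key observation is that $\widehat{H}_n^k$ is graded local, so its left regular module admits a composition series
\[
0 = G_0 \subset G_1 \subset \cdots \subset G_R = \widehat{H}_n^k
\]
by graded $\widehat{H}_n^k$-submodules whose subquotients satisfy $G_i/G_{i-1} \cong \Bbbk\shift{d_i}$ for suitable $d_i \in \mZ$. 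Setting $F_i := G_i \cdot H_n^k \subseteq H_n$, each $F_i$ is a graded $H_n$-submodule: the $\widehat{H}_n^k$-action preserves $G_i$, the $H_n^k$-action preserves $H_n^k$, and the two subalgebras commute inside the commutative algebra $H_n$.

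The last step is to identify the subquotients of $F_\bullet$ with grading shifts of $V_k$. Since multiplication gives an isomorphism $\widehat{H}_n^k \otimes_\Bbbk H_n^k \isomorph H_n$ of graded vector spaces,
\[
F_i / F_{i-1} \;\cong\; (G_i/G_{i-1}) \otimes_\Bbbk H_n^k \;\cong\; \Bbbk\shift{d_i}\otimes_\Bbbk H_n^k
\]
as graded $H_n$-modules, where on the right the generators $\d_j$ with $j \neq k$ act trivially on the first factor and $\d_k$ acts by ordinary multiplication on the second. Comparing this presentation with $V_k = \mathrm{Ind}_{\widehat{H}_n^k}^{H_n}(\Bbbk)$ from Definition \ref{bfIk-def} identifies the right-hand side with $V_k\shift{d_i}$, completing the verification of conditions (i) and (ii) of Definition \ref{def-Ik}. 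The argument is essentially bookkeeping; there is no genuine obstacle beyond a careful tracking of the grading shifts $d_i$ coming from the composition series of $\widehat{H}_n^k$.
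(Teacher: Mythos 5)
Your proof is correct and follows essentially the same route as the paper's: the paper's one-line argument is that $H_n=\mathrm{Ind}_{\widehat{H}_n^k}^{H_n}\widehat{H}_n^k$ and $\widehat{H}_n^k$ is an iterated extension of grading-shifted trivial modules, which is exactly your filtration $F_i = G_i\cdot H_n^k$ rewritten via induction (your identification $F_i/F_{i-1}\cong V_k\shift{d_i}$ is the exactness of $\mathrm{Ind}$ applied to the composition series of $\widehat{H}_n^k$). You simply make explicit the preliminary reduction to the left regular module, which the paper leaves implicit via Lemma \ref{indproj}.
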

\begin{proof}
 This follows since we have 
\begin{equation}
H_n=\mathrm{Ind}_{H_n^k}^{H_n}H_n^k,
\end{equation}
and the  regular $H_n^k$-module is an iterated extension of grading shifts of the trivial $H_n^k$-module.
\end{proof}

\begin{example}\label{example1}
Let $n=2^a\cdot 3^b$, with $a,b\geq 1$. Then $\d_1$ raises degrees by $n_1=2^{a-1}3^b$, and  $\d_2$ raises degrees by $n_2=2^a3^{b-1}$. We note that $V_k=W_k$ in the case of only two distinct prime factors.
Let us consider the following module $V$ with the non-zero differential acting by identity maps indicated on the arrows:
\begin{align*}
V: \quad \quad
\begin{gathered}
\xymatrix{
&\Bbbk\ar[d]^{\d_1}\ar[r]^-{\d_2}&\Bbbk\shift{-n_2}\ar[r]^-{\d_2}\ar[d]^{\d_1}&\Bbbk\shift{-2n_2}\\
\Bbbk\shift{n_2-n_1}\ar[r]^-{\d_2}&\Bbbk\shift{-n_1}\ar[r]^-{\d_2}&\Bbbk\shift{-n_1-n_2}&
}
\end{gathered} \ .
\end{align*} 
The module $V$ is contained in the ideal $\bfI_2$ (note that $p_2=3$ here).  Note that $V$ does not split as a direct sum of shifts of $W_{2}$, but we see that there is a short exact sequence of $H_n$-modules
$$0\longrightarrow W_{2}\shift{n_2-n_1}\longrightarrow V\longrightarrow W_{2}\longrightarrow 0.$$

If $n=2^a\cdot 3^b\cdot 5^c$, there exist various non-split extensions in $\bfI_1$. For example, consider the module $W$, where we omit the degree shifts,
\begin{align*}
W: \quad \quad
\begin{gathered}
\xymatrix@!0{
&&&&& \Bbbk  \ar[rr]^-{\d_3}& & \Bbbk \ar[rr]^-{\d_3}& & \Bbbk \ar[rr]^-{\d_3}& & \Bbbk  \ar[rr]^-{\d_3}&&\Bbbk\\
&&&& \Bbbk  \ar[ur]^-{\d_2}\ar[rr]^-{\d_3}\ar'[d][dd]^-{\d_1}& & \Bbbk \ar[ur]^-{\d_2}\ar[rr]^-{\d_3}\ar'[d][dd]^-{\d_1}& & \Bbbk\ar[ur]^-{\d_2} \ar[rr]^-{\d_3}\ar'[d][dd]^-{\d_1}& & \Bbbk \ar[ur]^-{\d_2} \ar[rr]^-{\d_3}\ar'[d][dd]^-{\d_1}&&\Bbbk\ar[ur]^-{\d_2}\\
&&&\Bbbk\ar[ur]^-{\d_2}\ar[rr]\ar[dd]& & \Bbbk\ar[ur] \ar[dd]\ar[rr]& & \Bbbk\ar[ur] \ar[dd]\ar[rr]& & \Bbbk\ar[dd]\ar[ur]\ar[rr]&&\Bbbk\ar[ur]_-{\d_2}\\
&&\Bbbk\ar'[r][rr] && \Bbbk\ar'[r][rr]  && \Bbbk\ar'[r][rr] && \Bbbk\ar'[r][rr]  & & \Bbbk  &&\\
&\Bbbk\ar[ur]^-{\d_2}\ar[rr]^-{\d_3} &&\Bbbk\ar[ur]\ar[rr]^-{\d_3}&&\Bbbk \ar[ur]\ar[rr]^-{\d_3}&&\Bbbk\ar[ur]\ar[rr]^-{\d_3}& & \Bbbk\ar[ur]_-{\d_2} &&&\\
\Bbbk\ar[ur]^-{\d_2}\ar[rr]^-{\d_3} &&\Bbbk\ar[ur]\ar[rr]^-{\d_3}&&\Bbbk \ar[ur]\ar[rr]^-{\d_3}&&\Bbbk\ar[ur]\ar[rr]^-{\d_3}& & \Bbbk\ar[ur]_-{\d_2} &&&&
}
\end{gathered}
\end{align*}
The $H_n$-module $W$ is free over $H_n^1$ and fits into a non-split short exact sequence
$$0\longrightarrow W_{2}\shift{-n_1+n_2+n_3}\longrightarrow W\longrightarrow W_{2}\longrightarrow 0.$$
\end{example}

%Two copies of the module $V_2$ with grading shifts also admit other non-trivial extensions as above:
%\begin{align*}
%V^\prime: \quad \quad
%\begin{gathered}
%\xymatrix{
%\Bbbk\ar[d]^{\d_1}\ar[r]^-{\d_2}&\Bbbk\shift{-2k}\ar[r]^-{\d_2}\ar[d]^{\d_1}&\Bbbk\shift{-4k}\ar[d]^-{\d_1}\\
%\Bbbk\shift{-3k}\ar[r]^-{\d_2}&\Bbbk\shift{-5k}\ar[r]^-{\d_2}&\Bbbk\shift{-7k}
%}
%\end{gathered} \  ,
%\end{align*}
%\begin{align*}
%V^{\prime \prime}: \quad \quad
%\begin{gathered}
%\xymatrix{
%&&\Bbbk\ar[d]^{\d_1}\ar[r]^-{\d_2}&\Bbbk\shift{-2k}\ar[r]^-{\d_2}&\Bbbk\shift{-4k}\\
%\Bbbk\shift{k}\ar[r]^-{\d_2}&\Bbbk\shift{-k}\ar[r]^-{\d_2}&\Bbbk\shift{-3k}&&
%}
%\end{gathered} \ .
%\end{align*}
%They fit respectively into short exact sequences:
%\[
%0\longrightarrow V_2\shift{-3k}\longrightarrow V^\prime \longrightarrow V_2\longrightarrow 0, \quad \quad
%0\longrightarrow V_2\shift{k}\longrightarrow V^{\prime \prime} \longrightarrow V_2\longrightarrow 0.
%\]
%showing that they belong to $\bfI_2$.

%%%%%%%%%%%%%%%%%%%%%%%%%%%%%%%%%%%%%%%
\subsection{The tensor ideal \texorpdfstring{$\bfI$}{I}}
%%%%%%%%%%%%%%%%%%%%%%%%%%%%%%%%%%%%%%%

In order to capture rings of cyclotomic integers via categorification, we shall work with a larger ideal $\bfI$ in $\lmod{H_n}$ than that of projective-injective objects and containing each $\bfI_k$. This can be thought of as a type of ``sum'' of the ideals $\bfI_k$.

\begin{definition}\label{defnI}
Let $\bfI$ be the full subcategory of $\lmod{H_n}$ which consists of objects $U=\bigoplus_{k=1}^t U_k$, where $U_k$ is an object in $\bfI_k$. 
%Further, we denote by $\un{\bfI}$ the image of $\bfI$ in the stable module category $\slmod{H_n}$.
\end{definition}

\begin{lemma}\label{lem-ext-closed}
The ideal $\bfI$ is closed under grading shifts, forming duals, and taking tensor products with arbitrary objects of $\lmod{H_n}$. Consequently, $\bfI$ is a two-sided tensor ideal in $\lmod{H_n}$.
\end{lemma}
\begin{proof}
This is a consequence of Lemma \ref{lem-action-closed-Ik}.
\end{proof}

\begin{corollary}
Let $U$ be an $H_n$-module in the ideal $\bfI$ and $V$ be an arbitrary finite-dimensional $H_n$-module. Then both $\Hom^\bullet(U,V)$ and $\Hom^\bullet(V,U)$ are objects of $\bfI$.
\end{corollary}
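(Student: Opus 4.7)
The plan is to mimic the proof of Corollary \ref{cor-hom-closed-Ik}, with $\bfI_k$ replaced by $\bfI$, since the key ingredients used there have already been upgraded to $\bfI$ in Lemma \ref{lem-ext-closed}.

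First, I would invoke the natural isomorphism of graded $H_n$-modules
\[
\Hom^\bullet(X,Y) \cong X^* \otimes Y
\]
from equation \eqref{internalhom_dual}, applied to the two cases $(X,Y) = (U,V)$ and $(X,Y) = (V,U)$. This reduces the claim to showing that both $U^* \otimes V$ and $V^* \otimes U$ lie in $\bfI$.

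Next, I would apply Lemma \ref{lem-ext-closed}. Since $U \in \bfI$ and $\bfI$ is closed under dualization, we have $U^* \in \bfI$. Since $\bfI$ is a two-sided tensor ideal in $\lmod{H_n}$, tensoring with the arbitrary module $V$ on either side preserves membership in $\bfI$. This gives $U^* \otimes V \in \bfI$ and $V^* \otimes U \in \bfI$, completing the argument.

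There is essentially no obstacle here; the work has already been done in establishing that $\bfI$ is closed under duals and tensor products with arbitrary $H_n$-modules (Lemma \ref{lem-ext-closed}), which in turn was built out of the corresponding statements for the ideals $\bfI_k$ together with the extension-closure in Definition \ref{defnI}. The corollary is simply the internal-hom incarnation of the tensor-ideal property, exactly as in the single-prime case of Corollary \ref{cor-hom-closed-Ik}.
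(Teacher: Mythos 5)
Your argument is correct and is essentially identical to the paper's proof, which likewise cites Lemma \ref{lem-ext-closed} together with the isomorphism $\Hom^\bullet(U,V)\cong U^*\otimes V$ from equation \eqref{internalhom_dual}. The only difference is that you spell out the two cases a bit more explicitly, which is fine.
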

\begin{proof}
This follows from Lemma \ref{lem-ext-closed} and the isomorphism $\Hom^\bullet(U,V)\cong U^*\otimes V $ of $H_n$-modules from equation \eqref{internalhom_dual}.
\end{proof}

\begin{lemma}\label{lem-summands-closed}
The ideal $\bfI$ is closed under taking direct summands. That is, if $W\in \bfI$ and $W\cong U\oplus V$, then both $U$ and $V$ belong to $\bfI$. 
\end{lemma}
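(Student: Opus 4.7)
The plan is to adapt the proof of Lemma \ref{lem-summands-closed-Ik} to the present setting, applying the key Krull--Schmidt argument prime by prime using the various subalgebras $H_n^k \subset H_n$ for $k \in \{1,\dots,t\}$. I fix a filtration $0 = F_0 \subset F_1 \subset \cdots \subset F_r = W$ satisfying Definition \ref{defnI}, refined using the Krull--Schmidt property so that each subquotient satisfies $F_i/F_{i-1} \cong V_{k_i}\shift{a_i}$ for some $k_i \in \{1,\dots,t\}$ and $a_i \in \mZ$. Let $\pi_U\colon W \to U$ denote the canonical projection along $V$, and endow $U$ with the quotient filtration $G_i := \pi_U(F_i)$. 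It suffices to show that each subquotient $G_i/G_{i-1}$ is either zero or isomorphic to $V_{k_i}\shift{a_i}$ as an $H_n$-module; discarding the zero steps then yields a filtration exhibiting $U \in \bfI$, and the argument for $V$ proceeds symmetrically.

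At the $i$-th step, I specialize to the Frobenius local subalgebra $H_n^{k_i} = \Bbbk[\d_{k_i}]/(\d_{k_i}^{p_{k_i}})$, whose regular module $V_{k_i}$ is indecomposable and projective-injective in $\lmod{H_n^{k_i}}$. Projectivity of $V_{k_i}$ forces the short exact sequence $0 \to F_{i-1} \to F_i \to V_{k_i}\shift{a_i} \to 0$ to split after restriction to $\lmod{H_n^{k_i}}$, producing an $H_n^{k_i}$-submodule $M_i \subset F_i$ with $F_i = F_{i-1} \oplus M_i$ and $M_i \cong V_{k_i}\shift{a_i}$. Injectivity of $V_{k_i}$ in $\lmod{H_n^{k_i}}$ then makes $M_i$ a direct summand of $W$ as an $H_n^{k_i}$-module.

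Following the argument of Lemma \ref{lem-summands-closed-Ik}, I apply the Krull--Schmidt theorem in $\lmod{H_n^{k_i}}$ to the decomposition $W = U \oplus V$: the indecomposable summand $M_i$ projects under $\pi_U$ either to zero or onto a direct summand of $U$ isomorphic to $V_{k_i}\shift{a_i}$ as an $H_n^{k_i}$-module. In the zero case, $G_i/G_{i-1} = 0$. In the non-zero case, the canonical $H_n$-linear surjection $F_i/F_{i-1} \twoheadrightarrow G_i/G_{i-1}$ is a $\Bbbk$-linear bijection between modules of equal $\Bbbk$-dimension $p_{k_i}$, and is therefore an isomorphism of $H_n$-modules. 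In either alternative, $G_i/G_{i-1}$ is zero or lies in $\bfI_{k_i} \subseteq \bfI$, as required.

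The main subtle point, inherited directly from the proof of Lemma \ref{lem-summands-closed-Ik}, is the Krull--Schmidt step: one must verify that the specific indecomposable $H_n^{k_i}$-summand $M_i$ of $W$ projects injectively onto a summand of $U$ (or lies entirely in $V$), rather than producing a proper intermediate quotient in both summands. The only new ingredient compared with that lemma is that the relevant subalgebra $H_n^{k_i}$ varies with the filtration index $i$; since every $H_n^k$ is Frobenius local with indecomposable projective-injective regular module $V_k$, the argument applies uniformly at each step, and the proof of closure under direct summands goes through as in the single-prime case.
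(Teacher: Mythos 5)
Your proof is essentially the paper's proof written out in full: the paper's own proof of Lemma~\ref{lem-summands-closed} simply defers to the argument of Lemma~\ref{lem-summands-closed-Ik}, noting only that the relevant Hopf subalgebra becomes $H_n^{k_i}$ depending on the filtration index $i$, which is exactly the substitution you carry out. You also correctly identify the Krull--Schmidt step over the local self-injective algebra $H_n^{k_i}$ as the only delicate point, matching the paper's own handling.
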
 
\begin{proof}
This follows from the fact that $\lmod{H_n}$ has the Krull-Schmidt property.
\end{proof}

The ideal $\bfI$ is not closed under extensions. However, its image in the stable category $\slmod{H_n}$ will possess the two-out-of-three property (see Lemma \ref{2of3-lemma}) based on the following proposition which generalizes \cite{Mir}*{Theorem 3.5} in our setup.

\begin{proposition}\label{ortho-prop}
Let $p_k$ and $p_l$ be distinct prime factors of $n$.
Let $V$ be an object in $\bfI_k$ and $W$ an object in $\bfI_l$. Then $\Hom^\bullet_{\slmod{H_n}}(V,W)\subset \bfI^\bullet_{H_n}(V,W)$. That is, all $H_n$-morphisms from $V$ to $W$ are null-homotopic.
\end{proposition}
\begin{proof}
We first show the statement for $V=W_k$ and $W=W_l$, $k\neq l$. According to Theorem~\ref{thm-hom-as-stable-invariant},
$$\Hom^\bullet_{\slmod{H_n}}(W_k,W_l)=\frac{(W_k^*\otimes W_l)^{H_n}}{\Lambda\cdot (W_k^*\otimes W_l)}.$$
Since $k\neq l$, we can equip $W_k$ with a filtration of $H_n$-modules whose successive quotients are grading shifts of the module $V_l$. Hence $W_k^*$ also has such a filtration. Next, we observe that 
the tensor product $V_l\shift{s}\otimes W_l$ is free over $H_n$. Inductively, it follows from the exactness of $\otimes$ that $W_k^*\otimes W_l$ has a (split) resolution by free $H_n$-modules and is hence free. Therefore, $\Lambda\cdot (W_k^*\otimes W_l)=(W_k^*\otimes W_l)^{H_n}$ and we have shown that $\Hom^\bullet_{\slmod{H_n}}(W_k,W_l)=\{0\}$.

Using Corollary \ref{gradingshift}, we can replace $W_k$, $W_l$ by grading shifts. Thus, the statement holds for all modules $V$ in $\bfI_k$ and $W$ in $\bfI_l$ that have filtrations as in Definition \ref{def-Ik}. If $U_V$ is a direct summand of $V$ and $U_W$ a direct summand of $W$ and $f\colon U_V\to U_W$ an $H_n$-module morphism. Then $f$ extends by zeros to a $H_n$-morphism $V\to W$, which is null homotopic by the above. Hence, $f$ is also null-homotopic, and the statement is proved for general objects in $\bfI_k$ and $\bfI_l$.
\end{proof}

%The category $\lmod{H_n}$ has the Krull--Schmidt property. Therefore, we may always refine the filtrations in Definition \ref{defnI} so that the subquotient modules $F_i/F_{i-1}$, $i=1\dots, r$, in condition (ii) are isomorphic to $V_{k_i}$ up to grading shift, for some $k_i\in \{1,\dots, t\}$.
%
%%For two $H_n$-modules $V$ and $W$, we again denote $\bfI(V,W):=\bfI\cap \Hom_{H_n}(V,W)$.
%%We will also say that an $H_n$-module $V$ belongs to the ideal $\bfI$ if it satisfies the conditions of the definition (so that its identity morphism belongs to $\bfI$).
%
%
%\begin{example}
%An example of an object $V$ in $\bfI$ for e.g. $n=2\cdot 3\cdot 5\cdot k$, so that $p_1=2$, $p_2=3$, $p_3=5$, is given by
%\begin{align*}\resizebox{1\hsize}{!}{
%\xymatrix{
%&&&&\Bbbk\shift{n_1}\ar[d]^{\d_1}\ar[r]^-{\d_2}&\Bbbk\shift{n_1-n_2}\ar[r]^-{\d_2}&\Bbbk\shift{n_1-2n_2}\\
%\Bbbk\shift{4n_3}\ar[r]^{\d_3}&\Bbbk\shift{3n_3}\ar[r]^-{\d_3}&\Bbbk\shift{2n_3}\ar[r]^-{\d_3}&\Bbbk\shift{n_3}\ar[r]^-{\d_3}&\Bbbk&&
%}}
%\end{align*}
%Here, $F_1\cong V_{3}\shift{4n_3}$, and $F_2/F_1\cong V_{2}\shift{n_1}$.
%\end{example}

%
%\begin{proof}
%This follows by an argument similar to that of Lemma \ref{lem-summands-closed-Ik}. The only difference is that, at each step $i$ as in that proof, we resort to a different Hopf subalgebra $H_n^{k_i}$ depending on the filtration of $W$ given.
%\end{proof}
%}

\section{Categorifying cyclotomic rings}\label{sec-catfn}
In this section, we construct a tensor triangulated category $\mathcal{O}_n$, whose Grothendieck ring is isomorphic to the cyclotomic ring $\mathbb{O}_n$ at an $n$th root of unity.

\subsection{A triangulated quotient category}
Consider the stable category $\slmod{H_n}$ from Section \ref{stablesect} which is tensor triangulated. Let us denote  by $\un{\bfI}$ the full subcategory consisting of objects that are isomorphic to those of $\bfI$ under the natural quotient functor $\lmod{H_n}\longrightarrow \slmod{H_n}$. Thus $\underline{\bfI}$ is a strictly full subcategory of $\slmod{H_n}$. Our first goal is to show that $\un{\bfI}$ is a thick triangulated subcategory in $\slmod{H_n}$. To do this we first exhibit some preparatory results.

\begin{lemma}\label{extensionlemma}
The subcategory $\underline{\bfI}$ is closed under the tensor product action by $\slmod{H_n}$. More precisely, if $U$ is an object of $\underline{\bfI}$ and $V\in \slmod{H_n}$, then both $V\otimes U$ and $U\otimes V$ are in $\underline{\bfI}$. Consequently, $\underline{\bfI}$ constitutes a tensor ideal in $\slmod{H_n}$.
\end{lemma}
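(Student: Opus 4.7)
The plan is to reduce the stable-category assertion to the abelian-category version already established in Lemma \ref{lem-ext-closed}. The essential observation is that the ideal $\underline{\bfI}$ is, by construction, just the essential image in $\slmod{H_n}$ of the abelian ideal $\bfI$, and the tensor product on $\slmod{H_n}$ is inherited from $\lmod{H_n}$.

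First, I would verify that for any fixed $V\in \lmod{H_n}$, the functor $V\otimes(-)$ descends to a well-defined endofunctor of $\slmod{H_n}$. This follows from Lemma \ref{projectivelemma}: if $P\in \lmod{H_n}$ is projective-injective, then so is $V\otimes P$ (indeed, $P$ is a summand of some free module $H_n^{\oplus m}$, so $V\otimes P$ is a summand of $(V\otimes H_n)^{\oplus m}$, which is projective-injective by the lemma). Consequently, any null-homotopic morphism $f\colon U\to U'$ that factors through a projective-injective $P$ gives rise to $V\otimes f$ factoring through $V\otimes P$, again null-homotopic. The same holds for the right-handed functor $(-)\otimes V$.

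Next, take $U\in \underline{\bfI}$ and $V\in \slmod{H_n}$. By the definition of $\underline{\bfI}$, there exists an object $U'\in \bfI$ together with an isomorphism $U\cong U'$ in $\slmod{H_n}$. Choosing any representative of $V$ in $\lmod{H_n}$ (still denoted $V$) and applying the functor $V\otimes(-)$ on the stable category from the previous step, we obtain an isomorphism
\[
V\otimes U \;\cong\; V\otimes U'
\]
in $\slmod{H_n}$. By Lemma \ref{lem-ext-closed}, $\bfI$ is a two-sided tensor ideal in $\lmod{H_n}$, whence $V\otimes U'\in \bfI$. Its image in $\slmod{H_n}$ therefore lies in $\underline{\bfI}$, and hence $V\otimes U\in \underline{\bfI}$ as well. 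The argument for $U\otimes V$ is completely symmetric, using the right-module version of Lemma \ref{lem-ext-closed}. The final ``tensor ideal'' conclusion is then immediate from what has just been proved.

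Honestly I do not expect a real obstacle here; the statement is essentially a formal consequence of Lemma \ref{lem-ext-closed} together with the standard fact that $V\otimes H_n$ is projective-injective. The only subtle point that deserves attention is bookkeeping between the two categories: one must be careful to choose a strict abelian-level representative $U'\in \bfI$ of the stable object $U\in\underline{\bfI}$, transport the tensor product at the abelian level where Lemma \ref{lem-ext-closed} applies, and then descend back to $\slmod{H_n}$ using that $V\otimes(-)$ preserves the class of null-homotopic maps.
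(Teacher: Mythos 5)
Your proof is correct and follows essentially the same route as the paper's: pick an abelian-level representative $U'\in\bfI$, invoke Lemma \ref{lem-ext-closed} to put $V\otimes U'$ in $\bfI$, and transport back along the stable isomorphism. The paper states this in two sentences; you supply the (correct, if standard) justification that $V\otimes(-)$ preserves projective-injectives and hence descends to $\slmod{H_n}$, but the underlying argument is the same.
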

\begin{proof}
We may take $U$ to be the image of an object of $\bfI$ under the quotient functor. The lemma is then a consequence of Lemma \ref{lem-ext-closed} of the previous subsection.
\end{proof}

\begin{corollary}\label{cor-shift-closed}
The subcategory $\underline{\bfI}$ is closed under the homological shifts of $\slmod{H_n}$.
\end{corollary}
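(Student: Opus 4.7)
The plan is to reduce the closedness of $\underline{\bfI}$ under the shift functors $[\pm 1]$ to the tensor ideal property of $\underline{\bfI}$ just established in Lemma \ref{extensionlemma}. The key input is that, for a finite-dimensional Hopf algebra, the stable-category shifts are implemented by tensoring with fixed ``shift objects'' $\Bbbk[\pm 1]$, so closedness under $[\pm 1]$ reduces to closedness under a particular tensor action.

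First, I would realize $\Bbbk[1]$ explicitly. The canonical embedding $\rho_\Bbbk \colon \Bbbk \hookrightarrow H_n\shift{\ell}$, $1 \mapsto \Lambda$, into the projective-injective module $H_n\shift{\ell}$ has cokernel $C := H_n\shift{\ell}/\Bbbk\Lambda$, so that $C \cong \Bbbk[1]$ in $\slmod{H_n}$. Next, I would show that for any $M \in \lmod{H_n}$ there is an isomorphism $M[1] \cong M \otimes \Bbbk[1]$ in $\slmod{H_n}$. Applying $M \otimes (-)$ to the short exact sequence $0 \to \Bbbk \to H_n\shift{\ell} \to C \to 0$ produces
\[
0 \longrightarrow M \longrightarrow M \otimes H_n\shift{\ell} \longrightarrow M \otimes C \longrightarrow 0,
\]
whose first map is exactly the canonical embedding $\rho_M$ of Section \ref{stable-hopf-sect} (since both send an element $m$ to $m \otimes \Lambda$), and whose middle term is projective-injective by Lemma \ref{projectivelemma}. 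Hence $M \otimes C$ is a model for $M[1]$ in the stable category. A symmetric argument starting from a projective cover $P \twoheadrightarrow \Bbbk$ with kernel $C' \cong \Bbbk[-1]$ yields the dual identity $M[-1] \cong M \otimes \Bbbk[-1]$.

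With these identities in hand, the conclusion follows immediately: given $U \in \underline{\bfI}$, the tensor ideal property (Lemma \ref{extensionlemma}) ensures that $U \otimes \Bbbk[1]$ and $U \otimes \Bbbk[-1]$ both lie in $\underline{\bfI}$, whence $U[1]$ and $U[-1]$ do as well.

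I do not anticipate any real obstacle. The argument is essentially a standard Hopf-algebraic observation that the suspension on the stable category of a finite-dimensional Hopf algebra is an inner endofunctor (implemented by tensoring with a single object), combined with the fact, already secured in Lemma \ref{extensionlemma}, that $\underline{\bfI}$ is a tensor ideal in $\slmod{H_n}$.
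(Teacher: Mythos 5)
Your proof is correct and follows essentially the same route as the paper: the paper's one-line argument also realizes the shift as $U[1]\cong U\otimes (H_n/\Bbbk\Lambda)\shift{\l}$ and then invokes the tensor-ideal property from Lemma \ref{extensionlemma}. You simply spell out the verification that $\rho_M$ is the canonical embedding and handle $[-1]$ explicitly, which the paper leaves implicit.
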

\begin{proof}
This follows from the previous Lemma and the fact that
\[
U[1]\cong U\otimes (H_n/\Bbbk\Lambda)\shift{\l}.
\]
for any object $U\in\slmod{H_n}$.
\end{proof}

\begin{lemma}\label{2of3-lemma}
Let $U\longrightarrow V\longrightarrow W\longrightarrow U[1]$ be a distinguished triangle in $\slmod{H_n}$. If two out of the three objects $U$, $V$ and $W$ are in $\underline{\bfI}$, then so is the third object.
\end{lemma}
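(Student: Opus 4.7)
My plan is to reduce, via rotation of distinguished triangles, to the single case in which the first two objects $U$ and $V$ lie in $\underline{\bfI}$, and then to realize such a triangle by a short exact sequence in $\lmod{H_n}$ so that extension closure of $\bfI$ (Lemma \ref{lem-ext-closed}) can be invoked. For the reduction, Corollary \ref{cor-shift-closed} makes $\underline{\bfI}$ stable under $[\pm 1]$, so starting from the rotated distinguished triangle $V \to W \to U[1] \to V[1]$, the case ``$V, W \in \underline{\bfI}$'' would force $U[1] \in \underline{\bfI}$ and hence $U \in \underline{\bfI}$, while the case ``$U, W \in \underline{\bfI}$'' follows analogously from the double rotation $W \to U[1] \to V[1] \to W[1]$.

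The key preliminary step I would establish is the identification of $\underline{\bfI}$ with $\bfI$ at the object level: a module $W \in \lmod{H_n}$ belongs to $\underline{\bfI}$ if and only if it already belongs to $\bfI$. The nontrivial direction relies on the standard fact that two modules over a Frobenius algebra are isomorphic in the stable category if and only if they differ by projective-injective summands. Given $W \cong V$ in $\slmod{H_n}$ with $V \in \bfI$, one obtains $W \oplus P \cong V \oplus Q$ in $\lmod{H_n}$ for projective-injectives $P$ and $Q$. Since projective-injectives belong to every $\bfI_k$ (Lemma \ref{lem-proj-inj-closed-Ik}), and hence to $\bfI$, extension closure (Lemma \ref{lem-ext-closed}) gives $W \oplus P \in \bfI$, and summand closure (Lemma \ref{lem-summands-closed}) then yields $W \in \bfI$.

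With this in hand, the main argument becomes a direct application of the pushout construction of standard triangles recalled in Section \ref{stable-sect}. Assuming $U, V \in \underline{\bfI}$, and thus $U, V \in \bfI$ by the identification above, the distinguished triangle is isomorphic in $\slmod{H_n}$ to the standard triangle arising from a lift $f \colon U \to V$ to $\lmod{H_n}$. This standard triangle fits into a short exact sequence
\[
0 \longrightarrow V \longrightarrow C_f \longrightarrow U[1] \longrightarrow 0
\]
in $\lmod{H_n}$, with $W \cong C_f$ stably. Now $V \in \bfI$ by hypothesis, and $U[1] \in \bfI$ by Corollary \ref{cor-shift-closed} combined with the identification $\underline{\bfI} = \bfI$. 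Extension closure then places $C_f$ in $\bfI$, so $W \in \underline{\bfI}$, as required.

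The main obstacle I anticipate is the bookkeeping needed to establish the identification $\underline{\bfI} = \bfI$, which is the bridge between the abelian and stable settings. Once this is in place, the remaining steps follow formally from the closure properties proved in Section \ref{sec-tensorideal} and from the explicit pushout description of standard triangles.
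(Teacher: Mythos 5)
Your proposal is correct and follows essentially the same route as the paper: rotation (justified by Corollary \ref{cor-shift-closed}) to reduce to the case $U, V \in \underline{\bfI}$, passage to a standard triangle realized by the pushout short exact sequence $0 \to V \to C_f \to U[1] \to 0$, and extension closure (Lemma \ref{lem-ext-closed}). The one point you make explicit that the paper passes over silently is the identification of $\underline{\bfI}$ with $\bfI$ on objects of $\lmod{H_n}$, via the standard fact that stably isomorphic modules over a self-injective algebra differ by projective-injective summands together with Lemmas \ref{lem-proj-inj-closed-Ik}, \ref{lem-ext-closed} and \ref{lem-summands-closed}; this is a genuine (if minor) gap in the paper's one-line reduction and your filling it is a worthwhile clarification rather than a different method.
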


\begin{proof}
Using Corollary \ref{cor-shift-closed} and the fact that any distinguished triangle is isomorphic to a standard distinguished triangle, we are reduced to showing that, if $U$, $V$ are objects of $\underline{\bfI}$ and $f\colon U\longrightarrow V$ is a map of $H_n$-modules, then the cone $C_f$ of $f$ is also in $\bfI$.

There exist direct sum decompositions $U\cong \oplus_{k=1}^t X_k$ and $V\cong\oplus_{l=1}^t Y_l$, with $X_k,Y_k\in \bfI_k$. Under these isomorphisms, $f=(f_{kl})$ is a matrix of $H_n$-module maps, where $f_{kl}=\pi_{Y_l}f\iota_{X_k}$ for the canonical inclusion $\iota_{X_k}\colon X_k\to U$ and projection $\pi_{Y_k}\colon V\to Y_k$. It follows from Proposition \ref{ortho-prop} that the images of the components $f_{kl}$ are zero in $\slmod{H_n}$. Hence, we may replace $f$ by the diagonal $H_n$-module map $f'=(\delta_{k,l}f_{kk})$ which has an isomorphic cone in $\slmod{H_n}$. Further, the cone construction respects direct sums of morphisms, i.e. $C_{f\oplus g}\cong C_f\oplus C_g$. Hence, it suffices to show that $C_{g}$ is in $\bfI$ for any morphism $g\colon U'\to V'$, where $U',V'$ are objects in $\bfI_k$.

By the definition of distinguished triangles, see equation \eqref{eqn-triangle}, the cone $C_{g}$ fits into the diagram
\[
\begin{gathered}
\xymatrix{
0 \ar[r] & U \ar[d]_g\ar[r]^-{\rho_U} & U\otimes H_n \ar[d]\ar[r] & U[1]
\ar@{=}[d]\ar[r] &0\\
0 \ar[r] & V \ar[r] & C_g \ar[r]^h & U[1]\ar[r] & 0
}
\end{gathered}.
\]
By Lemma \ref{lem-ext-closed-Ik}, $C_g$ is an object in $\bfI_k\subseteq \bfI$.
\end{proof}

\begin{lemma}\label{lemma-thickness}
The ideal $\underline{\bfI}$ is closed under direct summands.  That is, if $W\cong U\oplus V$ as objects of $\underline{\bfI}$, then both $U$ and $V$ belong to $\underline{\bfI}$.
\end{lemma}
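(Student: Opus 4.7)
The plan is to reduce the stable statement to the abelian one already proved in Lemma \ref{lem-summands-closed}. The key observation is that two objects of $\lmod{H_n}$ become isomorphic in $\slmod{H_n}$ if and only if they are projectively stably equivalent: there exist projective-injective $H_n$-modules $P,Q$ making them isomorphic after direct summing with $P$ on one side and $Q$ on the other. I will invoke this standard fact about stable categories over self-injective algebras and combine it with the already established closure properties of $\bfI$.

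Concretely, first I choose representatives: since $W\in\underline{\bfI}$, I may replace $W$ by an isomorphic object (in $\slmod{H_n}$) lying in the abelian ideal $\bfI$. Similarly pick arbitrary representatives $U,V\in\lmod{H_n}$ of the summands. The hypothesis $W\cong U\oplus V$ in $\slmod{H_n}$ then lifts to an honest isomorphism of $H_n$-modules
\[
W\oplus P\;\cong\; U\oplus V\oplus Q
\]
for some projective-injective $P,Q\in\lmod{H_n}$.

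Next I observe that the left-hand side already lies in $\bfI$: by Lemma \ref{lem-proj-inj-closed-Ik} every projective-injective $H_n$-module belongs to $\bfI$ (in fact to each $\bfI_k$), and by Lemma \ref{lem-ext-closed} the subcategory $\bfI$ is closed under extensions, so $W\oplus P\in\bfI$. Transporting across the isomorphism gives $U\oplus V\oplus Q\in\bfI$. Now I apply Lemma \ref{lem-summands-closed}, which states precisely that $\bfI$ is closed under direct summands in $\lmod{H_n}$: it follows that $U,V$ (and $Q$) all lie in $\bfI$, whence $U,V\in\underline{\bfI}$ as desired.

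The only nontrivial input is the lifting of stable isomorphisms to abelian isomorphisms up to projective summands; this is a standard property of the stable category of a self-injective (here, Frobenius by Lemma \ref{lem-Hn-Frobenius-Hopf}) algebra, and everything else is an immediate appeal to the closure results of Section \ref{sec-tensorideal}. So there is really no main obstacle beyond making this lifting explicit and invoking the right previous lemma in the right order.
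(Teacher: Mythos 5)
Your proof is correct and follows essentially the same route as the paper: lift the stable isomorphism $W\cong U\oplus V$ to an honest $H_n$-module isomorphism $W\oplus P\cong U\oplus V\oplus Q$ by adding projective-injective summands, observe that the left-hand side still lies in $\bfI$, and then invoke Lemma \ref{lem-summands-closed}. You merely make explicit the standard stable-isomorphism-up-to-projectives fact and the appeal to Lemmas \ref{lem-proj-inj-closed-Ik} and \ref{lem-ext-closed}, which the paper leaves implicit in the phrase ``by adding enough projective-injective $H_n$-modules to both sides.''
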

\begin{proof}
By adding enough projective-injective $H_n$-modules to both sides, we may assume that $W\cong U\oplus V$ in $\lmod{H_n}$. Thus the claim is a direct consequence of Lemma \ref{lem-summands-closed}.
\end{proof}

Recall that a full triangulated subcategory in a triangulated category is called \emph{thick} (or \emph{saturated}) if it is closed under taking direct summands (see e.g. \cite{St}*{\href{http://stacks.math.columbia.edu/tag/0123}{Tag 05RA}}). Lemma \ref{lemma-thickness} thus establishes the thickness of the ideal $\underline{\bfI}$ inside $\slmod{H_n}$.

Summarizing the above discussion, we have established the following.

\begin{theorem}
The ideal $\un{\bfI}$ constitutes a full triangulated tensor ideal in the stable category $\slmod{H_n}$ which is thick. \hfill $\square$
\end{theorem}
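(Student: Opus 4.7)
The plan is simply to assemble the three defining properties from the lemmas already proved in this subsection; no new argument is required. The theorem asserts three things about $\underline{\bfI}\subseteq \slmod{H_n}$: it is a full triangulated subcategory, a two-sided tensor ideal, and thick. I would dispatch them in that order.

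For the triangulated subcategory structure, $\underline{\bfI}$ is full by construction, so only closure under the shift functors $[\pm 1]$ and the ``2-out-of-3'' property for distinguished triangles need verification. The first is Corollary \ref{cor-shift-closed}, which derives shift-closure from the tensor ideal closure via $U[1]\cong U\otimes (H_n/\Bbbk\Lambda)\shift{\l}$. The 2-out-of-3 property is exactly the lemma immediately preceding Lemma \ref{lemma-thickness}: after reducing any distinguished triangle to a standard one, one observes that the cone $C_f$ of a morphism $f\colon U\to V$ fits into a short exact sequence $0\to V\to C_f\to U[1]\to 0$ in $\lmod{H_n}$, so if two of $U,V,W$ lie in $\underline{\bfI}$, extension-closure (Lemma \ref{lem-ext-closed}) together with shift-closure forces the third into $\underline{\bfI}$ as well.

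The tensor ideal property is Lemma \ref{extensionlemma}: lifting an object $U\in \underline{\bfI}$ to a representative in $\bfI$ and an arbitrary $V\in \slmod{H_n}$ to an $H_n$-module, Lemma \ref{lem-ext-closed} gives $U\otimes V, V\otimes U\in \bfI$, whence their images lie in $\underline{\bfI}$. Thickness is the content of Lemma \ref{lemma-thickness}: a splitting $W\cong U\oplus V$ in $\underline{\bfI}$ can, after adding projective-injective summands to both sides, be lifted to a splitting in $\lmod{H_n}$, and then Lemma \ref{lem-summands-closed} concludes.

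The only step that is genuinely delicate has already been handled in Lemma \ref{lem-summands-closed-Ik}/\ref{lem-summands-closed}, namely the fact that the filtration witnessing membership in $\bfI$ descends to direct summands. That argument used the Krull--Schmidt property of $\lmod{H_n^{k_i}}$ together with the observation that the relevant subquotients $V_{k_i}$ become projective-injective after restriction to $H_n^{k_i}$, so I would not expect any further obstacle at this stage; the theorem should follow in a few lines by citing the lemmas in the sequence above.
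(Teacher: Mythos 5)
Your proposal is correct and matches the paper exactly: the theorem is stated as a summary of the preceding lemmas (Lemma \ref{extensionlemma}, Corollary \ref{cor-shift-closed}, the two-out-of-three lemma, and Lemma \ref{lemma-thickness}), and the paper's ``proof'' is simply the remark ``Summarizing the above discussion, we have established the following.'' You have identified the right lemmas and assembled them in the same way.
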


Hence standard machinery on localization allows us form a Verdier localized (or quotient) category of $\slmod{H_n}$ by $\underline{\bfI}$, see e.g. \cite{St}*{\href{http://stacks.math.columbia.edu/tag/0123}{Tag 05RA}}.

\begin{definition}
For any positive integer $n$, the category $\mathcal{O}_n$ is defined as the Verdier localization of $\slmod{H_n}$ by the ideal $\underline{\bfI}$:
$$\mathcal{O}_n:=\slmod{H_n}/{\un{\bfI}}.$$
\end{definition}

A morphism $s:M\rightarrow N$ in $\slmod{H_n}$ descends to an isomorphism in $\mathcal{O}_n$ if and only if the cone of $s$ is isomorphic to an object of $\underline{\bfI}$. We declare this class of morphisms $s$ as \emph{quasi-isomorphisms}. Such quasi-isomorphisms constitute a localizing class in $\slmod{H_n}$ since $\underline{\bfI}$ is a saturated full-subcategory of $\slmod{H_n}$. A general morphism from $M$ to $N$ in the localized category $\mathcal{O}_n$ is represented by a ``roof'' of the form
\begin{equation}
\begin{gathered}
\xymatrix{
& M^\prime \ar[dl]_s\ar[dr]^f&\\
M & & N
}
\end{gathered} \ ,
\end{equation}
where $s$ is a quasi-isomorphism and $f$ is some morphism in $\slmod{H_n}$.

\begin{remark}\label{rem-localize-vs-quotient}
The localization construction used is a also known as the Verdier quotient, cf. \cite{St}*{\href{http://stacks.math.columbia.edu/tag/0123}{Tag 05RA}}. Observe that a morphism $f\colon X\rightarrow Y$ in $\slmod{H_n}$ descends to zero in $\mathcal{O}_n$ if and only if it factors through an object of $\underline{\bfI}$. Indeed, the ``if'' part is clear, since any object of $\underline{\bfI}$ is isomorphic to the zero object in $\mathcal{O}_n$. Conversely, choose an $s:Y\rightarrow Y^\prime$ in $\slmod{H_n}$ such that $s\circ f=0$ and $s$ descends to an isomorphism in $\mathcal{O}_n$. Then the cone of $s$ shifted by $[-1]$, denoted by $C$, fits into the diagram
\[
\begin{gathered}
\xymatrix{ & C\ar[d]\\
X\ar[r]^f \ar@{-->}[ur] & Y\ar[d]_s\\
& Y^\prime
}
\end{gathered} \ .
\]
Thus the dashed arrow exists by the exactness of $\Hom_{\slmod{H_n}}(X,\mbox{-})$ applied to the distinguished triangle $C\rightarrow Y\stackrel{s}{\rightarrow} Y' \stackrel{[1]}{\rightarrow} C[1]$.
\end{remark}

 A \emph{standard distinguished triangle} in $\mathcal{O}_n$ is the image of a distinguished triangle in $\slmod{H_n}$, and any triangle of $\mathcal{O}_n$ isomorphic to a standard distinguished triangle is
called a \emph{distinguished triangle}.

\subsection{Tensor triangulated structure}
Our goal in this part is to establish the triangulated tensor category structure on $\cO_n$ which is inherited from that of $\slmod{H_n}$ under localization.

\begin{lemma}\label{lem-triang-structure}
The following functors on $\slmod{H_n}$ descend to (bi-)exact functors on $\mathcal{O}_n$:
\begin{enumerate}
\item[(1)] The tensor product $(\mbox{-}\otimes \mbox{-})\colon \slmod{H_n}\times \slmod{H_n} \longrightarrow \slmod{H_n}$.\item[(2)] The inner hom $\Hom^\bullet (\mbox{-}, \mbox{-})\colon \slmod{H_n}^{\mathrm{op}}\times \slmod{H_n} \longrightarrow \slmod{H_n}$.
\item[(3)] The grading shift functors $\shift{k}\colon  \slmod{H_n}\longrightarrow \slmod{H_n}$, where $k\in \mathbb{Z}$.
\item[(4)] The vector space dual $(\mbox{-})^*:\slmod{H_n}\longrightarrow \slmod{H_n}$.
\end{enumerate}
\end{lemma}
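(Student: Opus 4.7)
The plan is to apply the universal property of Verdier localization: any triangulated endofunctor $F$ of $\slmod{H_n}$, or a biexact/contravariant exact one in the evident sense, which sends $\underline{\bfI}$ into $\underline{\bfI}$ in each argument, factors uniquely through the quotient $\mathcal{O}_n=\slmod{H_n}/\underline{\bfI}$. Accordingly, for each of the four items I will establish two points: that the functor is already triangulated (respectively biexact or contravariant exact) on the stable category, and that it preserves the ideal $\underline{\bfI}$.

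For the first point, I will use that $\lmod{H_n}$ is abelian and that $H_n$ is Frobenius, so that projectives and injectives coincide. Each of $\otimes$, $\Hom^\bullet$, $\shift{k}$ and $(\,\cdot\,)^*$ is exact in each argument on $\lmod{H_n}$, because all of these operations take place over a field. To descend these exactness statements to $\slmod{H_n}$, I need only verify that each functor carries projective-injective modules (in any argument) to projective-injective modules. For $\otimes$, this is Lemma \ref{projectivelemma}; for $\Hom^\bullet$ it follows from the same lemma via the identification $\Hom^\bullet(U,V)\cong U^*\otimes V$ of \eqref{internalhom_dual} together with Frobenius duality; for $\shift{k}$ it is immediate; and for $(\,\cdot\,)^*$ it follows from the Frobenius property.

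For the second point, preservation of $\underline{\bfI}$ is assembled from the preceding subsection. Closure of $\underline{\bfI}$ under tensor product action by any object of $\slmod{H_n}$ is precisely Lemma \ref{extensionlemma}. Closure of $\bfI$, and hence of its image $\underline{\bfI}$, under $\Hom^\bullet$ in either argument is the corollary immediately following Lemma \ref{lem-summands-closed}. Closure of $\bfI$ under grading shifts and under the $\Bbbk$-linear dual is among the assertions of Lemma \ref{lem-ext-closed}. In every case the passage from $\bfI\subset\lmod{H_n}$ to $\underline{\bfI}\subset\slmod{H_n}$ is harmless, because the natural quotient intertwines all of these constructions with their stable counterparts.

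I do not anticipate a substantial obstacle: all input has been prepared in Sections \ref{stablesect}--\ref{sec-tensorideal}, and the argument is essentially a packaging. The one step that will require a sentence of care is the verification that the induced functors on $\mathcal{O}_n$ remain exact in each variable separately, which I expect to handle by the standard roof-calculus argument: given a quasi-isomorphism $s$ in one slot and an arbitrary morphism in the other, functoriality of $\otimes$ (respectively $\Hom^\bullet$) together with the closure properties just recalled guarantees that $F(s)$ has its cone in $\underline{\bfI}$, i.e.\ is again a quasi-isomorphism, so the roof representing a morphism in $\mathcal{O}_n$ is transported to a roof.
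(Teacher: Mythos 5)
Your proposal is correct and follows essentially the same route as the paper's proof: cite biexactness of $\otimes$ on $\slmod{H_n}$ (the paper invokes \cite{Kh} directly, while you sketch the standard argument that an exact functor taking projective-injectives to projective-injectives descends), then use the tensor-ideal property of $\underline{\bfI}$ established in Section \ref{sec-tensorideal} to see that quasi-isomorphisms are preserved, and finally reduce (2) and (3) to (1) and (4) via $\Hom^\bullet(M,N)\cong M^*\otimes N$ and Corollary \ref{gradingshift}. The only point where the paper is more explicit than you is in the cone computation for $\mathrm{Id}_N\otimes s$: rather than appealing abstractly to triangulated functoriality, the paper first replaces $s$ by the injective map $(s,\rho_M)$, identifies the cone of $s$ with the abelian cokernel $C$, and then tensors the resulting short exact sequence with $N$ to exhibit the cone of $\mathrm{Id}_N\otimes s$ concretely as $N\otimes C\in\bfI$; your roof-calculus gloss packages the same content at the triangulated level.
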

\begin{proof}
The tensor product functor $\otimes$ on $\slmod{H_n}$ is bi-exact \cite{Kh}. Thus, for (1), it suffices to show that it preserves the class of quasi-isomorphisms. Let $s\colon  M\rightarrow M^\prime$ be a quasi-isomorphism in $\slmod{H_n}$ that arises from an actual $H_n$-module map $s\colon M\rightarrow M^\prime$. Replacing $s$ by $(s,\rho_M)\colon M\longrightarrow M^\prime\oplus M\otimes H\shift{\l}$ if necessary, we may assume from the start that $s$ is injective. Thus $C:=\mathrm{coker}(s)$ is isomorphic to a module in $\bfI$ in $\slmod{H_n}$, and a direct sum of $C$ by some projective-injective $H_n$-module belongs to $\bfI$. Since $\bfI$ is closed under summands (Lemma \ref{lem-summands-closed}), we may assume $C$ is also in $\bfI$. Tensoring the exact sequence
\[
0\longrightarrow M \stackrel{s}{\longrightarrow} M^\prime \longrightarrow C\longrightarrow 0
\]
with any module $N$ on the left, we have a short exact sequence
\[
0\longrightarrow N\otimes M \xrightarrow{\mathrm{Id}_N\otimes s} N\otimes M^\prime \longrightarrow N\otimes C\longrightarrow 0.
\]
By Lemma \ref{lem-ext-closed}, $N\otimes C\in \bfI$, and hence $\mathrm{Id}_N\otimes s$ descends to a quasi-isomorphism in $\slmod{H_n}$. The case of tensoring on the right is similar, and this finishes the proof of (1).

Part (4) is clear since the dual of any object in $\bfI$ is also in $\bfI$ by definition.
Now part (2) and (3) are easy consequences of (1) and (4) because of Corollary \ref{gradingshift} and the isomorphism $\Hom^\bullet(M,N)\cong M^*\otimes N$ of equation \eqref{internalhom_dual}. 
\end{proof}

We are now ready to establish a tensor-hom adjunction in our category $\mathcal{O}_n$.

\begin{theorem}\label{thm-tensor-hom-adj-On}
The tensor-hom adjunction holds in $\mathcal{O}_n$:
\[
\Hom_{\mathcal{O}_n}(M\otimes L, N)\cong \Hom_{\mathcal{O}_n}(L,\Hom^\bullet(M,N)),
\]
where $M$, $N$ and $L$ are arbitrary objects of $\mathcal{O}_n$.
\end{theorem}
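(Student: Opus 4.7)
The plan is to descend the tensor-hom adjunction from the stable category $\slmod{H_n}$, where it is already established by Corollary~\ref{eqn-tensor-hom-adj}, through the Verdier quotient functor $Q\colon \slmod{H_n}\to\mathcal{O}_n$. The argument is formal in nature: an adjoint pair of functors that both preserve the localizing class of quasi-isomorphisms induces an adjoint pair on the localization, with descended unit and counit.

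First, I would invoke Lemma~\ref{lem-triang-structure} to observe that the endofunctors $F:=M\otimes(-)$ and $G:=\Hom^\bullet(M,-)$ of $\slmod{H_n}$ are exact and preserve quasi-isomorphisms. By the universal property of Verdier localization, they descend uniquely (up to natural isomorphism) to exact endofunctors $\bar F$ and $\bar G$ of $\mathcal{O}_n$ satisfying $\bar F\circ Q\cong Q\circ F$ and $\bar G\circ Q\cong Q\circ G$. Next, Corollary~\ref{eqn-tensor-hom-adj} packages $F\dashv G$ as a (graded) adjunction on $\slmod{H_n}$; equivalently, it supplies a unit $\eta\colon \mathrm{Id}\to GF$ and a counit $\epsilon\colon FG\to\mathrm{Id}$ in $\slmod{H_n}$ satisfying the two triangle identities. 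Applying $Q$ componentwise yields natural transformations $\bar\eta\colon \mathrm{Id}_{\mathcal{O}_n}\to \bar G\bar F$ and $\bar\epsilon\colon \bar F\bar G\to \mathrm{Id}_{\mathcal{O}_n}$; they inherit the triangle identities, since any equality of compositions of morphisms in $\slmod{H_n}$ is transported to $\mathcal{O}_n$ by the functor $Q$. This exhibits $\bar F\dashv \bar G$ in $\mathcal{O}_n$, which is precisely the desired natural isomorphism (one takes degree-zero parts on both sides of the graded adjunction if one prefers to match the statement as given).

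The main obstacle is of a clerical rather than substantive nature: one must verify that a natural transformation in $\slmod{H_n}$ genuinely induces a natural transformation between the descended functors on $\mathcal{O}_n$, with components represented by the $Q$-images of the original components, and that naturality squares as well as the triangle identities survive this passage. These are formal consequences of the functoriality of $Q$, and require no further analysis of $\lmod{H_n}$ nor explicit manipulation of roof diagrams in $\mathcal{O}_n$. Consequently, no new concrete computation with $H_n$-modules is needed beyond what already underlies Lemma~\ref{lem-triang-structure}.
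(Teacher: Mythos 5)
Your proposal is correct, but it takes a genuinely different route from the paper. You descend the adjunction $M\otimes(\mbox{-})\dashv \Hom^\bullet(M,\mbox{-})$ from $\slmod{H_n}$ to $\mathcal{O}_n$ formally: because both functors preserve the class of quasi-isomorphisms (Lemma~\ref{lem-triang-structure}), they induce endofunctors of the Verdier quotient, and applying $Q$ componentwise to the unit, counit and the two triangle identities (which are honest equalities of morphisms at each object of $\slmod{H_n}$) transports the adjunction to $\mathcal{O}_n$, since $Q$ is surjective on objects and inverts quasi-isomorphisms. This is a standard descent-of-adjunctions argument. The paper instead constructs only one comparison map explicitly by transporting roofs through the stable-category adjunction of Corollary~\ref{eqn-tensor-hom-adj}, observes it is a natural transformation of cohomological functors in the variable $M$, and then shows it is an isomorphism by induction on $\dim_\Bbbk M$: the base case $M=\Bbbk$ is immediate, and the inductive step peels a copy of $\Bbbk$ off the socle of $M$ (using that $H_n$ is graded local) and invokes the two-out-of-three property for morphisms of distinguished triangles. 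Your approach is shorter and more conceptual, and directly exhibits the unit and counit in $\mathcal{O}_n$; the paper's approach is more hands-on, avoiding any appeal to the 2-categorical universal property of Verdier localization and instead leaning on the specific module-theoretic structure of $H_n$. Both are valid, and the two proofs could profitably coexist: yours clarifies that the result is a purely formal consequence of Lemma~\ref{lem-triang-structure}, while the paper's records the concrete roof-level description of the comparison map.
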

\begin{proof}
Given a morphism $f\in \Hom_{\mathcal{O}_n}(L, \Hom^\bullet(M,N))$ represented by a ``roof'' diagram in $\slmod{H_n}$
\begin{align*}
\begin{gathered}
\xymatrix{&L^\prime \ar[dl]_{s}\ar[dr]^{g}&\\
L\ar@{-->}[rr]^-{f}&&\Hom^\bullet (M,N)
}
\end{gathered} \ ,
\end{align*}
we have, by the adjunction \eqref{eqn-tensor-hom-adj}, another ``roof'' $f^\prime\in\Hom_{\mathcal{O}_n}(M\otimes L, N)$
\begin{align*}
\begin{gathered}
\xymatrix{&M\otimes L^\prime \ar[dl]_{\mathrm{Id}_M\otimes s}\ar[dr]^{g^{\prime}}&\\
M\otimes L\ar@{-->}[rr]^-{f^\prime}&&N
}
\end{gathered} 
\end{align*}
since $\mathrm{Id}_M\otimes s$ is a quasi-isomorphism of degree zero (see the proof of Lemma \ref{lem-triang-structure}). Here $g^{ \prime}$ is the degree zero map that corresponds to $g$ under the isomorphism \eqref{eqn-tensor-hom-adj}. In other words, we have constructed a map of morphism spaces
\begin{equation}\label{eqn-desired-iso}
\Hom_{\mathcal{O}_n}(M\otimes L, N)\longrightarrow \Hom_{\mathcal{O}_n}(L,\Hom^\bullet(M,N)), \quad \quad f\mapsto f^\prime,
\end{equation}
which gives rise to a natural transformation of cohomological functors
\begin{equation}\label{eqn-desired-iso2}
\Hom_{\mathcal{O}_n}(\mbox{-}\otimes L, N)\Longrightarrow \Hom_{\mathcal{O}_n}(L,\Hom^\bullet (\mbox{-},N)).
\end{equation}
Now, assume that $M$ is an actual $H_n$-module. We will prove that the natural transformation of functors \eqref{eqn-desired-iso2} is an isomorphism by induction on the dimension of $M$.

If $M$ is one-dimensional, then, up to a grading shift on $M$, we may assume that $M=\Bbbk$, and \eqref{eqn-desired-iso} reduces to an isomorphism
\[
\Hom_{\mathcal{O}_n}(\Bbbk \otimes L, N)\cong \Hom_{\mathcal{O}_n}(L,N) \cong\Hom_{\mathcal{O}_n}(L,\Hom^\bullet(\Bbbk,N)).
\]

When $\mathrm{dim}(M)>1$, we may assume, up to grading shift, that $M$ contains a copy of $\Bbbk$ in its socle. This can be done since $H_n$ is a graded local algebra. Then we have a short exact sequence of $H_n$-modules
\[
0\longrightarrow \Bbbk\longrightarrow M\longrightarrow M^\prime \longrightarrow 0,
\]
where $M^\prime$ denotes the quotient. This sequence induces a distinguished triangle in $\slmod{H_n}$ and descends to a standard distinguished triangle in $\mathcal{O}_n$. Applying \eqref{eqn-desired-iso2} to the obtained triangle in $\mathcal{O}_n$, we obtain a map of exact triangles:
\[
\begin{gathered}
\xymatrix@=1.2em{
\Ext_{\mathcal{O}_n}^\bullet(\Bbbk\otimes L, N) \ar[r]\ar[d]& \Ext_{\mathcal{O}_n}^\bullet(M\otimes L, N)\ar[r]\ar[d] & \Ext_{\mathcal{O}_n}^\bullet(M^\prime\otimes L, N) \ar[r]^-{[1]} \ar[d] & \\
\Ext_{\mathcal{O}_n}^\bullet( L,\Hom^\bullet(\Bbbk, N)) \ar[r] & \Ext_{\mathcal{O}_n}^\bullet( L,\Hom^\bullet(M, N)) \ar[r] & 
\Ext_{\mathcal{O}_n}^\bullet( L,\Hom^\bullet(M^\prime, N)) \ar[r]^-{[1]} & 
}
\end{gathered} \ .
\]
Here we have adopted the conventional notation 
$$\Ext_{\mathcal{O}_n}^\bullet(L,N):=\bigoplus_{i\in \Z}\Hom_{\mathcal{O}_n}(L,M[i]).$$
The left-most and, by inductive hypothesis, the right-most vertical arrow are isomorphisms of $\Ext$-groups. The theorem then follows from the usual ``two-out-of-three'' properties for distinguished  triangles in triangulated categories.
\end{proof}

\begin{remark}\label{rem-another-proof}
As pointed out by the referee, Theorem \ref{thm-tensor-hom-adj-On} admits a more conceptual proof than the explicit one above, as follows.

Suppose that $\mathcal{L} \colon \mathcal{C} \longrightarrow \mathcal{D}$ is a functor admitting a right adjoint $\mathcal{R}$. Let
$\Sigma_{\mathcal{C}}$ and $\Sigma_{\mathcal{D}}$ be classes of morphisms in $\mathcal{C}$ and $\mathcal{D}$ such that $\mathcal{L}(\Sigma_{\mathcal{C}})\subseteq \Sigma_{\mathcal{D}}$ and $\mathcal{R}(\Sigma_{\mathcal{D}})\subseteq \Sigma_{\mathcal{C}}$. Then it is immediate from the universal property of Verdier localization that $(\mathcal{L}, \mathcal{R})$ induces a pair
of adjoint functors between the localized categories $\mathcal{C}(\Sigma_{\mathcal{C}}^{-1})$ and  $\mathcal{D}(\Sigma_{\mathcal{D}}^{-1})$.

Now, in our situation, $\mathcal{C}=\mathcal{D}=\slmod{H_n}$. Take $\mathcal{L}$ and $\mathcal{R}$ to be the tensor and $\Hom$ functors respectively. It suffices to check that these functors preserve the ideal $\underline{\bfI}$, which in turn follows from the proof of Lemma \ref{lem-triang-structure}.
\end{remark}

Taking $L=\Bbbk$ in Theorem \ref{thm-tensor-hom-adj-On}, we obtain an isomorphism of $H_n$-modules
\[
\Hom_{\mathcal{O}_n}(M, N)\cong \Hom_{\mathcal{O}_n}(\Bbbk,\Hom^\bullet(M,N))\cong\Hom_{\mathcal{O}_n}(\Bbbk,M^*\otimes N),
\]
which gives an implicit description of the morphism spaces.

\begin{remark}
It remains an interesting question to compute the endomorphism (resp.~ $\mathrm{Ext}^{\bullet}$) algebra of the unit object $\Bbbk\in \mathcal{O}_n$. Since $\Bbbk$ is the (triangulated) monoidal unit, the endomorphism (resp.~$\mathrm{Ext}^{\bullet}$) algebra is a commutative (resp.~super) $\Bbbk$-algebra. It is nonzero since, otherwise, the object $\Bbbk$ would be in $\underline{\bfI}$. This is clearly false since $\Bbbk$ is not free as a module over $\hat{H}_n^k$ for any $k=1,\dots, t$.
\end{remark}

\begin{proposition}\label{prop-shift-symmetry}
The tensor product on $\cO_n$ is compatible with homological shift in the sense that for any objects $X$ and $Y$ there are natural isomorphisms
$$(X\otimes Y)[1]\cong X[1]\otimes Y\cong X\otimes Y[1].$$
\end{proposition}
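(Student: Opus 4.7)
The plan is to construct both natural isomorphisms first at the level of $\slmod{H_n}$, using the exactness of $\otimes$ on $\lmod{H_n}$ together with the description of $M[1]$ as the cokernel of the canonical embedding $\rho_M\colon M\longrightarrow M\otimes H_n\shift{\l}$ from Lemma \ref{projectivelemma}. Both isomorphisms will then descend to $\mathcal{O}_n$ since the Verdier quotient is triangulated and monoidal by Lemma \ref{lem-triang-structure}.

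For the isomorphism $X\otimes Y[1]\cong (X\otimes Y)[1]$, I would tensor the standard short exact sequence
\[
0\longrightarrow Y\xrightarrow{\rho_Y} Y\otimes H_n\shift{\l}\longrightarrow Y[1]\longrightarrow 0
\]
on the left by $X$. Exactness of $\otimes$ and associativity yield a short exact sequence
\[
0\longrightarrow X\otimes Y\longrightarrow (X\otimes Y)\otimes H_n\shift{\l}\longrightarrow X\otimes Y[1]\longrightarrow 0,
\]
whose middle term is projective-injective by Lemma \ref{projectivelemma}. This sequence is therefore (up to isomorphism in $\slmod{H_n}$) a defining one for $(X\otimes Y)[1]$, producing the claimed natural isomorphism.

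For the symmetric statement $X[1]\otimes Y\cong (X\otimes Y)[1]$, the analogous approach would tensor the standard sequence for $X$ on the right by $Y$. The main obstacle is that the middle term $(X\otimes H_n\shift{\l})\otimes Y\cong X\otimes (H_n\shift{\l}\otimes Y)$ must be projective-injective, whereas Lemma \ref{projectivelemma} only handles tensor products of the form $M\otimes H_n$. My proposal is to supply a left counterpart: for any $H_n$-module $Y$, there is an isomorphism of $H_n$-modules
\[
H_n\otimes Y\;\xrightarrow{\sim}\;H_n\otimes Y_0,\qquad h\otimes y\;\longmapsto\; \sum_{h}h_1\otimes S(h_2)\,y,
\]
with inverse $h\otimes y_0\mapsto \sum_h h_1\otimes h_2 y_0$. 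The $H_n$-linearity reduces to the multiplicativity $\Delta(gh)=\Delta(g)\Delta(h)$ combined with the identity $\sum_{g}g_1\otimes S(g_2)g_3 = g\otimes 1$ (a consequence of coassociativity and the antipode axiom), and these identities continue to hold after bosonizing to $H_n\rtimes \Bbbk C_N$ (Lemma \ref{lem-bosonized-Hn}) despite the appearance of the group-like factors $K^{n_k}$ in the coproducts of the $\d_k$.

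Granted this left analogue, Lemma \ref{indproj} together with closure of projective-injectives under grading shifts, direct sums, and direct summands implies that $X\otimes P$ is projective-injective whenever $P$ is. Running the argument of the second paragraph on the right then produces the natural isomorphism $X[1]\otimes Y\cong (X\otimes Y)[1]$ in $\slmod{H_n}$, and hence in $\mathcal{O}_n$.
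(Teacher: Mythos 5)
Your overall strategy is valid and goes through, but it takes a more self-contained route than the paper: the paper's proof is essentially one line, citing Khovanov's Lemma~2 for the explicit isomorphism $M[1]\cong (H_n/\Bbbk\Lambda)\shift{\l}\otimes M$ (complementing the ``obvious'' realization $M[1]\cong M\otimes(H_n/\Bbbk\Lambda)\shift{\l}$), whereas you reconstruct both isomorphisms from the defining short exact sequences and supply the needed ``left'' analogue of Lemma~\ref{projectivelemma} yourself. The first half of your argument ($X\otimes Y[1]\cong (X\otimes Y)[1]$) is fine exactly as written.

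The place where your write-up is too loose is the key lemma that $H_n\otimes Y$ is projective-injective. Your formula $\phi(h\otimes y)=\sum_h h_1\otimes S(h_2)y$ must be read with the \emph{braided} coproduct and antipode of $H_n$ in $\qvect$, i.e.\ $\Delta(\d_k)=\d_k\otimes 1 + 1\otimes\d_k$ and $S(\d_k)=-\d_k$. If one instead uses the coproduct of the bosonization $H_n\rtimes\Bbbk C_N$ from Lemma~\ref{lem-bosonized-Hn}, then $\Delta(\d_k)=\d_k\otimes 1+K^{n_k}\otimes\d_k$ and the expression $\sum_h h_1\otimes S(h_2)y$ already escapes $H_n\otimes Y_0$ — so the identities do \emph{not} simply ``continue to hold after bosonizing.'' With the braided coproduct the map does land in $H_n\otimes Y_0$, but the $H_n$-linearity check is more delicate than you indicate: multiplicativity of $\Delta$ must be read with the braided product on $H_n\otimes H_n$, and one needs the braided anti-(co)multiplicativity $S\circ m = m\circ(S\otimes S)\circ\Psi_{H_n,H_n}$; the $q$-factors coming from the braiding then cancel against those in the $H_n$-action on the tensor product. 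As stated, your justification glosses over these cancellations. A cleaner route that avoids any explicit formula is to dualize Lemma~\ref{projectivelemma}: $Y^*\otimes H_n$ is projective-injective, hence so is its dual, and by the tensor--hom adjunction \eqref{eqn-tensor-hom-vec-version} together with the Frobenius isomorphism $H_n^*\cong H_n\shift{-\l}$ one gets $(Y^*\otimes H_n)^*\cong H_n^*\otimes Y\cong H_n\otimes Y\shift{-\l}$, which is the statement you need. Either fix closes the gap, after which the rest of your argument is sound.
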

\begin{proof}
This is because the shift functor can be realized as
$$M[1]\cong M\otimes (H_n/\Bbbk \Lambda)\shift{\l}\cong (H_n/\Bbbk \Lambda)\shift{\l}\otimes M.$$ 
See \cite{Kh}*{Lemma~2} for an explicit formula of the second isomorphism. 
\end{proof}

\subsection{Rings of cyclotomic integers}
In this subsection, we prove that the Grothendieck ring of the quotient category $\mathcal{O}_n$ is isomorphic to the cyclotomic ring $\mathbb{O}_n$ of a primitive $n$th root of unity.
 
For a formal variable $\nu $, recall the notation 
$$[n]_\nu:=\dfrac{\nu^n-1}{\nu-1}=1+\nu+\cdots + \nu^{n-1}\in \mZ[\nu],$$ and denote the $n$th cyclotomic polynomial by $\Phi_n(\nu)$. We will use the following elementary facts about cyclotomic polynomials.

\begin{lemma}\label{cyclotomiclemma}Let $n=p_1^{n_1}\ldots p_t^{n_t}$, where $n_k\geq 1$ are integers, and $p_k$ are pairwise distinct primes, and $m=p_1\ldots p_t$ be the radical of $n$. Then the cyclotomic polynomials in a formal variable $\nu$ satisfy
\begin{gather}
\Phi_m(\nu)=\gcd \left([m]_\nu/[m/p_1]_\nu,\ldots,[m]_\nu/[m/p_t]_\nu\right),\label{cyclotomiceq1}\\
\Phi_{p_k}(\nu^{m/p_k})=[m]_{\nu}/[m/p_k]_{\nu},\quad \quad k=1,\ldots, t,\label{cyclotomiceq2}\\
\Phi_n(\nu)=\Phi_{m}(\nu^{n/m}).\label{cyclotomiceq3}
\end{gather}
\end{lemma}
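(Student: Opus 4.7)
The plan is to dispatch the three identities in the order (\ref{cyclotomiceq2}), (\ref{cyclotomiceq3}), (\ref{cyclotomiceq1}), since (\ref{cyclotomiceq1}) will be derived from (\ref{cyclotomiceq2}) together with the cyclotomic factorization of $\nu^m-1$.

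For (\ref{cyclotomiceq2}), I would compute directly: since
\[
\frac{[m]_\nu}{[m/p_k]_\nu}=\frac{(\nu^m-1)/(\nu-1)}{(\nu^{m/p_k}-1)/(\nu-1)}=\frac{\nu^m-1}{\nu^{m/p_k}-1},
\]
the substitution $w=\nu^{m/p_k}$ turns the right side into $(w^{p_k}-1)/(w-1)=1+w+\cdots +w^{p_k-1}$, which is $\Phi_{p_k}(w)$ because $p_k$ is prime. This is $\Phi_{p_k}(\nu^{m/p_k})$.

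For (\ref{cyclotomiceq3}), I would invoke the standard fact that $\Phi_{pn}(x)=\Phi_n(x^p)$ whenever $p$ is a prime dividing $n$ (a one-line check comparing primitive roots of unity, or equivalently coming from the factorization $x^{pn}-1=\prod_{d\mid pn}\Phi_d(x)$). Iterating this identity $a_k-1$ times for each $k=1,\dots,t$ strips the prime $p_k$ from $n$ down to its square-free part $m$, with the net effect of replacing $\nu$ by $\nu^{n/m}$; this yields $\Phi_n(\nu)=\Phi_m(\nu^{n/m})$.

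For (\ref{cyclotomiceq1}), the key input is the factorization $\nu^N-1=\prod_{d\mid N}\Phi_d(\nu)$. Applied with $N=m$ and $N=m/p_k$, it gives
\[
\frac{[m]_\nu}{[m/p_k]_\nu}=\prod_{\substack{d\mid m\\ d\nmid m/p_k}}\Phi_d(\nu)=\prod_{e\mid m/p_k}\Phi_{p_k e}(\nu),
\]
because the divisors of $m$ that fail to divide $m/p_k$ are precisely those divisible by $p_k$. Since the $\Phi_d$'s are pairwise coprime irreducibles in $\mZ[\nu]$, the gcd over $k=1,\dots,t$ of these products equals $\prod_d\Phi_d(\nu)$ where $d$ must, for every $k$, be expressible as $p_k e$ with $e\mid m/p_k$; equivalently, $d\mid m$ and $p_k\mid d$ for all $k$. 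Since $m=p_1\cdots p_t$ is squarefree, the unique such $d$ is $m$ itself, giving $\Phi_m(\nu)$.

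The only conceptual obstacle is the coprimality-of-cyclotomic-polynomials step underpinning the gcd computation in (\ref{cyclotomiceq1}); everything else is manipulation of the geometric series and one invocation of the classical identity $\Phi_{pn}(x)=\Phi_n(x^p)$. I would therefore state, without proof, that distinct $\Phi_d$'s share no common factor in $\mZ[\nu]$ (equivalently in $\mQ[\nu]$, where they are distinct irreducibles), and this makes the reduction in (\ref{cyclotomiceq1}) purely combinatorial.
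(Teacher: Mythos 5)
Your proposal is correct and uses essentially the same ingredients as the paper: identity (\ref{cyclotomiceq2}) is the same geometric-series computation, identity (\ref{cyclotomiceq1}) rests on the same factorization $\nu^m-1=\prod_{d\mid m}\Phi_d(\nu)$ together with pairwise coprimality of the $\Phi_d$'s (which you make a bit more explicit than the paper does), and (\ref{cyclotomiceq3}) is the standard $\Phi_{pn}(x)=\Phi_n(x^p)$ iteration, where the paper simply cites Lang.
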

\begin{proof}
We use the following readily verified formulas
\begin{align*}
\prod_{d|m,d>1}\Phi_d(\nu)&=[m]_{\nu},&\prod_{p_k|d,d|m}\Phi_d(\nu)&=\frac{[m]_{\nu}}{[m/p_k]_{\nu}}.
\end{align*}
They hold because the multiplicative group of $m$th roots of unity is partitioned, by the order of the root of unity, into the divisors $d$ of $m$. The product of all $\nu-q$, where $q$ is a primitive $d$th root of unity, is equal to $\Phi_d(\nu)$. It follows that, if $d\neq m$, then $d$ is  not divisible by at least one of the distinct primes $p_k$, and thus $\Phi_d(\nu)$ does not divide $[m]_\nu/[m/p_k]_\nu$. On the other hand, $\Phi_m(\nu)$ clearly divides each $[m]_\nu/[m/p_k]_\nu$, $k=1,\dots, t$. Hence the greatest common divisor of all polynomials $[m]_\nu/[m/p_k]_\nu$ is precisely $\Phi_m(\nu)$, establishing equation \eqref{cyclotomiceq1}. 

Equation \eqref{cyclotomiceq2} is easy since, for a prime $p$, $\Phi_p(\nu)=(\nu^p-1)/(\nu-1)$, so that
\[
\Phi_{p_k}(\nu^{m/p_k})=\dfrac{\nu^{m}-1}{\nu^{m/p_k}-1}=\dfrac{\frac{\nu^{m}-1}{\nu-1}}{\frac{\nu^{m/p_k}-1}{\nu-1}}=\dfrac{[m]_\nu}{[m/p_k]_\nu}.
\]
The last equation \eqref{cyclotomiceq3} is an exercise in \cite{Lang}*{Chapter IV \S 3}.
\end{proof}

In the following, we denote by $K_0(\slmod{H_n})$ the Grothendieck group of the stable category of $H_n$-modules. Given an object $V$, we denote its class in the Grothendieck group by $[V]$.  Recall that this is the abelian group generated by symbols of isomorphism classes of objects in $\slmod{H_n}$, subject to relations $[U]-[W]+[V]=0$ whenever
\[U\longrightarrow W \longrightarrow V \stackrel{[1]}{\longrightarrow} U[1]\]
is a distinguished triangle.

The monoidal structure of $H_n$ gives $K_0(\slmod{H_n})$ a ring structure, and the $\mZ$-grading shift introduced in Section \ref{Hnmodules} gives it the structure of a left and right $\mZ[\nu,\nu^{-1}]$-algebra, such that the left and right module structure coincide using the natural isomorphism from Lemma \ref{swapiso}.

\begin{lemma}
The Grothendieck group of $\slmod{H_n}$ is isomorphic, as a $\mZ[\nu,\nu^{-1}]$-algebra, to the quotient ring 
$$K_0(\slmod{H_n})\cong \dfrac{\Z[\nu,\nu^{-1}]}{(\prod_{k=1}^t\frac{[n]_\nu}{[n_k]_\nu})}.$$
The tensor product on $\slmod{H_n}$ descends to the multiplication on the Grothendieck group level, while the grading shift functor $\shift{1}$ descends to multiplication by $\nu$.
\end{lemma}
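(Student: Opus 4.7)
The plan is to compute $K_0(\lmod{H_n})$ first as an abelian category invariant, then pass to the stable category by quotienting out the classes of projective-injective modules, and finally identify the resulting ideal using Lemma~\ref{indproj}.

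Since the differentials $\d_k$ generate the nilpotent augmentation ideal of $H_n$, the algebra $H_n$ is graded local, and the trivial module $\Bbbk$ is, up to grading shifts, its unique graded simple module. Each object of $\lmod{H_n}$ thus admits a graded composition series whose subquotients are shifts $\Bbbk\shift{i}$. By the Jordan--H\"older theorem, the classes $\{[\Bbbk\shift{i}]\}_{i\in\mZ}$ form a $\mZ$-basis for $K_0(\lmod{H_n})$, and the assignment $[\Bbbk\shift{i}]\mapsto \nu^i$ defines a group isomorphism $K_0(\lmod{H_n}) \cong \mZ[\nu,\nu^{-1}]$. Because graded dimensions are multiplicative under $\otimes$ and Lemma~\ref{swapiso} realizes the grading shift $\shift{k}$ as tensoring with $\Bbbk\shift{k}$, this upgrades to an isomorphism of $\mZ[\nu,\nu^{-1}]$-algebras.

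Next, the natural functor $\lmod{H_n}\to\slmod{H_n}$ induces a surjective ring map $\pi\colon K_0(\lmod{H_n})\twoheadrightarrow K_0(\slmod{H_n})$ because short exact sequences in $\lmod{H_n}$ descend to standard distinguished triangles in $\slmod{H_n}$ and every distinguished triangle is isomorphic to a standard one. Let $J\subseteq K_0(\lmod{H_n})$ denote the subgroup generated by classes of projective-injective modules; this is a two-sided ideal since tensoring with a projective-injective module yields a projective-injective module by Lemma~\ref{projectivelemma}. Clearly $J\subseteq \ker\pi$ since projective-injectives are zero in $\slmod{H_n}$. To establish the reverse inclusion, I would construct an inverse $K_0(\slmod{H_n})\to K_0(\lmod{H_n})/J$ sending $[M]\mapsto [M]+J$. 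The main point is constancy on isomorphism classes in $\slmod{H_n}$: given an isomorphism $f\colon M\to N$ in $\slmod{H_n}$, the pair $(f,\rho_M)\colon M\to N\oplus (M\otimes H_n\shift{\l})$ is a monomorphism in $\lmod{H_n}$ whose cokernel $C$ becomes zero in $\slmod{H_n}$; such a $C$ is forced to be projective-injective (its identity factors through a projective-injective, exhibiting it as a direct summand of one), so $[M]-[N]\in J$. Additivity on distinguished triangles reduces to additivity on the short exact sequences that produce them.

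Finally, Lemma~\ref{indproj} identifies every indecomposable projective-injective as a grading shift of $H_n$, whose class in $K_0(\lmod{H_n})\cong \mZ[\nu,\nu^{-1}]$ corresponds to its graded dimension $\prod_{k=1}^t [n]_\nu/[n_k]_\nu$ (up to a unit coming from the sign convention for $\shift{k}$, which does not affect the generated ideal). Hence $J$ is the principal ideal generated by this polynomial, yielding the desired isomorphism
\[
K_0(\slmod{H_n})\cong \mZ[\nu,\nu^{-1}]\bigg/\left(\prod_{k=1}^t [n]_\nu/[n_k]_\nu\right)
\]
of $\mZ[\nu,\nu^{-1}]$-algebras. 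The main technical obstacle is the well-definedness step in the middle paragraph, where one must control the passage from stable isomorphism classes back to $K_0(\lmod{H_n})/J$; once that is in place, the rest is elementary bookkeeping with graded dimensions and Lemma~\ref{indproj}.
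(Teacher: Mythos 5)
Your proof is correct and follows essentially the same strategy as the paper: identify $K_0(\lmod{H_n})\cong \mZ[\nu,\nu^{-1}]$, observe that passing to the stable category imposes only the relation $[H_n]=0$, and conclude via Lemma~\ref{indproj}. Your middle paragraph carefully justifies that the kernel of $K_0(\lmod{H_n})\twoheadrightarrow K_0(\slmod{H_n})$ is exactly the ideal generated by projective-injective classes, a well-definedness point which the paper's terser proof leaves implicit.
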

\begin{proof}
The Grothendieck ring $K_0(\slmod{H_n})$ is generated, as a $\mZ[\nu,\nu^{-1}]$-module, by the class of the only simple $H_n$-module $\Bbbk$, which is one-dimensional. The only relations imposed on the symbol of the simple module arise from graded dimensions of projective-injective $H_n$-modules. The result thus follows from Lemma \ref{indproj}.
 \end{proof}

In contrast, the Verdier quotient category $\cO_n$ categorifies the cyclotomic ring $\mathbb{O}_n$.

\begin{theorem}\label{thm-main}
The Grothendieck ring  of $\cO_n$ is isomorphic to the ring of cyclotomic integers 
$$K_0(\cO_n)\cong \dfrac{\Z[\nu,\nu^{-1}]}{(\Phi_n(\nu))}.$$
\end{theorem}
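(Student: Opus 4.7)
The plan is to exploit the standard localization exact sequence for Grothendieck groups of Verdier quotients,
\begin{equation*}
K_0(\underline{\bfI})\longrightarrow K_0(\slmod{H_n})\longrightarrow K_0(\cO_n)\longrightarrow 0,
\end{equation*}
together with the identification $K_0(\slmod{H_n})\cong \mZ[\nu,\nu^{-1}]/(\prod_k g_k)$ from the preceding lemma, where $g_k:=[V_k]=[n]_\nu/[n_k]_\nu=\Phi_{p_k}(\nu^{n_k})$ is the graded dimension of $V_k$. The first step is to identify the image of $K_0(\underline{\bfI})$: by Definitions \ref{defnI} and \ref{bfIk-def}, every object of $\bfI$ admits a finite filtration whose subquotients are grading shifts of some $V_{k_i}$, so this image is exactly the $\mZ[\nu,\nu^{-1}]$-submodule generated by the classes $[V_k]=g_k$. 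This yields
\begin{equation*}
K_0(\cO_n)\cong \mZ[\nu,\nu^{-1}]/(g_1,\ldots, g_t),
\end{equation*}
and reduces the theorem to the purely algebraic identity $(g_1,\ldots, g_t)=(\Phi_n(\nu))$ in $\mZ[\nu,\nu^{-1}]$.

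The inclusion $(g_1,\ldots, g_t)\subseteq (\Phi_n(\nu))$ is immediate: substituting $\nu\mapsto \nu^{n/m}$ into Lemma \ref{cyclotomiclemma}(\ref{cyclotomiceq1}) and using equations (\ref{cyclotomiceq2}) and (\ref{cyclotomiceq3}) identifies $\Phi_n(\nu)$ with $\gcd(g_1,\ldots, g_t)$ in $\mZ[\nu]$, and since $\Phi_n(\nu)$ is monic with integer coefficients, polynomial long division gives $\Phi_n(\nu)\mid g_k$ in $\mZ[\nu]$ for each $k$.

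The main obstacle is the reverse inclusion $\Phi_n(\nu)\in (g_1,\ldots, g_t)$: over $\mQ[\nu]$ this follows from B\'ezout and the gcd identity, but over $\mZ[\nu]$ (which is not a PID) it requires a finer analysis. My approach is to set $h_k:=g_k/\Phi_n(\nu)=\prod_{d\mid n,\, v_{p_k}(d)=a_k,\, d\neq n}\Phi_d(\nu)$ and show that $(h_1,\ldots, h_t)=\mZ[\nu]$; multiplying any resulting B\'ezout identity $1=\sum_k b_k(\nu)h_k(\nu)$ by $\Phi_n(\nu)$ then produces the desired expression $\Phi_n=\sum_k b_k g_k$. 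To establish coprimality of the $h_k$, it suffices to verify that they share no common root in $\overline{\mF_p}$ for any prime $p$. In characteristic zero (and for $p\nmid n$), a hypothetical common primitive $d$th root of unity would force $v_{p_k}(d)=a_k$ for every $k$, whence $d=n$, contradicting $d\neq n$. For $p=p_j\mid n$, one applies the congruence $\Phi_d(\nu)\equiv \Phi_{d'}(\nu)^{\phi(p_j^b)}\pmod{p_j}$ for $d=p_j^b d'$ with $p_j\nmid d'$ to rewrite each $h_k\bmod p_j$ as a power product of $p_j$-coprime cyclotomics; an analogous index-theoretic argument, now performed on divisors of $n/p_j^{a_j}$, forces any hypothetical common $p_j$-coprime index to equal $n/p_j^{a_j}$ itself, which is excluded by the condition $d\neq n$. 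With this arithmetic input in place, the two inclusions combine to give the desired isomorphism $K_0(\cO_n)\cong \mZ[\nu,\nu^{-1}]/(\Phi_n(\nu))\cong \mO_n$.
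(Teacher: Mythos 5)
Your proof follows the same structural outline as the paper's --- the localization exact sequence $K_0(\underline{\bfI})\to K_0(\slmod{H_n})\to K_0(\cO_n)\to 0$, the identification of the image of $K_0(\underline{\bfI})$ with the ideal $(g_1,\dots,g_t)$ via the filtration conditions, and Lemma~\ref{cyclotomiclemma} to relate the $g_k$ to $\Phi_n(\nu)$ --- but you have correctly noticed and repaired a step the paper leaves unjustified. The paper concludes from $\Phi_n(\nu)=\gcd(g_1,\dots,g_t)$ that ``$\Phi_n(\nu)$ generates $I$,'' but over $\mZ[\nu,\nu^{-1}]$ the gcd of a family need not generate the ideal they generate (e.g.\ $(\nu-1,\nu+1)=(2,\nu-1)\neq(1)=(\gcd)$), so the inclusion $\Phi_n(\nu)\in(g_1,\dots,g_t)$ requires an argument. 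Your reduction to coprimality of the cofactors $h_k=g_k/\Phi_n(\nu)=\prod_{d\mid n,\,v_{p_k}(d)=a_k,\,d\neq n}\Phi_d(\nu)$ is the right move, and your verification that they admit no common zero in any $\overline{\mF_p}$ is correct: every maximal ideal of $\mZ[\nu]$ has positive residue characteristic, so no-common-root over every $\overline{\mF_p}$ does imply $(h_1,\dots,h_t)=\mZ[\nu]$; for $p\nmid n$ the order of a common root $\zeta$ would have $p_k$-adic valuation $a_k$ for every $k$, forcing order $n$, which is excluded; and for $p=p_j\mid n$ your use of $\Phi_{p_j^b d'}\equiv\Phi_{d'}^{\phi(p_j^b)}\ (\mathrm{mod}\ p_j)$ reduces to the same valuation bookkeeping on the $p_j$-free part of the index, again forced to equal $n/p_j^{a_j}$ and then excluded. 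In short, your write-up is a complete proof, whereas the paper's, read literally, has a gap exactly where you flagged one; the extra cost is the page or so of elementary arithmetic over residue fields, which I think is worth spelling out since it is precisely the point where the general-$n$ case is genuinely harder than the prime-power case.
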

\begin{proof}
We have an exact sequence of triangulated categories
$$\un{\bfI}\hookrightarrow \slmod{H_n}\twoheadrightarrow \cO_n,$$
where the first containment is fully faithful and idempotent complete (Lemma \ref{lemma-thickness}). It follows from well-known facts on $K$-theory of exact sequence of triangulated categories that
$$K_0(\cO_n)=K_0(\slmod{H_n}/\un{\bfI})=K_0(\slmod{H_n})/K_0(\un{\bfI})$$
(see, for instance, \cite{Schl}*{3.1.6}).
We will determine the image $I:=K_0(\un{\bfI})$ in $K_0(\slmod{H_n})$. Note that $I$ is an ideal in the ring $K_0(\slmod{H_n})$ by Lemma \ref{lem-triang-structure}, generated by the classes $\Gro{V}$ for all objects $V$ in $\un{\bfI}$. 

Let $\nu$ be the formal variable representing the image in $K_0(\cO_n)$ of the object $\Bbbk\shift{1}$ of $\mathcal{O}_n$. Write $\mu:=\nu^{n/m}$ and $\mu_k:=\nu^{n/p_k}$.

By definition, any object $U\in\underline{\bfI}$ is isomorphic to a module $U'\in \lmod{H_n}$ 
such that $U'\cong \oplus_{k=1}^t U_k$, where $U_k$ is an object in $\bfI_k$. Hence, $\Gro{U}=\sum_{k=1}^t\Gro{U_k}$. However, any object in $\bfI_k$ is, in particular, a free $\widehat{H}_n^k$-module. %Thus, $\Gro{U_k}$ is a $\mZ[\nu,\nu^{-1}]$-multiple of $\Gro{W_k}=\prod_{l\neq k}[m]_\mu/[m/p_l]_\mu$.
Therefore, in $K_0(\slmod{H_n})$, we have that $\Gro{U_k}$ is a $\mZ[\nu,\nu^{-1}]$-multiple of $\Gro{W_k}$. In the presence of at least two distinct prime factors $p_k,p_l$, $\Gro{V_l}$ divides $\Gro{W_k}$. Hence, the symbol $\Gro{U}$ of any object of $U$ in $\underline{\bfI}$ is
a $\mZ[\nu,\nu^{-1}]$-linear combination of the cyclotomic polynomials
\begin{align*}
\Phi_{p_k}(\mu_k)&=\Gro{V_k}=1+\mu_k+\ldots+\mu_k^{p_k-1}, &\text{for }k=1,\ldots,t.
\end{align*}

Conversely, the relations $$\Gro{W_k}=\prod_{l\neq k}[m]_\mu/[m/p_l]_\mu=0$$ hold  in $K_0(\cO_n),$ using equation \eqref{cyclotomiceq2} of Lemma \ref{cyclotomiclemma}. Therefore, the relations
$$\gcd \left\lbrace [W_l]~\middle\vert~ l=1,\ldots, t \text{ such that } l \neq k \right\rbrace=\Gro{V_k}=\frac{[m]_\mu}{[m/p_k]_\mu}=1+\mu_k+\ldots+\mu_k^{p_k-1}=0$$
are satisfied in $K_0(\cO_n)$ and generate the ideal $I$. 
Now, by equations \eqref{cyclotomiceq1} and \eqref{cyclotomiceq3}, we see that
$$\Phi_n(\nu)=\Phi_{m}(\mu)=\gcd\left( [m]_\mu/[m/p_1]_\mu,\ldots ,[m]_\mu/[m/p_t]_\mu \right)$$ generates $I$. The result follows.
\end{proof}

\begin{remark}
The theorem can be summarized as saying that the tensor triangulated category $\mathcal{O}_n$ categorifies the cyclotomic ring of integers $\mathbb{O}_n$. Choose an embedding of $\mathbb{O}_n$ in $\mathbb{C}$. The tensor product on $\mathcal{O}_n$ descends to the product of cyclotomic integers. Furthermore, the vector space dual functor $(\mbox{-})^*:\mathcal{O}_n\rightarrow \mathcal{O}_n$ decategorifies to the complex conjugation map $[M^*]=\overline{[M]}$. It also follows that the inner hom measures the complex norm of the symbols  
$$[\Hom^\bullet(M,M)]=[M^*\otimes M]=\overline{[M]}[M]=|[M]|^2.$$
\end{remark}

%%%%%%%%%%%%%%%%%%%%%%%%%%%%%%%%%%%%%%%%%%%%%%%%%%%%%%%%%%%%%%%%%%%%%%%%%%%%%%%%%%%%%%%%%%%%%%%%%%%%%%%%%%

%%%%%%%%%%%%%%%%%%%%%%%%%%%%%%%%%%%%%%%%%%%%%%%%%%%%%%%%%%%%%%%%%%%%%%%%%%%%%%%%%%%%%%%%%%%%%%%%%%%%%%%%%%

%\addcontentsline{toc}{section}{References}
%\nocite{*}
\bibliography{biblio}
\bibliographystyle{amsrefs}%agsm or dcu

\end{document}